\newtheorem{remark}{Remark}
\numberwithin{equation}{section}
\newenvironment{nouppercase}{%
  \renewcommand{\uppercasenonmath}[1]{}}{}
\newtheorem{theorem}{Theorem}[section]
\newtheorem{proposition}{Proposition}[section]
\newtheorem{lemma}[theorem]{Lemma}
\newtheorem{corollary}[theorem]{Corollary}
\newtheorem{claim}[theorem]{Claim}
\newtheorem{sublemma}[theorem]{Sublemma}
\newtheorem{definition}[theorem]{Definition}
\newtheorem{qn}[theorem]{Question}
\theoremstyle{remark}
\title{On the Piecewise Linear Perturbations of the Doubling Map}
\begin{document}

\author{Anubrato Bhattacharyya}
\address{Department of Mathematics, Presidency University, 86/1, College Street, Kolkata -
700073, West Bengal, India.}
\email{anubrato02@gmail.com}

\thanks{The research of first author was funded by UGC [NTA Ref. No. 201610319430], Govt. of India. The second author was supported  partly by the Department of Science and Technology (DST), Govt. of India, under the Scheme DST FIST [File No. SR/FST/MS-I/2019/41] and NBHM travel grant 0207/15(3)/2023/R\&D-II/11964.}

\author{Kuntal Banerjee}
\address{Department of Mathematics, Presidency University, 86/1, College Street, Kolkata -
700073, West Bengal, India.}

\email{kbanerjee.maths@presiuniv.ac.in}

\thanks{Corresponding author: First author}

\keywords
{circle maps\\2020 Mathematics Subject Classification: 37E10, 37D05, 37B55, 37G15, 37G35, 37C70}

\begin{abstract}
Inspired by the 2007 work by  M.~Misiurewicz and A.~Rodrigues \cite{MR}, we consider a family of circle maps that are perturbations of the doubling map on the circle by a piecewise linear map.  We call this the  \textit{piecewise linear perturbation of the doubling map} (PLPDM) and it is given by the formula,
        $f_{a,b}(x)= \displaystyle \bigl(2x+a+\dfrac{b}{2} S(x) \bigr) \sslash 1 \quad {\text {for }} x, a, b \in [0,1] $, where  $y \sslash 1$ means $y \mod 1$ (or simply, the fractional part of $y$) and $S(x)$ is the piecewise linear approximation of $\sin 2\pi(x-1/4)$. The map $S(x)$ is called the straight sine map. Define the hyperbolic set, $\mathcal{H}= \{ (a,b) \in \mathbb{R}/\mathbb{Z} \times [0,1] : f_{a,b} \text{ has an attracting cycle} \}$. Tongues are defined as the components of $\mathcal{H}$
 that touch the ceiling $\{b=1\}$ in a non degenerate interval. Any other component is referred to as an Eye. We show the uniqueness of the attracting cycle of $f_{a,b}$ for  $(a,b) \in \mathcal{H}$. We then define \textit{type} and prove the existence of the tongues of all types. We also show how combinatorics of the attracting orbit determines if the component is a tongue or an eye. We show that $f_{a,b}$ is conjugate to the doubling map if $(a,b) \notin \overline{\mathcal{H}}$. Some experimental proof of the existence of eyes in the parameter space corresponding to different combinatorics will be shown.
\end{abstract}

\begin{nouppercase}
\maketitle
\end{nouppercase}

\section{Introduction}

In 2007,  M.~Misiurewicz and A.~Rodrigues studied the following family $D_{a,b}:\mathbb T \to \mathbb T$ of maps on circle $\mathbb R/\mathbb Z=\mathbb {T}$. These are called the Double Standard Maps (DSM) in their work (\cite{MR}) and they are defined as follows,
   \begin{align*}
    D_{a,b}(x)=\left(2x+a+\dfrac{b}{\pi} \sin(2\pi x)\right)\sslash 1, \quad {\text {for }} x\in \mathbb T=\mathbb R/\mathbb Z, a \in \mathbb{R}, 0 \leq b \leq 1
\end{align*}
Here $y\sslash1$ means $y \mod 1$.  

Recall the doubling map on the circle
\begin{align*}
    D(x)= 2x\sslash1 \quad {\text {for }} x\in \mathbb T.
\end{align*}
The maps in DSM family can be seen as analytic perturbations of $D$, parameterized by $a,b$ where the parameter space is,  $\mathcal P_{\textrm{DSM}}=\{(a,b):a\in \mathbb R, 0\le b\le 1\}$.

The map $D_{a,b}$ is also  semiconjugate to $D$, for any $(a,b)\in \mathcal P$. Misiurewicz and  Rodrigues prove the existence of atmost one attracting cycle for any  parameter $(a,b) \in \mathcal{P}_{\text{DSM}}$ (Theorem 3.5, \cite{MR}). The maps in DSM family can also be seen as a hybrid between the family of Standard Maps and expanding circle maps. The maps in the Standard Family  have the same formula but with out the $2$ multiplied, as written below
\begin{align*}
    A_{a,b}(x)=\left(x+a+\dfrac{b}{2\pi} \sin(2\pi x)\right)\sslash 1, \quad{\text {for }} x\in \mathbb T, a \in \mathbb{R} ,0 \leq b < 1.
\end{align*}

Tongues in DSM family are subsets of the parameter space such that the corresponding map for any parameter value within a tongue has an attracting periodic cycle. 
The fact, that there  cannot be more than one attracting cycle for maps in the DSM family, follows from studying the complexified map. This complexification $g_{a,b}$ of $D_{a,b}$ extends naturally to a holomorphic self map of the punctured complex plane as written presently
    \begin{align*}
    g_{a,b}: \mathbb{C}^* \to \mathbb{C}^*,\quad  g_{a,b}(z)=e^{2\pi i a}z^2\exp\left( bz-\dfrac{b}{z}  \right).
\end{align*} 

The uniqueness of the attracting cycle for maps in  DSM family is fundamental to the work of the authors in \cite{MR}. The proof of uniqueness  crucially  uses the holomorphic extension of the complexification. Now if one considers a perturbation of the doubling map that is not analytic unlike in DSM, the complexification which is defined on unit  circle may not extend holomorphically to the punctured complex plane. One may then ask if there is at most one attracting cycle when the perturbation is non analytic. This curiosity can be considered as the starting point of our paper. The perturbation that we consider here is probably the simplest non-smooth example possible, namely a piecewise linear approximation of $\sin 2\pi(x-1/4)$. We call this family as the  \textit{piecewise linear perturbations of the doubling map} (PLPDM) as is defined  below. We will often identify $\mathbb{T}$ with $I=[0,1]/_{0\sim1}$  and use interchangeably.
        \[
    f_{a,b}(x)= \left(2x+a+\dfrac{b}{2} S(x) \right)\sslash1 \quad {\text {for }} x\in I \] where $S$ is called the straight sine map and $S$ is defined in the following manner,
\[S(x)= \begin{cases}
              4x-1 &\text{  if $x \in [0,\frac{1}{2}]$,}\\
              -4x+3 & \text{  if $x \in [\frac{1}{2},1)$.}\end{cases}\] 

\begin{figure}
    \centering
    \includegraphics[scale=.2]{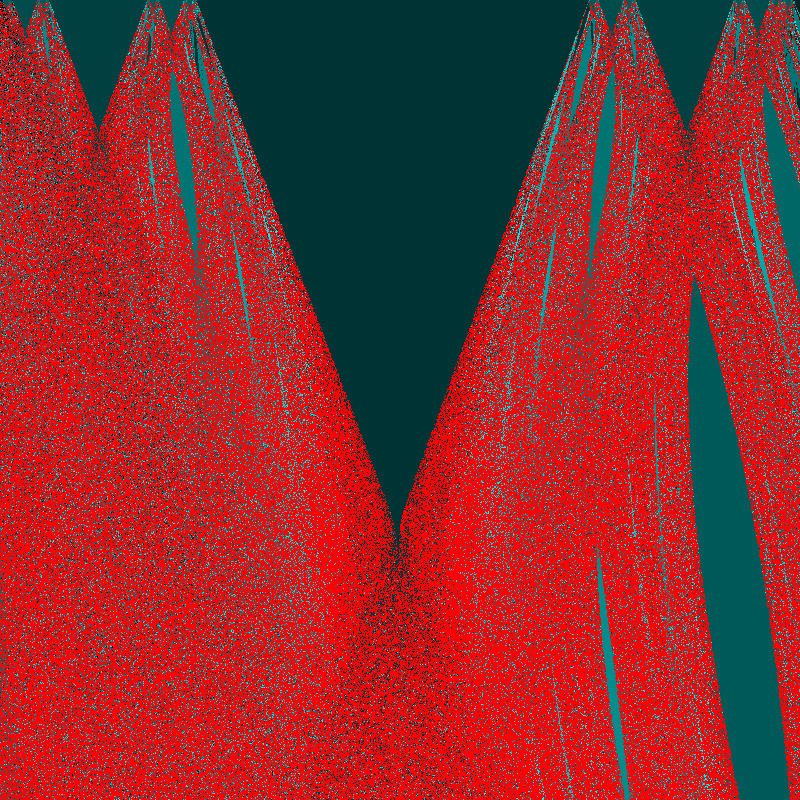}
    \caption{\small Tongues and Eyes in PLPDM (By A.~Dezotti)}
    
\end{figure}

The parameter space for this family is $\mathcal P=\{(a,b):a\in \mathbb R/\mathbb{Z}, 0\le b\le 1\}$.

Every map in this family is a degree two piecewise linear map which is expanding on $(0,1/2)$ and contracting on $(1/2,1)$. Moreover, each map has two points of non differentiability at 0 and 1/2. These together pose a challenge in studying the dynamical behaviour of the maps in this family. 

Upon experimentation with PLPDM family, a new phenomenon becomes apparent in the parameter space. Apart from tongues, we observe new kind of shapes in the parameter space of PLPDM, which we are going to refer to as ``eyes". These shapes are absent for both the standard family of circle maps and the DSM family.  We will prove   in section \ref{eye} the following basic observation from the plotted images:
\begin{quote}
 \textit{Eye} is a connected  region that is a positive distance away from  $b=1$.  
\end{quote} 
\section{Preliminaries and main results}

First, we set up our notation. We denote, $I=[0,1], I_+= [ 0, \frac{1}{2} ]\text{ and, }I_-=[ \frac{1}{2},1].$ We may often use $\mathbb{R}/\mathbb{Z}=\mathbb{T}$ interchangeably with $I/\sim$ where, for $x,y \in I$, $x\sim y$ if and only if $x-y=1$. The length of an interval $J$ within the real line or the circle is denoted by $|J|$.

 By opening up the formula we have,
\begin{align*}
         f_{a,b}: \mathbb{R/Z} \to \mathbb{R/ Z} 
     \end{align*}
    \begin{align*}
      f_{a,b}(x)=(B^+x+A^+)\sslash1,x \in I_+,
\end{align*}
\begin{align*}
     f_{a,b}(x)=(B^-x+A^-)\sslash1,x \in I_-,
\end{align*}
\begin{align*}
    B^\pm = 2(1 \pm b),A^+ = (a-b/2),A^-=(a+3b/2).
\end{align*}

This means that for inputs $a,b,x \in \mathbb{R/Z}$, all the additions and multiplications are performed by thinking of $\mathbb{R/Z}$ as the  quotient ring.

\vspace{.25cm}

We define the itinerary of a point $x$ as a sequence of $+$ and $-$.  We say $x$ has itinerary,  $\sigma=(\sigma_1,\sigma_2,...,\sigma_n,...) $, where $\sigma_n=\pm $, accordingly as $f_{a,b}^n(x)\in I_{\pm}$. This sequence becomes periodic for a periodic point. 

\begin{definition}[Itinerary]
 The slope itinerary of an orbit $x_0, x_1, \ldots$  
 
 is the
 sequence $\sigma_j$ of + or - such that
 $x_j \in  I_{\sigma_j} $.  For a periodic orbit of period $p$ (for some $p \geq 1$), the slope itinerary is reduced to a
finite sequence $\sigma_0,\ldots,\sigma_{p-1}$. Such a sequence is called a periodic slope itinerary  (an attracting periodic orbit must have at least one $-$ in its itinerary). We may just use `itinerary' instead of `periodic slope itinerary' if there is no chance of ambiguity.  \\
 \indent
    Let $p \geq 1$, $\sigma \in \{-,+\}^p $. A parameter $(a,b)$ has slope itinerary
$\sigma$ if $f_{a,b}$ has a periodic orbit of period $p$ and periodic slope itinerary $\sigma$. If a periodic slope itinerary $\sigma$ has more than one minus then it is called non single minus periodic slope itinerary. We only consider itinerary of a point whose orbit avoids $0$ and $\frac{1}{2}$. 
\end{definition}

\begin{remark}
    If  $f_{a,b}$ has a periodic  cycle $x_0, x_1,\ldots, x_{p-1}$   of period $p$ with itinerary $\sigma=(\sigma_0,\ldots,\sigma_{p-1})$, then the slope of $f_{a,b}^p$ at $x_0$ is equal to $B^{-^{m}}B^{+{n}}$ where $m,n$ are numbers of $-$ and $+$, respectively in $\sigma$.
\end{remark}

\begin{definition}[Tongue and Eye]
     Let $\sigma$ be a periodic slope itinerary and let S be a connected component of the set of parameters corresponding to attracting cycles with periodic slope itinerary $\sigma$. The set S is called a
tongue if $\overline{S}$ intersects $\mathbb{T}\times \{1\}$ on a nontrivial interval. Otherwise $S$ is called an
eye.
\end{definition}

Notice that tongues and eyes are the connected components of the hyperbolic set $\mathcal{H}$ defined presently,
\[\mathcal{H}= \{ (a,b) \in \mathbb R/\mathbb{Z} \times [0,1) : f_{a,b} \text{ has an attracting cycle} \}.\] 

\begin{definition}[Neutral Cycle]\label{neutral cycle}
   Let $ g: \mathbb{R/Z} \to \mathbb{R/ Z} $. A periodic point $x$ of $g$ with period $n$ is called a neutral periodic point if $g^n$ is neither attracting nor repelling fixed point of $g^n$. The periodic orbit is called a neutral cycle or neutral periodic orbit.

\end{definition}

\subsection{Main Results}
\vspace{.3cm}
In this paper we have  tried to produce  a basic picture of the parameter space in the spirit of \cite{MR}. For instance, similar to the Double Standard Family we also show via Theorem \ref{uniqueness of periodic attractor}, the uniqueness of an attracting periodic cycle. Theorem \ref{imlicit} and its corollary tell us that the tongues and eyes are all open and so is the whole hyperbolic set. This combined with the uniqueness of periodic attractors let us define the notion of \textit{type} of a tongue, similar to  \cite{MR}. The following theorems \ref{tongue esist}, \ref{eye combinatorics}  demonstrate that the combinatorics of the periodic cycle in fact dictates if the hyperbolic component is a tongue or an eye.  A single minus itinerary corresponds to a tongue while the non single minus corresponds to an eye. Presence of the eyes  truly  separates the 
 parameter space in PLPDM from the case of  the  Double Standard Family. Theorem \ref{tongue esist} in fact tells us that tongues exist for all single minus itinerary while Theorem \ref{for all types} states that there are tongues for all \textit{types}. 

\begin{theorem}\label{tongue esist}
    Let $\sigma$ be a single minus periodic slope itinerary. Then the set of parameters $(a,b) \in \mathcal{P}$ with slope itinerary $\sigma$ is non empty. Further, any connected component of  the set of parameters with slope itinerary $\sigma$ is a tongue.
\end{theorem}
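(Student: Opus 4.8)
The plan is to pivot everything around the ceiling $\{b=1\}$, where $f_{a,b}$ degenerates to a map with a flat branch, and then to transport the picture down in $b$. After a cyclic rotation I may assume $\sigma=(-,+,\dots,+)$ has period $p$ with its single $-$ in position $0$. For any parameter carrying a period-$p$ orbit with itinerary $\sigma$ the multiplier of the cycle is $\lambda(b):=B^{-}(B^{+})^{p-1}=2(1-b)\bigl(2(1+b)\bigr)^{p-1}$, which depends on $b$ alone and decreases strictly on $[0,1]$ from $2^{p}$ to $0$; so there is a unique $b_{*}=b_{*}(p)\in(0,1)$ with $\lambda(b_{*})=1$, and the cycle is attracting exactly when $b_{*}<b\le1$. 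Thus the set $\mathcal R_{\sigma}$ of parameters with an attracting $\sigma$-cycle lies in $\{b_{*}<b<1\}$; by Theorem~\ref{uniqueness of periodic attractor} that cycle is unique, by Theorem~\ref{imlicit} it varies real-analytically, and solving the affine return relations $x_{1}=B^{-}x_{0}+A^{-}$, $x_{j+1}=B^{+}x_{j}+A^{+}$ $(1\le j\le p-1)$, $x_{p}=x_{0}$ gives explicit real-analytic orbit functions $x_{0},\dots,x_{p-1}$ on $\{b_{*}<b\}$, so that $\mathcal R_{\sigma}=\{(a,b):b_{*}<b<1,\ x_{0}\in(\tfrac12,1),\ x_{j}\in(0,\tfrac12)\ (1\le j\le p-1)\}$, an open set on which the $x_{j}$ extend past $b=1$.

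For non-emptiness I would look at $b=1$: there $B^{-}=0$, $B^{+}=4$ and $A^{+}\equiv a+\tfrac12\pmod1$, so $f_{a,1}$ equals the constant $c:=(a+\tfrac12)\sslash1$ on $I_{-}$ and equals $x\mapsto(4x+c)\sslash1$ on $I_{+}$; iterating gives $x_{j}(a,1)=(N_{j}c)\sslash1$ for $1\le j\le p-1$ and $x_{0}(a,1)=(N_{p}c)\sslash1$, where $N_{k}:=(4^{k}-1)/3\in\Z$. Hence $(a,1)$ is a ceiling-limit of $\mathcal R_{\sigma}$ with itinerary $\sigma$ precisely when $c(a)$ lies in the open set $\mathcal C_{\sigma}:=\{c:\ (N_{k}c)\sslash1\in(0,\tfrac12)\ (1\le k\le p-1),\ (N_{p}c)\sslash1\in(\tfrac12,1)\}$, and one checks $\bigl(\tfrac1{2N_{p}},\tfrac1{N_{p}}\bigr)\subseteq\mathcal C_{\sigma}$ (for such $c$, $N_{k}c\le N_{p-1}c<N_{p-1}/N_{p}<\tfrac14$ while $N_{p}c\in(\tfrac12,1)$), so $\mathcal C_{\sigma}\ne\emptyset$. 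Since $a\mapsto c(a)$ is a homeomorphism of $\T$, the inequalities cutting out $\mathcal R_{\sigma}$ are open, and $\lambda(1)=0<1$, for each $a$ with $c(a)\in\mathcal C_{\sigma}$ a short segment $\{a\}\times(1-\varepsilon,1)$ lies in $\mathcal R_{\sigma}$; in particular $\mathcal R_{\sigma}\ne\emptyset$, and over a compact sub-arc $J$ of a component of $\mathcal C_{\sigma}$ a whole connected box $\{(a,b):c(a)\in J,\ 1-\varepsilon<b<1\}\subseteq\mathcal R_{\sigma}$, whose closure meets $\{b=1\}$ in the nondegenerate arc $\{c(a)\in J\}\times\{1\}$ — so the component through it is a tongue.

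For the second assertion let $S$ be any component of $\mathcal R_{\sigma}$; I would prove $\sup_{S}b=1$. A key point is that, from the explicit formulas, $\partial x_{j}/\partial a=m_{j}(b)>0$ for every $j$ on $\{b_{*}<b\}$ (a sum of positive terms, using $1-\lambda(b)>0$), so each boundary hypersurface $\{x_{j}=0\}$ or $\{x_{j}=\tfrac12\}$ is locally a graph $a=(\text{function of }b)$. Suppose $b^{\#}:=\sup_{S}b<1$ and take $(a^{\#},b^{\#})\in\overline S$; then $b^{\#}>b_{*}$, so the cycle persists there, and $(a^{\#},b^{\#})\notin\mathcal R_{\sigma}$ (otherwise $\mathcal R_{\sigma}$, being open, would contain a ball that lies in $S$ and reaches above $b^{\#}$). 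Hence some orbit point sits at a corner $0$ or $\tfrac12$. If exactly one does, then near $(a^{\#},b^{\#})$ only that one constraint is active, its zero set is a graph over $b$, and the region it bounds extends to $b>b^{\#}$ while staying connected to $S$ — a contradiction. So exactly two orbit points are at corners, one at $0$ and one at $\tfrac12$ (there are only these two corners and distinct orbit points are distinct).

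The hard part will be the last step: showing that, for a single-minus $\sigma$ and $b\in(b_{*},1)$, a period-$p$ cycle cannot meet both corners $0$ and $\tfrac12$. I would establish that such a cycle is necessarily non-attracting, i.e. $\lambda(b)\ge1$: substituting $f_{a,b}(\tfrac12)=(a+\tfrac b2)\sslash1$ and $f_{a,b}(0)=(a-\tfrac b2)\sslash1$ into the single-minus return map (one $B^{-}$ factor, $p-1$ $B^{+}$ factors) yields a system for $(a,b)$ whose solutions all have $\lambda(b)\ge1$ — equivalently, such a configuration lies on the boundary of a \emph{non}-single-minus hyperbolic component, which is the content of Theorem~\ref{eye combinatorics} (for $p=2$ this is immediate: the only such cycle is at $b=\tfrac12<b_{*}(2)=\tfrac{\sqrt3}2$). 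Granting this, $b^{\#}=1$; and at a ceiling-limit point of $S$ either no constraint is active (the box argument of the second paragraph gives a nondegenerate interval in $\overline S\cap(\T\times\{1\})$) or exactly one is — two cannot, since at $b=1$ both corners map to $c$, which cannot have two predecessors in a cycle — in which case $\mathcal R_{\sigma}$ near that point is $\{a\gtrless(\text{graph})\}\cap\{b<1\}$ and its closure again meets $\{b=1\}$ in a nondegenerate arc. In every case $\overline S\cap(\T\times\{1\})$ is a nondegenerate interval, so $S$ is a tongue; all the genuine difficulty is concentrated in the no-double-corner statement for single-minus cycles, the rest being bookkeeping with the affine return map.
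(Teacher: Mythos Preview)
Your approach is genuinely different from the paper's. The paper proves non-emptiness constructively: it works with the lift $F_{a,b}$, fixes $x\in I_-$, and exploits that $a\mapsto F_{a,b}^j(x)$ is affine with slope $B^{+^{j-1}}+\cdots+B^{+}+1$ to build a nested sequence of $a$-intervals $J_1\supset J_2\supset\cdots\supset J_{p-1}$ on which the itinerary is forced; the final interval is long enough that the return graph must cross a translate of the diagonal (their Lemma~\ref{int line}). The tongue conclusion is then drawn from the fact that these intervals $I_b$ vary continuously in $b$ and widen as $b\to1$. Your route---solving the affine return exactly, reading off the $b=1$ limit where $B^-=0$, and producing the explicit window $c\in(1/(2N_p),1/N_p)$---is cleaner for non-emptiness and gives the ceiling interval directly rather than through an inductive estimate. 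One small correction: $\lambda(b)=2^p(1-b)(1+b)^{p-1}$ is \emph{not} strictly decreasing on $[0,1]$ for $p\ge3$ (it increases near $b=0$ since $\lambda'(0)=2^p(p-2)>0$); however your conclusion survives, because $\lambda$ is unimodal with maximum $\ge 2^p$, so $\lambda(b)=1$ still has a unique root $b_*$ and $\lambda<1$ iff $b>b_*$.

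The real gap is in your argument that \emph{every} component is a tongue. Your reduction to the ``no-double-corner'' claim is reasonable, but you do not prove it, and the appeal to Theorem~\ref{eye combinatorics} is not a valid reduction: that theorem says non-single-minus components are eyes, which tells you nothing about a single-minus cycle that happens to pass through both $0$ and $\tfrac12$. At such a parameter the cycle is still single-minus (the multiplier is still $\lambda(b)<1$ since $b>b_*$), so there is no non-single-minus attracting component whose boundary you are on; you would be on the common boundary of two single-minus regions with \emph{different} integer offsets in the affine return, and nothing you have written forces $\lambda\ge1$ there. Your own closing sentence concedes this. To close the gap you would need an independent argument---for instance, analysing the two boundary curves $\{x_i=0\}$ and $\{x_j=\tfrac12\}$ directly and showing they cannot meet transversally inside $\{b_*<b<1\}$ in a way that caps $S$ from above, or showing that the slice $\mathcal R_\sigma\cap\{b=\text{const}\}$ is an interval so there is only one component. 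The paper's treatment of this second assertion is itself brief (it argues via continuous motion of the constructed intervals $I_b$), so your more structural viewpoint could be valuable once the double-corner obstruction is actually handled.
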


\begin{theorem}\label{eye combinatorics}
    Let $\sigma$ be a non single minus periodic slope itinerary of length $n$ and let $H$ be a non empty connected component of the set of parameters having periodic slope itinerary $\sigma$.
 
Then $H$ is an eye. 
\end{theorem}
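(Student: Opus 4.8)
\emph{Overview.} Write $m\ (\ge 2)$ for the number of minuses in $\sigma$. Any parameter with attracting periodic slope itinerary $\sigma$ lies in $\mathcal H$, hence has $b<1$, and by the Remark the multiplier of its attracting $n$-cycle is $\mu:=(2(1-b))^{m}(2(1+b))^{n-m}\in(0,1)$. Let $U_\sigma$ be the set of all such parameters; since $\overline H\subseteq\overline{U_\sigma}$, the plan is to show that $\overline{U_\sigma}$ meets the ceiling $\mathbb T\times\{1\}$ in no nondegenerate interval. I would treat separately the \emph{resonant} itineraries — those that are, after a cyclic shift, of the form $(\underbrace{+\cdots+}_{k}\,-)^{\,j}$ with $j\ge 2$ — and all others.

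\emph{Resonant $\sigma$.} Here I would show $U_\sigma=\varnothing$, so there is nothing to prove. Suppose $f_{a,b}$ ($b<1$) had an $n$-cycle with $\sigma=(+^{k}-)^{j}$, $n=j(k+1)$, and let $p_1,\dots,p_j$ be the orbit points immediately following the $j$ visits to $I_-$. They are pairwise distinct, and each lies in $J:=f_{a,b}(I_-)$, an arc of length $\tfrac12B^-=1-b$. Between $p_i$ and $p_{i+1}$ the itinerary reads $+^{k}-$, so $p_{i+1}=f_{a,b}^{k+1}(p_i)$, and on that one affine branch $f_{a,b}^{k+1}$ acts as $x\mapsto(\rho x+K)\sslash1$ with $\rho=2(1-b)(2(1+b))^{k}$ and the \emph{same} constant $K$ for every $i$; thus $p_{i+1}\equiv\rho p_i+K$, whence $p_{i+1}-p_i\equiv\rho(p_i-p_{i-1})\pmod 1$ cyclically. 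Attractivity gives $\rho^{j}=\mu<1$, so $\rho<1$ and $1-b<\tfrac12$; hence the signed circle gaps $\delta_i:=p_{i+1}-p_i\in(-\tfrac12,\tfrac12]$ satisfy $|\delta_i|,|\rho\delta_{i-1}|<\tfrac12$, and the congruence upgrades to the equality $\delta_i=\rho\delta_{i-1}$. Going once around, $\delta_1=\rho^{j}\delta_1=\mu\delta_1$, so $\delta_1=0$, all $\delta_i=0$, and $p_1=\dots=p_j$, contradicting distinctness.

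\emph{General $\sigma$.} Assume $\sigma$ is not a cyclic power of any $+^{k}-$, and suppose toward a contradiction that $\overline{U_\sigma}\supseteq[a_1,a_2]\times\{1\}$ with $a_1<a_2$. Since $f_{a,1}$ depends on $a$ only through an additive constant, $\partial_a f_{a,1}\equiv1$; writing $c(a):=(a+\tfrac32)\sslash1$ and $z_s(a):=f_{a,1}^{s}(c(a))$ and differentiating along the orbit, $z_{s+1}'=B(z_s)\,z_s'+1\ge1$ wherever defined (as the local slope $B\in\{0,4\}$ and $z_0'=1$); so each $z_s$ is continuous and strictly monotone in $a$ and hence meets $\{0,\tfrac12\}$ at finitely many $a\in[a_1,a_2]$. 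Pick $a\in(a_1,a_2)$ avoiding these finitely many values (for $s<n$). As $(a,1)\in\overline{U_\sigma}$, take $(a_\nu,b_\nu)\to(a,1)$ in $U_\sigma$ with attracting $n$-cycles $x_0^\nu,\dots,x_{n-1}^\nu$, $x_j^\nu\in I_{\sigma_j}^{\circ}$; after a subsequence $x_j^\nu\to y_j\in I_{\sigma_j}$, and since $f_{a_\nu,b_\nu}\to f_{a,1}$ uniformly and $f_{a,1}$ is continuous, $f_{a,1}(y_j)=y_{j+1}$ cyclically. Because $B^-=0$ at $b=1$, $f_{a,1}$ collapses $I_-$ to $c(a)$, so $y_{t+1}=c(a)$ at every minus $t$ of $\sigma$; taking the gap $d$ from the first minus $t_1$ to the next minus $t_2$ gives $f_{a,1}^{\,d-1}(c(a))=y_{t_2}\in I_-$, hence $f_{a,1}^{\,d}(c(a))=c(a)$. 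By our choice of $a$ the orbit of $c(a)$ avoids $\{0,\tfrac12\}$, so its itinerary symbols are unambiguous and coincide with the corresponding entries of $\sigma$; reading them off, $c(a)$ has period exactly $d$ with itinerary $+^{d-1}-$ (a shorter period would force a $-$ among $z_0,\dots,z_{d-2}$, which is impossible as that block equals $+^{d-1}$). Repeating the argument at each minus of $\sigma$ forces every gap between consecutive minuses to equal $d$ and every corresponding stretch of $\sigma$ to equal $+^{d-1}-$; thus $\sigma=(+^{d-1}-)^{m}$ up to a cyclic shift — contradicting our assumption. Hence $\overline{U_\sigma}$ contains no nondegenerate ceiling interval, and every component $H$ of $U_\sigma$ is an eye.

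\emph{Main obstacle.} The essential difficulty is the general case. On the ceiling the maps $f_{a,1}$ are badly non-injective — they crush all of $I_-$ to one point — so a family of genuine $n$-cycles running up to $b=1$ need not converge to an $n$-cycle, but instead collapses onto the (possibly far shorter) orbit of $c(a)$; one must extract precisely which combinatorics survives this collapse (that $\sigma$ is a cyclic power of some $+^{k}-$) and, to strengthen ``no single ceiling point'' to ``no ceiling interval'', exploit the monotonicity of $a\mapsto f_{a,1}^{\,s}(c(a))$ to discard the degenerate parameters where the orbit of $c(a)$ runs into the break points $0,\tfrac12$. The resonant case, settled by the short linear computation above, turns out to be vacuous.
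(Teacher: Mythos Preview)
Your argument is correct, but it follows a markedly different route from the paper's. The paper never splits into resonant/non-resonant cases and never passes to the degenerate limit $b=1$. Instead it argues entirely inside $\mathcal H$: since $\sigma$ has at least two minuses, there are at least two orbit points in $I_-$; the \emph{rightmost} one, $x_r$, has an immediate basin whose left endpoint $z$ lies strictly inside $I_-$ (because $1/2$ is already absorbed by the basin of the leftmost $I_-$-point, via Lemma~\ref{0.5 in closure}). Then $z$ and $x_r$ are both fixed by $f_{a,b}^n$, and every point of $(z,x_r)\subset I_-^\circ$ has first slope $B^-$, so $(f^n)'\le B^- (B^+)^{n-1}$ throughout. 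The mean-value inequality between the two fixed points forces $B^-(B^+)^{n-1}\ge 1$, i.e.\ $b\le b_0(n)<1$ uniformly. Thus $\overline H$ misses the ceiling entirely.

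Comparing the two: the paper's proof is a three-line slope estimate that recycles the basin machinery of Section~3 and yields an explicit, uniform gap from $b=1$ (depending only on $n$, not on $\sigma$ or $H$). Your approach is longer but self-contained: the degeneration argument at $b=1$ shows that any $\sigma$ that can be approached along a ceiling interval must be a cyclic power $(+^{k}-)^{j}$, and your ``resonant'' computation then shows such itineraries are in fact \emph{never} realized as attracting cycles---an auxiliary result the paper does not state. The cost is that you only conclude $\overline{U_\sigma}$ contains no nondegenerate ceiling arc, whereas the paper gets the stronger conclusion that $H$ is uniformly bounded away from $b=1$. Both suffice for the definition of eye, but the paper's bound is sharper and its proof considerably shorter.
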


Theorem \ref{conjugate doubling} shows that any parameter for which the corresponding map is without non repelling cycle is in fact conjugate to the Doubling map. We have used the definition of neutral periodic point from topological dynamics (see Definition \ref{neutral cycle}) because the maps in PLPDM are non-differentiable at $0 $ and $\frac{1}{2}$. Proposition \ref{break bif} posits that similar to the DSM family, the bifurcation in PLPDM family is also through the creation of a neutral cycle. The break points $0$ and $\frac{1}{2}$ themselves become neutral periodic points respectively at the right and left boundary of a hyperbolic component.

In the final section we pose two questions. The first one conjectures that any \textit{admissible post critical combinatorics} is realized  in the PLPDM family, creating a parallel with the parameter space of family of quadratic polynomials, i.e., the Mandelbrot set. The second question asks for a dynamically natural uniformization of hyperbolic components (tongues and eyes) that may lead to  analytic motion of the \textit{maximal chaotic set} inside a component. If true, this will establish a parallel with the DSM family (see \cite{BBM}).

\section{Uniqueness of  attracting cycle and definition of Type}

We wish to define something akin to `type' for tongues  in PLPDM family, as was done in DSM. In \cite{MR} the two key ingredients for the definition of type are the following.
\begin{itemize}
    \item Existence of semiconjugacy between maps in the Double Standard Family and the doubling map.
    \item  Existence of at most one attracting cycle for every parameter $(a,b) $ in the Double Standard Family.
\end{itemize}
  The semiconjugacy with the doubling map is available in the PLPDM case using the same machinary. Hence what we aim to prove is the existence of at most one attracting cycle for every parameter $(a,b) \in \mathcal P$.

Before that we state a lemma that the parameter space is symmetric with respect to $a=\frac{1}{4}$ where $a \in \mathbb{R}/\mathbb{Z}$. 
\begin{lemma} 
For $(a,b)\in \mathcal{P}$,
\[
    f_{1/2-a,b}(1/2-x) = 1/2 - f_{a,b}(x).
\]
\end{lemma}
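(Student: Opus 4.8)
The plan is to realize the stated identity as the pointwise form of a conjugacy between $f_{a,b}$ and $f_{1/2-a,b}$ via the reflection $R\colon\T\to\T$, $R(x)=1/2-x$, which is an involution. The only substantive ingredient is the antisymmetry of the straight sine map about $x=1/4$, namely
\[
  S\bigl(\tfrac12-x\bigr)=-S(x)\qquad\text{for all }x\in\T .
\]
I would verify this straight from the piecewise definition of $S$. First note that $S$ is actually continuous on $\T$: the two branches agree at the breakpoints, $S(0)=S(1^-)=-1$ and $S(1/2)=1$ from either side. Hence it is enough to check the identity for $x\in[0,1/2]$, where $1/2-x$ again lies in $[0,1/2]$ and $S(1/2-x)=4(1/2-x)-1=1-4x=-(4x-1)=-S(x)$; the case $x\in[1/2,1)$ then follows by applying the proved case to $1/2-x$ in place of $x$, using that $R$ is an involution. (Alternatively, $S$ is the piecewise-linear interpolant at the nodes $\{0,1/4,1/2,3/4\}$ of $x\mapsto\sin 2\pi(x-1/4)$, which is odd about $x=1/4$; since this nodal set is $R$-invariant and $R$ negates that function, the interpolant inherits the antisymmetry.)

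With the antisymmetry in hand, the rest is routine mod-$1$ algebra. Expanding and using $S(1/2-x)=-S(x)$,
\[
  f_{1/2-a,b}\bigl(\tfrac12-x\bigr)
  =\Bigl(2(\tfrac12-x)+(\tfrac12-a)+\tfrac b2 S\bigl(\tfrac12-x\bigr)\Bigr)\sslash1
  =\Bigl(\tfrac32-2x-a-\tfrac b2 S(x)\Bigr)\sslash1 ,
\]
and modulo $1$ this equals $\tfrac12-\bigl(2x+a+\tfrac b2 S(x)\bigr)$. On the other hand $1/2-f_{a,b}(x)=\tfrac12-\bigl[(2x+a+\tfrac b2 S(x))\sslash1\bigr]$, and since reduction mod $1$ does not change an element of $\R/\Z$ while the asserted equality is an equality in $\R/\Z$, the two sides coincide. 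This gives the lemma.

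Finally I would record the structural reading used later: the identity says $f_{1/2-a,b}\circ R=R\circ f_{a,b}$, so $R\circ f_{1/2-a,b}\circ R=f_{a,b}$, i.e. $R$ conjugates $f_{1/2-a,b}$ to $f_{a,b}$; consequently $(a,b)\in\mathcal{H}$ iff $(1/2-a,b)\in\mathcal{H}$, with attracting cycles and their slope itineraries carried along by $R$, which is the advertised symmetry of the parameter space about $a=1/4$. I do not anticipate a genuine obstacle: the only points demanding care are the breakpoint case analysis for $S$ and keeping the $\sslash1$ reductions honest, both of which are mechanical.
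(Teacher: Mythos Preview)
Your argument is correct, and in fact the paper states this lemma without proof, so there is nothing to compare against; the route via the antisymmetry $S(1/2-x)=-S(x)$ followed by a direct mod-$1$ computation is exactly the natural one.

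One small remark on the case split for $S$: the involution shortcut you offer for $x\in[1/2,1)$ does not quite close as written, because $R(x)=1/2-x$ maps $(1/2,1)$ to itself in $\T$ (not into $[0,1/2]$), so ``applying the proved case to $1/2-x$'' is circular. Your alternative interpolation argument is sound, and the direct check on $(1/2,1)$ is equally short: there $1/2-x\equiv 3/2-x\in(1/2,1)$, so $S(1/2-x)=-4(3/2-x)+3=4x-3=-(-4x+3)=-S(x)$. The remaining mod-$1$ algebra and the conjugacy reading $f_{1/2-a,b}\circ R=R\circ f_{a,b}$ are fine.
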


 We will now prove an important lemma.
\begin{lemma}\label{0.5 in closure}
If $x$ is the attracting periodic point in some periodic orbit that is closest to $1/2$ within $I_-$, then $1/2$ is contained in the closure of the component of basin of attraction that contains $x$.
\end{lemma}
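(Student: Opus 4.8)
The natural approach is by contradiction. Write the attracting cycle as $x=x_0,x_1,\dots,x_{p-1}$ and let $W_j$ be the component of the basin of attraction that contains $x_j$; then the $W_j$ are pairwise disjoint, $f_{a,b}(W_j)\subseteq W_{j+1}$ and $f_{a,b}^p(W_j)\subseteq W_j$ (indices mod $p$). Suppose $1/2\notin\overline{W_0}$. Since, by hypothesis, no point of the cycle lies strictly between $1/2$ and $x$ in $I_-$, the component $W_0$ is an arc $(c,d)$ with $1/2<c<x$, so $c\in I_-^{\circ}$.

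First I would record the trapping of the endpoints: by continuity $f_{a,b}^p(\overline{W_0})\subseteq\overline{W_0}$, and since the basin is open and forward invariant a point of $\overline{W_0}$ lying in the open basin would be an interior point of $W_0$; hence $f_{a,b}^p(\{c,d\})\subseteq\{c,d\}$, so $c,d$ are eventually periodic and $f_{a,b}^p(\overline{W_j})\subseteq\overline{W_j}$ for every $j$. Next, the forward orbit of $\overline{W_0}$ must hit a break point: if none of $\overline{W_0},f_{a,b}(\overline{W_0}),\dots,f_{a,b}^{p-1}(\overline{W_0})$ contained $0$ or $1/2$ in its interior, then $f_{a,b}^p$ would be affine on $\overline{W_0}$ and, since it maps $\overline{W_0}$ into itself with the attracting fixed point $x$ in the interior, it would be a strict contraction — pushing $\{c,d\}$ into the interior of $W_0$ and contradicting the trapping. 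So some $f_{a,b}^{j_0}(\overline{W_0})$, $0\le j_0\le p-1$, has a break point in its interior.

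The crux is to show this break point can (and must) be taken to be $1/2$, absorbed into the closure of $W_0$ itself. The obstruction is that a priori only $0$ might ever be captured; here one has to exploit the explicit piecewise linear structure — in the parameter range that supports attracting cycles one has $b>1/2$, hence $B^-=2(1-b)\in(0,1)$ and $B^+=2(1+b)\in(3,4)$, so on arcs $J$ missing the break points $B^-|J|\le|f_{a,b}(J)|\le B^+|J|$ with $B^-<1<B^+$ — together with the minimality hypothesis and, if needed, the symmetry $f_{1/2-a,b}(1/2-\,\cdot\,)=1/2-f_{a,b}(\,\cdot\,)$ about $a=1/4$. Granting $1/2\in f_{a,b}^{j_0}(\overline{W_0})^{\circ}$, applying $f_{a,b}^{p-j_0}$ and using $f_{a,b}(W_j)\subseteq W_{j+1}$ shows that the forward orbit of the break point $1/2$ enters $\overline{W_0}$; tracing backwards through the cyclically ordered components $W_0,\dots,W_{p-1}$ gives $1/2\in\overline{W_i}$ for some $i$.

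The minimality hypothesis now pins down $i=0$: if $i\ne0$ and $x_i\in I_-$ then $x_i>x$ (as $x$ is the cycle point closest to $1/2$ in $I_-$), and the sub-arc of $\overline{W_i}$ joining $1/2$ to $x_i$, unless it winds the long way around the circle, contains $x$; but $x\in W_0$, which is a neighbourhood of $x$ disjoint from $W_i$ — impossible. The remaining possibilities ($x_i\in I_+$, or $W_i$ an arc of length $\ge1/2$) are excluded by the same disjointness together with the expansion/contraction estimates above. Hence $1/2\in\overline{W_0}$, contradicting our assumption, and the lemma follows. I expect the main obstacle to be exactly the case analysis of the third paragraph — verifying that the break point absorbed by the forward orbit of $\overline{W_0}$ is $1/2$, and that $\overline{W_0}$ is the component whose closure it enters — together with the bookkeeping for the degenerate configurations (an orbit passing through a break point, or a basin component winding more than half of the circle).
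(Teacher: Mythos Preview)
Your proposal has a genuine gap, and it is precisely the one you flag yourself in the third paragraph: after step~3 you only know that \emph{some} break point lies in the interior of some $f_{a,b}^{j_0}(\overline{W_0})$, but nothing you have written forces this break point to be $1/2$ rather than $0$. The symmetry $a\mapsto 1/2-a$ exchanges the two break points but also changes the parameter, so it does not help for a fixed $(a,b)$; and the slope bounds $B^-<1<B^+$ by themselves do not distinguish the two. Your step~5 is also incomplete: even granting $1/2\in\overline{W_i}$ for some $i$, the cases ``$x_i\in I_+$'' and ``$\overline{W_i}$ winds the long way'' are dismissed by assertion, and the disjointness/expansion estimates you invoke do not obviously rule them out.

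The paper avoids break-point chasing entirely and gives a much shorter argument based on \emph{itinerary comparison}. Let $c$ be the left endpoint of $W_0$ and suppose $c\in(1/2,x)$; as you note, $c$ is a fixed point of $f_{a,b}^p$ repelling from the right. Since $f_{a,b}$ is orientation preserving on each $W_j$ and the $W_j$ are pairwise disjoint, $f_{a,b}^i(c)$ is the left endpoint of $W_i$ for every $i$. Now whenever $f_{a,b}^i(x)\in I_-$ one has $f_{a,b}^i(x)\ge x$ (minimality of $x$), so $W_i$ sits to the right of $W_0$ and hence its left endpoint $f_{a,b}^i(c)$ also lies in $I_-$. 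Thus every point slightly to the right of $c$ has at least as many $-$'s in its length-$p$ itinerary as $x$ does, so the slope of $f_{a,b}^p$ there is at most the slope at $x$, which is $<1$. This contradicts $c$ being repelling from the right. The whole argument is three lines of slope counting; no analysis of which break point is absorbed, and no case split on the geometry of the $W_i$, is needed.
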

 \begin{proof}
By an interval $[x_1,x_2]$ or $(x_1, x_2)$ in $\mathbb{T}$ we mean the closed or open arc joining $x_1$ to $x_2$ in the counterclockwise direction. Let $n \in \mathbb{N}$ be the  period of $x$. Let the component of immediate basin of attraction containing $x$ be $B_x$ and let $\overline{B_x}=[x_0,y_0]$. Then it is clear that $x_0,y_0$ are fixed points of $f^n$. Indeed the point $x_0$ is a fixed point of $f^n$ that is repelling from the right as $x$ is an attracting fixed point of $f^n$. Observe that, $f^i(B_x)=B_{f^i(x)}$ where $B_x$ and $B_{f^i(x)}$ are the basin components of $x$ and $f^i(x)$ respectively. 
If $1/2$ is not contained in the closure of $B_x$ then there is some $x_0 \in (1/2,x)$ which is  fixed point of $f^n$ that is repelling from the right. Now
$f^i(x) \in I_{-} \implies f^i(x_0) \in I_{-}$, as $f$ is orientation preserving and the components of basin of attraction are disjoint. 
This means that for any $y \in (x_0,x)$  whose forward orbit avoids the break points $0,\frac{1}{2}$ till the $n$'th iterate, the number of $'-'$ in the itinerary of $y$ is greater than or equal to that of $x$. Hence there exists some $\delta>0$ such that the slope of $f^n$ on $(x_0,x_0+\delta)$ is less than or equal to the slope of $f^n$ at $x$, which is a contradiction as $x_0$ is  repelling from the right (since $x$ is attracting). Hence our assumption was wrong, meaning that $1/2$ is in the closure of $B_x$. 
    \end{proof}

Now we state the main result of this section.    

\begin{theorem}\label{uniqueness of periodic attractor}
For every $(a,b) \in \mathcal H, f_{a,b}$ has exactly one attracting cycle.  
\end{theorem}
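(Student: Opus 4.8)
The plan is to argue by contradiction. Suppose $f_{a,b}$ has two distinct attracting cycles $C_1$ and $C_2$, of periods $n_1$ and $n_2$; I will derive a contradiction by showing that their immediate basins are forced to overlap near the break point $1/2$, whereas immediate basins of distinct attracting cycles are disjoint.

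First I would record the elementary fact (already noted after the definition of itinerary) that every attracting cycle contains at least one point of $I_-$: a cycle with all-$+$ itinerary has multiplier $(B^+)^{n} = \bigl(2(1+b)\bigr)^{n} \geq 2^{n} > 1$ and hence is repelling. Consequently, for $j = 1, 2$ there is a well-defined point $x_j \in C_j$ closest to $1/2$ within $I_-$, namely the first point of $C_j$ met on moving counterclockwise from $1/2$ inside $I_- = [1/2, 1]$. Let $B_j$ be the component of the immediate basin of attraction of $C_j$ containing $x_j$. By Lemma \ref{0.5 in closure} and its proof, $B_j = (x_0^{(j)}, y_0^{(j)})$ is an open arc whose endpoints are fixed by $f_{a,b}^{n_j}$, and $1/2 \in \overline{B_j}$; moreover, inspecting that proof gives the stronger statement that $1/2$ lies in the half-open sub-arc $[x_0^{(j)}, x_j)$, so that $1/2$ is either the left endpoint $x_0^{(j)}$ of $B_j$ or an interior point of $B_j$, but never its right endpoint $y_0^{(j)}$.

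Then I would run a short case analysis, using throughout that $B_1 \cap B_2 = \emptyset$. If $1/2$ is interior to both $B_1$ and $B_2$, this is immediately absurd. If $1/2$ is interior to one of them, say $B_1$, and an endpoint of the other, then $B_1$ is an open neighbourhood of $1/2$ while points of $B_2$ accumulate on $1/2 \in \overline{B_2}$, forcing $B_1 \cap B_2 \neq \emptyset$. Finally, if $1/2$ is the left endpoint of both $B_1$ and $B_2$, then, since each $B_j$ runs counterclockwise from $1/2$ through $x_j \in I_-$, both arcs contain a common one-sided neighbourhood $(1/2, 1/2 + \varepsilon)$ of $1/2$, and again $B_1 \cap B_2 \neq \emptyset$. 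Every case contradicts disjointness, so $f_{a,b}$ has at most one attracting cycle; since $(a,b) \in \mathcal{H}$ guarantees one exists, it has exactly one.

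The main point needing extra care — and the only real obstacle — is the degenerate situation where $1/2$ is itself periodic for $f_{a,b}$, in which case the multiplier language fails and $\overline{B_j}$ could behave atypically. This has to be handled separately, by observing that a periodic break point, sitting at the junction of a slope-$B^+$ and a slope-$B^-$ branch, is necessarily the unique non-repelling orbit of $f_{a,b}$ (this is in the spirit of Proposition \ref{break bif}), so that again there is at most one attracting cycle. Apart from this, the argument is purely topological: all of the analytic content — that the closure of the relevant basin component is a closed arc whose endpoints are fixed by an iterate and which captures $1/2$ — has already been established in Lemma \ref{0.5 in closure}, and what remains is only to organize the circular topology of $B_1$ and $B_2$ around the point $1/2$.
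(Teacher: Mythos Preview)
Your proposal is correct and follows essentially the same route as the paper: invoke Lemma~\ref{0.5 in closure} for each hypothetical attracting cycle to place $1/2$ in the closure of the immediate basin component of the periodic point nearest $1/2$ in $I_-$, and then conclude that the two basin components intersect, contradicting disjointness. The paper's version is terser (it simply asserts the intersection without your case analysis, and it does not flag the periodic-break-point edge case), but the argument is the same.
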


\begin{proof}
The result is obvious if $f_{a,b}$ has an attracting fixed point, since $f_{a,b}$ has constant slope on $I_-$. 
So assume that $f_{a,b}$ has two attracting periodic orbits $\mathcal{O}_1,\mathcal{O}_2$ of periods $\geq 2 $, for some  $(a,b) \in \mathcal H$. Let $x \in \mathcal{O}_1$ and $y \in \mathcal{O}_2$ be the be points closest to $1/2$ within $I_-$. By lemma \ref{0.5 in closure}, $1/2$ in the closure of immediate basin components $B_x$ of $x$ and $B_y$ of $y$. Note that immediate basin components of an attracting periodic cycle  are disjoint. Since $x,y \in I_-$, $B_x$ and $B_y$ must intersect, which is impossible. 
\end{proof}

One may also consider the right endpoint of the immediate basin of the rightmost periodic point in $I_-$ and using the same ideas from last proof show that $0$ is in the immediate basin of attraction. Incorporating this with the facts from Lemma \ref{0.5 in closure},  we obtain the following. 
\begin{lemma}\label{0,1 in basin}
    For $(a,b)\in \mathcal{H}$ both $0,1/2$ belong to the immediate basin of attraction of $f_{a,b}$.

\end{lemma}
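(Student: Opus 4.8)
The plan is to mimic the argument of Lemma \ref{0.5 in closure}, but applied to the \emph{right} endpoint of the immediate basin of the periodic point of the attracting cycle that lies closest to $1/2$ \emph{from the left within $I_-$}, i.e. the rightmost periodic point of the cycle contained in $I_-$. Concretely, let $x$ be that rightmost point of the (unique, by Theorem \ref{uniqueness of periodic attractor}) attracting cycle lying in $I_-$, let $n$ be its period, let $B_x$ be the component of the immediate basin containing $x$, and write $\overline{B_x}=[x_0,y_0]$ in the counterclockwise convention, so that $y_0$ is the right endpoint and is a fixed point of $f^n$ that is repelling from the left. The claim is that $y_0 = 0$ (equivalently $y_0 \equiv 1 \in \mathbb{T}$), which combined with Lemma \ref{0.5 in closure} — which already places $1/2$ in the closure of some immediate basin component, hence (by $f$-invariance of the collection of basin components and disjointness) in the immediate basin of attraction — yields the statement. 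One should note that $0$ lying in the \emph{closure} of a basin component is the same as $0$ lying in the immediate basin, because the open arc $(0,1/2)$ on which $f$ is expanding meets any basin component as an open set, and $0$ is an endpoint of $I_+$; more carefully, since $f$ is orientation preserving and continuous, if $0 \in \overline{B_w}$ for some basin component $B_w$ then a one-sided neighborhood of $0$ lies in $B_w$, so we really want $0$ to be an interior point, which follows once we know $0$ is the common endpoint of two adjacent basin components.

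First I would set up the same itinerary-counting estimate as in Lemma \ref{0.5 in closure}: suppose for contradiction that $y_0 \in (x, 0)$ (counterclockwise), i.e. $y_0$ is strictly between $x$ and the break point $0$, still inside $I_-$ or wrapping just past it. Using that $f$ is orientation preserving and that the immediate basin components are pairwise disjoint and permuted by $f$, I would argue that for every $y \in (x, y_0)$ whose forward orbit avoids $\{0, 1/2\}$ up to time $n$, the number of $-$'s in the length-$n$ itinerary of $y$ is at least the number of $-$'s in the itinerary of $x$. Hence the slope of $f^n$ on a small right-neighborhood $(y_0 - \delta, y_0)$ of $y_0$ — wait, left-neighborhood — is a product $B^{-^{m}} B^{+^{n-m}}$ with $m$ at least as large as the corresponding count for $x$; since $B^- < 1 < B^+$ and $|B^- B^+| = 4(1-b^2) $ governs contraction, this forces $|(f^n)'|$ near $y_0$ to be no larger than $|(f^n)'(x)| < 1$, contradicting that $y_0$ is repelling from the left. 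Therefore no such $y_0$ strictly between $x$ and $0$ exists, so $y_0 = 0$.

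The one genuinely new bookkeeping point — and the step I expect to be the main obstacle — is handling the break point $0$ correctly, since $0$ is exactly the endpoint we are trying to capture and $f$ is not differentiable there: I must be careful that the interval $(x, y_0)$ over which I run the slope comparison does not secretly contain $0$ in its interior, and that the "repelling from the left" property of $y_0$ is the right hypothesis (it is, by the same reasoning as in Lemma \ref{0.5 in closure}: $x$ attracting for $f^n$ forces the endpoint $y_0$ of $\overline{B_x}$ to be repelling from inside). Once $y_0 = 0$ is established, I would invoke the orientation-preserving dynamics once more: the basin component immediately to the right of $0$ is $B_{f^{k}(x)}$ for the appropriate $k$, so $0$ is a common endpoint of two distinct immediate basin components, hence an interior point of the immediate basin of attraction. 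Combining with $1/2$ (from Lemma \ref{0.5 in closure}, together with the observation that $1/2$ is likewise sandwiched between two basin components) gives that both $0$ and $1/2$ lie in the immediate basin of attraction, completing the proof. Throughout I would lean on Theorem \ref{uniqueness of periodic attractor} to speak of \emph{the} attracting cycle, so that "$x$ closest to $1/2$ within $I_-$" and "$x$ rightmost in $I_-$" unambiguously refer to points of the same orbit.
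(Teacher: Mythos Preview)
Your core approach is exactly what the paper does: the paper's entire argument for this lemma is the one-line remark ``one may also consider the right endpoint of the immediate basin of the rightmost periodic point in $I_-$ and using the same ideas from last proof show that $0$ is in the immediate basin of attraction,'' and your write-up is a faithful expansion of that, mirroring the slope-comparison in Lemma~\ref{0.5 in closure}.

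One small logical slip worth flagging: your claim that ``$0$ is a common endpoint of two distinct immediate basin components, hence an interior point of the immediate basin of attraction'' does not follow. If $B_1$ and $B_2$ are open arcs with common endpoint $0$, then $0\notin B_1\cup B_2$; a common boundary point of two open sets is still a boundary point of their union. Moreover, the assertion that the arc immediately to the right of $0$ is another basin component $B_{f^k(x)}$ is not justified by anything you have written. The paper does not actually supply this ``closure $\Rightarrow$ interior'' upgrade either (it uses the phrase ``immediate basin'' somewhat loosely here, and later invokes the interior claim only informally), so this gap is shared rather than unique to your attempt; but you should either drop the interior claim and state the conclusion as $0,1/2\in\overline{\mathcal{B}_{a,b}}$, or give a separate argument (e.g.\ if $0$ were on the boundary it would be a periodic point of $f^n$, and one can check directly that its full orbit then forces a contradiction with the itinerary/slope structure).
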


\subsection{\textbf{Definition of Type}}
 Authors in \cite{MR} first introduce a lift of the map ignoring the $ \sslash1 $. We will follow the same procedure.
 
 Denote the map $\Tilde{f}_{a,b}: I \to \mathbb{R}$ by,
\[
\Tilde{f}_{a,b}(x)=\begin{cases}
              \left(2x+a+\dfrac{b}{2} S(x) \right)+1 &\text{  if $(a-b/2) \in [-1/2,0)$},\\\\
              \left(2x+a+\dfrac{b}{2} S(x) \right)&\text{  if $(a -b/2) \in [0,1)$}.\end{cases}
              \]

Notice $\Tilde{f}_{a,b}$ is just $f_{a,b}$ with the $'\sslash 1'$ removed. Now define $F_{a,b}: \mathbb{R} \to \mathbb{R}$ as,
\[
F_{a,b}(x)=\Tilde{f}_{a,b}(x-k)+2k=\Tilde{f}_{a,b}(x\sslash 1)+2k, \text{ if } x \in [k,k+1], k \in \mathbb{Z}.
\]
The reason for such a definition of $\Tilde{f}_{a,b}$ is because we want that $F_{a,b}(0) \in [0,1)$. Since two lifts differ by an integer, this definition of $\Tilde{f}_{a,b}$ ensures an unique choice of  $F_{a,b}$.\\
\indent To work in general setting the authors  abstract out the following properties of the lift for the Double Standard Family in \cite{MR} which are also true for PLPDM family. Here we state these results from \cite{MR} as lemmas.
\vspace{.3cm}
\begin{lemma} The following are true for $F_{a,b}$ as defined above.
\begin{enumerate}
\item  $F_{a,b}$ is increasing in $x$,
\item  $F_{a,b}(x+k)=F_{a,b}(x)+2k$, for any $k \in \mathbb{Z}$, and $x\in \mathbb R$,
\item $F_{a,b}$ is continuous as function of $(a,b,x)$.
\end{enumerate}
  
\end{lemma}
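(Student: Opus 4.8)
The plan is to verify all three assertions by direct computation from the explicit piecewise-linear description of $\widetilde f_{a,b}$, the only genuine content being two ``gluing identities'' that show the various piecewise definitions fit together. Writing $\epsilon=\epsilon(a,b)\in\{0,1\}$ for the integer added in the definition of $\widetilde f_{a,b}$ (so $\epsilon=1$ when $a-b/2\in[-1/2,0)$ and $\epsilon=0$ when $a-b/2\in[0,1)$), the first step is to record that on $I_+$ one has $\widetilde f_{a,b}(x)=B^+x+A^++\epsilon=2(1+b)x+(a-b/2)+\epsilon$ and on $I_-$ one has $\widetilde f_{a,b}(x)=B^-x+A^-+\epsilon=2(1-b)x+(a+3b/2)+\epsilon$, and then to check
\[
\widetilde f_{a,b}(1/2^-)=1+a+b/2+\epsilon=\widetilde f_{a,b}(1/2^+),\qquad \widetilde f_{a,b}(1)=2+a-b/2+\epsilon=\widetilde f_{a,b}(0)+2 .
\]
Both are one-line substitutions into the displayed formulas.

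For (2) I would argue straight from the definition: if $x\in[m,m+1]$ then $x+k\in[m+k,m+k+1]$, so $F_{a,b}(x+k)=\widetilde f_{a,b}\bigl((x+k)-(m+k)\bigr)+2(m+k)=\widetilde f_{a,b}(x-m)+2m+2k=F_{a,b}(x)+2k$; the only point needing care is that $F_{a,b}$ is well defined at integers, i.e.\ that the two admissible choices of $m$ there give the same value, and this is exactly the second gluing identity $\widetilde f_{a,b}(1)=\widetilde f_{a,b}(0)+2$. For (1), the slope of $\widetilde f_{a,b}$ on $I_+$ is $B^+=2(1+b)>0$ and on $I_-$ it is $B^-=2(1-b)\ge 0$, while the first gluing identity makes $\widetilde f_{a,b}$ continuous at $1/2$; hence $\widetilde f_{a,b}$ is non-decreasing on $[0,1]$, strictly increasing once $b<1$. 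Since $F_{a,b}$ is obtained on each $[k,k+1]$ by precomposing $\widetilde f_{a,b}$ with the translation $x\mapsto x-k$ and adding the constant $2k$, and consecutive pieces agree at the common integer endpoint by the second identity, $F_{a,b}$ is non-decreasing on all of $\mathbb R$ (strictly increasing when $b<1$). One should flag that at $b=1$ the map $\widetilde f_{a,b}$ is constant on $I_-$, so here ``increasing'' must be read as ``non-decreasing''; this is the one place where PLPDM genuinely differs from the analytic DSM, and it is harmless since $\mathcal H\subset\{b<1\}$.

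For (3), on any region of parameters where $\epsilon$ is (locally) constant, the two formulas above exhibit $\widetilde f_{a,b}(x)$ on $I_+$ and on $I_-$ as polynomials in $(a,b,x)$, hence jointly continuous, and they agree on the overlap $\{x=1/2\}$ by the first gluing identity; therefore $(a,b,x)\mapsto\widetilde f_{a,b}(x)$ is jointly continuous for $x\in[0,1]$. Extending to $F_{a,b}$ on each $[k,k+1]$ by a fixed translation and a fixed additive constant preserves joint continuity, and the pieces match at integer $x$ by the second gluing identity, so $F_{a,b}$ is continuous in $(a,b,x)$.

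I expect no real obstacle: the argument is pure bookkeeping and the entire proof collapses to the two gluing identities together with the sign information $B^\pm\ge 0$. The only subtlety worth stating is the normalizing constant $\epsilon(a,b)$: its jump across the locus $\{a=b/2\}$ and its translates is what prevents a single $F_{a,b}$ from being continuous on the whole $a$-circle, which is why, exactly as in \cite{MR}, one either fixes a representative of $a$ in a fundamental domain or reads (3) locally; on the relevant part of parameter space $\epsilon$ is locally constant and the computation above applies verbatim.
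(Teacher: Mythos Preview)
Your proof is correct. The paper does not actually prove this lemma: it simply states the three properties and refers to \cite{MR}, remarking that the abstract properties of the lift established there for the Double Standard Family carry over to the PLPDM setting. Your approach, by contrast, is a direct verification from the explicit piecewise-linear formula, reducing everything to the two gluing identities $\widetilde f_{a,b}(1/2^-)=\widetilde f_{a,b}(1/2^+)$ and $\widetilde f_{a,b}(1)=\widetilde f_{a,b}(0)+2$ together with $B^\pm\ge 0$. This is more self-contained and arguably more appropriate here, since the PLPDM lift is not literally covered by the statements in \cite{MR}. You also correctly flag two points the paper glosses over: that at $b=1$ the map is only non-decreasing (so ``increasing'' must be read in the weak sense), and that the normalizing integer $\epsilon(a,b)$ jumps across $\{a-b/2\in\mathbb Z\}$, so joint continuity in $(a,b,x)$ is really a local statement in the parameter, exactly as in \cite{MR}. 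Both observations are accurate and worth making explicit.
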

We will now state a lemma that will help us transition between $f_{a,b}$ and $F_{a,b}$.
\begin{lemma} \label{lift}
The following are true for a circle map and its lift.
    \begin{itemize}
        \item   Let $F: \mathbb{R} \to \mathbb{R}$ be the lift of $f: \mathbb{T} \to \mathbb{T}$. Let $J \subset \mathbb{T}$ then $f(x)\in J$ iff, $F(x)+k \in J$ for some fixed $k \in \mathbb{Z}.$ 
        \item Observe that  $F_{a,b}$ is a lift of $f_{a,b}$, i.e.  $F_{a,b}(x)\sslash1=f_{a,b}(x\sslash1)$ and hence $F^n_{a,b}(x)\sslash1=f^n_{a,b}(x\sslash1)$. 
        \item Observe that $F_{a,b}^n(x)=\Tilde{f}_{a,b}(F_{a,b}^{n-1}(x)\sslash1)+2[F_{a,b}^{n-1}(x)]$, where $y \mapsto [y]$ is the greatest integer function. 
        \item $F_{a,b}$ is a degree 2 lift of $f_{a,b}$, i.e. $F_{a,b}^n(x+k)=F^n_{a,b}(x)+2^nk, n \in \mathbb{N}, k \in \mathbb{Z}, x \in \mathbb{R}.$
    \end{itemize}
  
\end{lemma}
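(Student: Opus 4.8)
The plan is to prove all four items by unwinding the definitions of a lift, of $\Tilde{f}_{a,b}$, and of $F_{a,b}$, supplemented by a one-line induction on $n$ wherever iterates appear. Nothing here goes beyond bookkeeping, and the only point that deserves to be spelled out is a consistency check inside item (2); the rest are direct computations.

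For the first item, I would recall that ``$F$ is a lift of $f$'' means exactly $\pi\circ F=f\circ\pi$, where $\pi:\mathbb{R}\to\mathbb{T}$ is the canonical projection $x\mapsto x\sslash 1$. Hence $F(x)$ and a chosen representative of $f(\pi(x))$ project to the same point of $\mathbb{T}$, so they differ by an integer; calling that integer $k$, one has $f(\pi(x))\in J$ if and only if $F(x)+k$ lies in the corresponding lift of $J$ inside a fundamental domain, which is the assertion. For the third item, I would simply substitute $y=F_{a,b}^{n-1}(x)$ into the single-step identity $F_{a,b}(y)=\Tilde{f}_{a,b}(y\sslash 1)+2[y]$, which is nothing more than the definition of $F_{a,b}$ restated with $k=[y]$ and $y\in[k,k+1]$.

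For the second item, I would read off from the definition that $F_{a,b}(x)=\Tilde{f}_{a,b}(x\sslash 1)+2[x]$ for every $x\in\mathbb{R}$, and that $\Tilde{f}_{a,b}$ is by construction the formula for $f_{a,b}$ with the outer ``$\sslash 1$'' deleted, so $\Tilde{f}_{a,b}(y)\sslash 1=f_{a,b}(y)$ for $y\in[0,1)$; this is the step where I would note that the extra ``$+1$'' occurring in the first branch of the definition of $\Tilde{f}_{a,b}$ is an integer and hence vanishes mod $1$, so the identity holds regardless of which branch is active. Since $2[x]\in\mathbb{Z}$, reducing mod $1$ yields $F_{a,b}(x)\sslash 1=f_{a,b}(x\sslash 1)$, and the statement for $F^n_{a,b}$ then follows by induction: assuming $F_{a,b}^{n-1}(x)\sslash 1=f_{a,b}^{n-1}(x\sslash 1)$, we get $F_{a,b}^{n}(x)\sslash 1=F_{a,b}(F_{a,b}^{n-1}(x))\sslash 1=f_{a,b}(F_{a,b}^{n-1}(x)\sslash 1)=f_{a,b}(f_{a,b}^{n-1}(x\sslash 1))=f_{a,b}^{n}(x\sslash 1)$. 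For the fourth item I would induct on $n$ using item (2) of the preceding lemma, $F_{a,b}(x+k)=F_{a,b}(x)+2k$: the base case $n=1$ is precisely that identity, and for the step $F_{a,b}^{n}(x+k)=F_{a,b}^{n-1}(F_{a,b}(x)+2k)=F_{a,b}^{n-1}(F_{a,b}(x))+2^{n-1}(2k)=F_{a,b}^{n}(x)+2^{n}k$, applying the inductive hypothesis to the integer $2k$.

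I do not expect a genuine obstacle: the ``hardest'' point is the observation in item (2) that the piecewise normalization of $\Tilde{f}_{a,b}$ (chosen so that $F_{a,b}(0)\in[0,1)$) leaves the mod-$1$ reduction unchanged, and everything else is either a restatement of a definition or a routine induction.
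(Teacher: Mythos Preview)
Your proposal is correct; each item is, as you say, either a restatement of a definition or a routine induction, and your handling of the mod-$1$ bookkeeping in item~(2) is the only point that needs any care. The paper itself offers no proof of this lemma at all---it is stated as a list of observations and then immediately used---so your argument is not so much the same route as the paper's but rather the natural (and only reasonable) way to fill in what the authors left implicit.
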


\bigskip
Next lemma establishes that $f_{a,b}$ is semiconjugate to the doubling. The proof can be found  in \cite{MR}.
\begin{lemma}\label{Semiconj}
     Let 
\[
     \Phi_{a,b} (x)=\lim_{n \to \infty}\dfrac{F_{a,b}^n(x)}{2^n}\quad \text{for } x\in \mathbb R.
 \]
\begin{enumerate}
\item The limit exists uniformly in $x$.
\item $\Phi_{a,b} $ is increasing in $x$.
\item $\Phi_{a,b}(x+k)=\Phi_{a,b}(x) +k, k \in \mathbb{Z}$.
\item $\Phi_{a,b}$ semiconjugates $F_{a,b}$ by multiplication by 2, i.e.~$\Phi_{a,b}\circ F_{a,b}(x)=2\Phi_{a,b}(x)$ for $x\in \mathbb R$.
\item $\Phi_{a,b}$ is a lift of a unique $\phi_{a,b}$  that semiconjugates $f_{a,b}$ with the doubling map $D(x)=2x \sslash 1$ i.e.
\[ \phi_{a,b}\circ f=D \circ \phi_{a,b}. 
\]
\end{enumerate}

 \end{lemma}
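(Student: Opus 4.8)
The statement to prove is Lemma~\ref{Semiconj}, which asserts that the rescaled limit $\Phi_{a,b}(x) = \lim_{n\to\infty} F_{a,b}^n(x)/2^n$ exists uniformly and produces a semiconjugacy of $f_{a,b}$ with the doubling map. The author has already flagged that the proof mirrors \cite{MR}, so my plan follows the standard telescoping argument for degree-$d$ circle endomorphisms.

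\medskip

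\textbf{Plan.} First I would establish the uniform Cauchy estimate that gives existence of the limit. The key observation is that, since $F_{a,b}$ is a degree-$2$ lift, one has $F_{a,b}(x+1) = F_{a,b}(x) + 2$, so the function $x \mapsto F_{a,b}(x) - 2x$ is periodic of period $1$, hence bounded; let $M = \sup_x |F_{a,b}(x) - 2x| < \infty$ (finite by continuity and periodicity). Then I would bound the increment between consecutive terms of the defining sequence: writing $g_n(x) = F_{a,b}^n(x)/2^n$, we have
\[
|g_{n+1}(x) - g_n(x)| = \frac{1}{2^{n+1}}\bigl| F_{a,b}(F_{a,b}^n(x)) - 2 F_{a,b}^n(x) \bigr| \le \frac{M}{2^{n+1}},
\]
uniformly in $x$. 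Since $\sum_n M/2^{n+1} < \infty$, the sequence $g_n$ is uniformly Cauchy and converges uniformly to a continuous function $\Phi_{a,b}$. This proves (1).

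\medskip

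\textbf{Remaining items.} Parts (2)--(5) are then formal consequences. Monotonicity (2): each $F_{a,b}^n$ is increasing (composition of increasing maps, using the first lemma on $F_{a,b}$), so each $g_n$ is increasing, and a uniform (indeed pointwise) limit of increasing functions is increasing. The cocycle relation (3): since $F_{a,b}^n(x+k) = F_{a,b}^n(x) + 2^n k$ (the degree-$2$ property in Lemma~\ref{lift}), dividing by $2^n$ and taking $n\to\infty$ gives $\Phi_{a,b}(x+k) = \Phi_{a,b}(x) + k$. The semiconjugacy (4): compute
\[
\Phi_{a,b}(F_{a,b}(x)) = \lim_{n\to\infty} \frac{F_{a,b}^n(F_{a,b}(x))}{2^n} = \lim_{n\to\infty} \frac{F_{a,b}^{n+1}(x)}{2^n} = 2 \lim_{n\to\infty} \frac{F_{a,b}^{n+1}(x)}{2^{n+1}} = 2\Phi_{a,b}(x).
\]
Finally (5): property (3) with $k=1$ shows $\Phi_{a,b}$ descends to a well-defined continuous degree-one circle map $\phi_{a,b}: \mathbb{T}\to\mathbb{T}$, and since $F_{a,b}$ is a lift of $f_{a,b}$ while multiplication by $2$ is a lift of $D$, relation (4) descends to $\phi_{a,b}\circ f_{a,b} = D\circ\phi_{a,b}$; uniqueness of $\phi_{a,b}$ follows from uniqueness of the lift normalized by $\Phi_{a,b}(0)\in[0,1)$, which is inherited from the normalization $F_{a,b}(0)\in[0,1)$.

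\medskip

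\textbf{Main obstacle.} None of the steps are genuinely hard; the only point requiring a little care is verifying that the supremum $M = \sup_x |F_{a,b}(x) - 2x|$ is finite and can be taken uniform over the relevant parameter range if one wants joint continuity in $(a,b)$ — but since $F_{a,b}$ is continuous in $(a,b,x)$ and $1$-periodic in $x$ up to the additive $2$, the supremum over $x\in[0,1]$ is continuous in $(a,b)$, hence locally bounded, so the convergence is locally uniform in parameters as well, which is all that is needed downstream. Everything else is the classical telescoping-sum argument, so I would present the estimate in part (1) in full and treat (2)--(5) as short deductions, referring to \cite{MR} for the routine verifications.
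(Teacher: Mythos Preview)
Your proposal is correct and follows exactly the classical telescoping argument from \cite{MR} that the paper itself defers to (the paper gives no independent proof of this lemma, stating only that ``the proof can be found in \cite{MR}''). The uniform Cauchy estimate via $|F_{a,b}(y)-2y|\le M$ and the formal deductions of (2)--(5) are precisely the standard route, so nothing needs to be changed.
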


We can explain the interrelations by a
schematic diagram in Figure \ref{lifts}.

\begin{lemma}\label{cont semiconjugacy}
    For $(a, b)\in \mathcal{P}$, let $\phi_{a,b}$ be the unique semiconjugacy between $f_{a,b}$ and the doubling map. Then $(a,b,x) \mapsto \phi_{a,b}(x)$ is a continuous mapping.

\end{lemma}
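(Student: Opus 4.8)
The plan is to work with the lift and reduce everything to one uniform estimate. By Lemma~\ref{Semiconj}(5) the map $\Phi_{a,b}$ is a lift of $\phi_{a,b}$, so $\phi_{a,b}(x\sslash1)=\Phi_{a,b}(x)\sslash1$; it therefore suffices to prove that $(a,b,x)\mapsto\Phi_{a,b}(x)$ is continuous on $\mathcal P\times\R$ and then push this statement down along the projection $\R\to\T$.

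First I would observe that for each fixed $n\in\N$ the map $(a,b,x)\mapsto F_{a,b}^n(x)$ is continuous. This is an immediate induction on $n$: the case $n=1$ is item~(3) of the lemma on the basic properties of $F_{a,b}$ stated above, and the inductive step writes $F_{a,b}^n(x)=F_{a,b}\bigl(F_{a,b}^{n-1}(x)\bigr)$ as a composition of the continuous maps $(a,b,x)\mapsto\bigl(a,b,F_{a,b}^{n-1}(x)\bigr)$ and $(a,b,y)\mapsto F_{a,b}(y)$.

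The crux is to upgrade the convergence defining $\Phi_{a,b}$ so that it becomes uniform in \emph{all three} variables, whereas Lemma~\ref{Semiconj}(1) only asserts uniformity in $x$. For this I would use that $F_{a,b}(y)-2y$ is bounded by an absolute constant, uniformly over $(a,b)\in\mathcal P$ and $y\in\R$: by Lemma~\ref{lift} one has $F_{a,b}(y)-2y=\Tilde{f}_{a,b}(y\sslash1)-2(y\sslash1)$, and from the defining formula $\Tilde{f}_{a,b}(t)-2t=a+\tfrac{b}{2}S(t)+\varepsilon$ with $\varepsilon\in\{0,1\}$, $|S|\le1$, $0\le b\le1$ and $a\in[0,1)$, so $|F_{a,b}(y)-2y|\le\tfrac{5}{2}$. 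A telescoping estimate then gives, for every $(a,b)\in\mathcal P$ and $x\in\R$,
\[
\Bigl|\frac{F_{a,b}^{n+1}(x)}{2^{n+1}}-\frac{F_{a,b}^{n}(x)}{2^{n}}\Bigr|
=\frac{1}{2^{n+1}}\,\bigl|F_{a,b}\bigl(F_{a,b}^n(x)\bigr)-2F_{a,b}^n(x)\bigr|\le\frac{5}{2^{n+2}},
\]
hence $\bigl\|\Phi_{a,b}-F_{a,b}^n/2^n\bigr\|_{\infty}\le 5\cdot 2^{-n-1}$ with a bound independent of $(a,b)$. Therefore $\Phi_{a,b}(x)$ is a uniform limit on $\mathcal P\times\R$ of the jointly continuous functions $F_{a,b}^n(x)/2^n$, and is in particular jointly continuous.

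Finally I would descend to the circle: the composite $(a,b,x)\mapsto\Phi_{a,b}(x)\sslash1$ is continuous on $\mathcal P\times\R$ and, by Lemma~\ref{Semiconj}(3), constant on the fibres of the open quotient map $\mathrm{id}\times\pi\colon\mathcal P\times\R\to\mathcal P\times\T$, where $\pi\colon\R\to\T$ is the natural projection; so the induced map $(a,b,\bar x)\mapsto\phi_{a,b}(\bar x)$ on $\mathcal P\times\T$ is continuous, which is the claim. I expect the one genuine obstacle to be the passage from ``uniform in $x$'' to ``uniform in $(a,b,x)$'', which is exactly what the absolute bound on $F_{a,b}(y)-2y$ supplies; the remaining steps are routine.
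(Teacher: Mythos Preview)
Your proposal is correct and follows essentially the same approach as the paper: the paper's proof is a two-line remark that the convergence $F_{a,b}^n/2^n\to\Phi_{a,b}$ established in Lemma~\ref{Semiconj} is in fact uniform in $(a,b)$ as well as in $x$, and that together with the joint continuity of $(a,b)\mapsto f_{a,b}$ this yields the result. You have simply spelled out the two ingredients the paper leaves implicit---the absolute bound $|F_{a,b}(y)-2y|\le C$ giving the uniform Cauchy estimate, and the descent from $\Phi_{a,b}$ on $\R$ to $\phi_{a,b}$ on $\T$---so the arguments coincide up to level of detail.
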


\begin{proof}
    Since the convergence of the sequence $\{\Phi_n\}$
in the proof of Lemma \ref{Semiconj} (see \cite{MR})
is uniform independently from $(a, b)$ and since the correspondence $(a, b) \mapsto f_{a,b} $ is
continuous, the result follows.
\end{proof}
\begin{lemma}\label{periodic to periodic}
If $p$ is a periodic point of $f_{a,b}$ of period n then
$\phi_{a,b}(p)$ is a periodic point of $D$ of period $n$. Also $\phi_{a,b}$ is constant on each component of the immediate basin of attraction if $f_{a,b}$ happens to have to one. 
\end{lemma}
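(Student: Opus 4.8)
The plan is to prove the two assertions of Lemma~\ref{periodic to periodic} separately, using the semiconjugacy relation $\phi_{a,b}\circ f_{a,b}=D\circ\phi_{a,b}$ from Lemma~\ref{Semiconj}(5) together with the monotonicity and degree-one properties of the lift $\Phi_{a,b}$.

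\textbf{Periodicity is preserved.} First I would show that if $f_{a,b}^n(p)=p$ then $D^n(\phi_{a,b}(p))=\phi_{a,b}(p)$; this is immediate by iterating the semiconjugacy $n$ times, so $\phi_{a,b}(p)$ is periodic of some period $k\mid n$. The content is to rule out $k<n$. Suppose for contradiction that $q:=\phi_{a,b}(p)$ has period $k<n$ under $D$, with $k\mid n$. The key structural fact is that $\phi_{a,b}$ is a non-decreasing degree-one circle map (Lemma~\ref{Semiconj}(2),(3)), so the preimage $\phi_{a,b}^{-1}(q)$ is either a single point or a closed arc $[c,d]\subset\mathbb T$. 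Since $p\in\phi_{a,b}^{-1}(q)$ and $f_{a,b}^n(p)=p$, while $f_{a,b}^k$ maps $\phi_{a,b}^{-1}(q)$ into $\phi_{a,b}^{-1}(q)$ (again by the semiconjugacy, as $D^k(q)=q$), we get that $f_{a,b}^k$ restricts to a continuous monotone self-map of the arc $[c,d]$. If $\phi_{a,b}^{-1}(q)$ is a single point then $p$ has period dividing $k<n$, contradicting that $n$ is its period. If it is a nondegenerate arc $[c,d]$, then I claim this forces $f_{a,b}$ to have an interval of non-expansion and ultimately a non-repelling cycle whose basin is constant under $\phi_{a,b}$ --- but more simply, an attracting periodic point $p$ of period $n$ cannot lie in the interior of such an invariant arc unless the arc collapses under iteration, and the endpoints $c,d$ are themselves periodic of period dividing $k$; chasing the orbit of $p$ inside $[c,d]$ under the monotone map $f_{a,b}^k$ shows $f_{a,b}^k$ has a fixed point structure incompatible with $p$ having exact period $n>k$ and being attracting. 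The cleanest route is to invoke Lemma~\ref{0,1 in basin}: both $0$ and $1/2$ lie in the immediate basin, and $\phi_{a,b}$ collapses each basin component to a point, so the number of distinct values of $\phi_{a,b}$ on the cycle equals $n$ precisely because the $n$ basin components are disjoint and cyclically permuted; hence $k=n$.

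\textbf{The map is constant on basin components.} For the second assertion, let $U$ be a component of the immediate basin of an attracting periodic point $p$ of period $n$, so $f_{a,b}^n(U)\subset U$ and $f_{a,b}^{nj}(x)\to p$ for all $x\in U$. Applying $\phi_{a,b}$ and using continuity (Lemma~\ref{cont semiconjugacy}) together with the semiconjugacy, $\phi_{a,b}(x)=\lim_j 2^{-nj}\,(\text{something})$ --- more precisely $D^{nj}(\phi_{a,b}(x))=\phi_{a,b}(f_{a,b}^{nj}(x))\to\phi_{a,b}(p)$. But $D^{nj}$ is expanding by $2^{nj}$, so if $\phi_{a,b}(x)\neq\phi_{a,b}(x')$ for two points $x,x'\in U$ then the $D$-distance between $D^{nj}(\phi_{a,b}(x))$ and $D^{nj}(\phi_{a,b}(x'))$ would grow (until it wraps), yet both sequences converge to the same point $\phi_{a,b}(p)$ --- a contradiction once one is careful that $\phi_{a,b}(U)$ is a sub-arc not wrapping around $\mathbb T$ (which follows because $U$ has length $<1$ and $\phi_{a,b}$ is degree one and monotone). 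Hence $\phi_{a,b}$ is constant on $U$.

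\textbf{Main obstacle.} I expect the delicate point to be the first part: ruling out period drop, i.e.\ that $\phi_{a,b}$ does not identify two distinct points of the attracting cycle. The monotone-semiconjugacy framework allows $\phi_{a,b}$ to be constant on arcs (the ``plateaus''), and a priori an attracting cycle could sit inside such a plateau. The resolution leans on Lemma~\ref{0,1 in basin} and the disjointness of the $n$ immediate basin components, each of which is collapsed to a single point by the constancy statement of part two; since these $n$ points are cyclically permuted by $D$ and $D$ has no attracting cycles (it is expanding), they must be genuinely distinct, giving exact period $n$. Thus the two halves of the lemma are interlocked, and the correct order is to establish the constancy-on-basin-components statement first and then deduce the exact-period statement from it.
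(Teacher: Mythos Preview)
Your approach to the second assertion is essentially the paper's: if $\phi_{a,b}$ were nonconstant on a basin component $J$, then $D^n\circ\phi_{a,b}(J)=\phi_{a,b}\circ f_{a,b}^n(J)$ yields a contradiction since the left side eventually covers $\mathbb{T}$ while the right side accumulates on a finite cycle. This is exactly the paper's argument, only phrased pointwise.

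For the first assertion there is a real gap. The lemma is stated for an \emph{arbitrary} periodic point $p$, not only an attracting one; the paper handles the general case by deferring to Lemma~3.2 of \cite{MR}. Your ``cleanest route'' and the entire ``Main obstacle'' paragraph assume $p$ is attracting (they go through Lemma~\ref{0,1 in basin} and immediate basins), so they do not prove the lemma as written. Even restricted to attracting $p$, the step ``the $n$ basin components are collapsed by $\phi_{a,b}$ to $n$ \emph{distinct} points'' still needs justification: disjointness of the components does not by itself prevent $\phi_{a,b}$ from being constant on a larger arc containing several of them.

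Ironically, the approach you abandoned is the correct one and works for any periodic $p$. Once you know $f_{a,b}^k$ maps the plateau $\phi_{a,b}^{-1}(q)=[c,d]$ into itself, the restriction $f_{a,b}^k|_{[c,d]}$ is an \emph{increasing} self-map of the interval $[c,d]$: via the lift one checks that the image $[F_{a,b}^k(\tilde c),F_{a,b}^k(\tilde d)]$ lies in a single integer translate of $[\tilde c,\tilde d]$ (it cannot cross a gap of length $1-|[c,d]|>0$), so no wrap-around occurs. But an increasing self-map of an interval has no periodic orbits of period larger than one; hence $f_{a,b}^k(p)=p$, contradicting that $p$ has exact period $n>k$. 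Your own sentence ``chasing the orbit of $p$ inside $[c,d]$ under the monotone map $f_{a,b}^k$ shows $f_{a,b}^k$ has a fixed point structure incompatible with $p$ having exact period $n>k$'' is precisely this observation; the appended clause ``and being attracting'' is unnecessary and is what led you to discard the right line in favour of the basin argument.
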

\begin{proof}
It follows from semiconjugacy that, $\phi_{a,b}\circ f^n_{a,b}=D^n\circ\phi_{a,b}$. Assume $p$ is a periodic point of $f_{a,b}$ of period $n$, then $f_{a,b}^n(p)=p$ and $\phi_{a,b}(p)=D^n(\phi_{a,b}(p))$. Then follow the proof of Lemma 3.2 of \cite{MR}. \\
\indent Assume $J$ to be some component of the basin of attraction containing the attracting cycle. Say $\phi_{a,b}$ is not constant on $J$ then $\phi_{a,b}(J)$ is non trivial interval.  Since $\phi_{a,b}\circ f^n_{a,b}(J)=D^n\circ\phi_{a,b}(J)$, the right hand side then eventually covers the whole $\mathbb{T}$ for large $n$ and the left hand side accumulates on a cycle, hence contradiction.
\end{proof}

\begin{center}
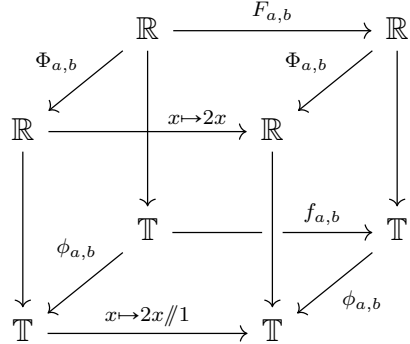

 \begin{tikzcd}
& \mathbb{R} \arrow[dl,swap,"\Phi_{a,b}"] \arrow[rr,"F_{a,b}"] \arrow[dd] & & \mathbb{R}\arrow[dl,swap,"\Phi_{a,b}"] \arrow[dd] \\
\mathbb{R} \arrow[rr,near end, "x \mapsto 2x"] \arrow[dd] & & \mathbb{R} \\
& \mathbb{T} \arrow[dl,swap,near start, "\phi_{a,b}"] \arrow[rr,near end,"f_{a,b}"] & & \mathbb{T} \arrow[dl,"\phi_{a,b}"] \\
\mathbb{T} \arrow[rr,"x \mapsto 2x\sslash1 "] & & \mathbb{T} \arrow[from=uu, crossing over]\\
\end{tikzcd}
\captionof{figure}{\small Schematic diagram of semiconjugacy}\label{lifts}
\end{center}

\vspace{.4cm}

Our goal now is to  define \textit{type} for PLPDM family by importing similar ideas from \cite{MR}.  Let $S$ be a component  corresponding to a single minus itinerary $\sigma=(\sigma_0,\ldots,\sigma_{p-1})$ for the cycle $(x_0,...,x_{p-1})$ with $\sigma_0=-$. Then by Lemma \ref{periodic to periodic}, $\phi_{a,b}(x_0)$ is a periodic point of $D$ of period $p$, hence of the form $k/(2^p-1)$ with $ k \in \{0,\ldots 2^p-2\}$. Let us call $x_0$ as $x_0(a,b)$ to indicate the dependence on $a,b$. Then, 
    \begin{itemize}
        \item $x_0(a,b) \in I_-$,
        \item $f_{a,b}^p(x_0(a,b))=x_0(a,b)$,
        \item $f_{a,b}^i(x_0(a,b)) \neq x_0(a,b)\text{ for }  i=1,\ldots,p-1$.
    \end{itemize}
   
We will first show the map, \[
\Xi: S \to \mathbb{T} ; \Xi(a,b)= x_0(a,b)
\]
is continuous. 
\begin{claim}\label{imlicit}
    $\Xi$ is continuous.
\end{claim}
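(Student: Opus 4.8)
The plan is to realize $x_0(a,b)$ as the solution of an implicit equation and then invoke continuity of the data. Fix a parameter $(a_*,b_*)\in S$ with single-minus itinerary $\sigma=(\sigma_0,\dots,\sigma_{p-1})$, $\sigma_0=-$, and attracting cycle $x_0(a_*,b_*),\dots,x_{p-1}(a_*,b_*)$. On a neighbourhood $U$ of $(a_*,b_*)$ inside $S$ the itinerary is locally constant (this uses that the orbit avoids the break points $0,1/2$ and that the $f_{a,b}$-images of the cycle points vary continuously, so the open conditions $x_i\in \operatorname{int} I_{\sigma_i}$ persist). Along a fixed itinerary $\sigma$ the map $f_{a,b}^p$ restricted to the relevant branch is an explicit affine function of $x$, with slope $B^{-}\!\cdot\! (B^{+})^{p-1}$ and an intercept that depends affinely and continuously on $(a,b)$ — one reads this off the formulas $f_{a,b}(x)=(B^\pm x+A^\pm)\sslash 1$ together with the third bullet of Lemma \ref{lift}, which pins down the integer translations appearing in $F_{a,b}^p$. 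Since $\sigma$ has exactly one minus and $b<1$ on a hyperbolic component, this slope is $2(1+b_*)^{p-1}\cdot 2(1-b_*)$; I will note that the cycle being attracting forces this product to lie in $(0,1)$, so in particular the affine map $x\mapsto f_{a,b}^p(x)$ has slope $\neq 1$ near $(a_*,b_*)$.

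Next I would solve the fixed-point equation $F_{a,b}^p(x)=x+m$ (with $m\in\mathbb Z$ the appropriate degree-related integer, locally constant on $U$) for $x$. Because the left side is affine in $x$ with slope bounded away from $1$ and the coefficients depend continuously on $(a,b)$, the unique solution $x=x_0(a,b)$ is a continuous — indeed locally an explicit rational — function of $(a,b)$ on $U$; projecting to $\mathbb T$ gives continuity of $\Xi$ at $(a_*,b_*)$. Alternatively, and perhaps cleaner to write, one can avoid choosing $m$ by hand: by Lemma \ref{periodic to periodic} the semiconjugacy value $\phi_{a,b}(x_0(a,b))$ equals a fixed point $k/(2^p-1)$ of $D$ of exact period $p$, and this combinatorial datum $k$ is constant on the connected set $S$ (it cannot jump, since $(a,b)\mapsto\phi_{a,b}$ is continuous by Lemma \ref{cont semiconjugacy} and the target values $k/(2^p-1)$ form a discrete set, while $x_0(a,b)$ is the unique cycle point in $I_-$ realizing itinerary $\sigma$). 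Then $x_0(a,b)$ is characterized as the unique point of $I_-$ with $\phi_{a,b}(x_0(a,b))=k/(2^p-1)$ lying on that particular branch, and continuity of $\Xi$ follows from continuity of $\phi_{a,b}$ in $(a,b,x)$ together with the implicit-function-type argument above applied to the genuinely affine map $f_{a,b}^p$.

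The main obstacle is bookkeeping the integer parts: $F_{a,b}^p$ is affine only after one fixes, for each $j<p$, the integer $[F_{a,b}^{j}(x)]$ that enters the recursion in Lemma \ref{lift}, and one must check these integers stay constant as $(a,b)$ varies in a small neighbourhood and as $x$ ranges over a small interval around $x_0$. This is exactly where local constancy of the itinerary plus the fact that $x_0(a_*,b_*)$ and its iterates avoid $\mathbb Z$-translates of $0$ and $1/2$ is used: these are finitely many open conditions, so they survive a small perturbation, and on their common domain of validity $x_0(a,b)$ is literally given by a fixed rational formula in $(a,b)$. Everything else — uniqueness of the cycle point in $I_-$ with the given itinerary (which follows from Theorem \ref{uniqueness of periodic attractor}), and continuity of the building blocks — is already available from the lemmas above, so the argument reduces to assembling these pieces.
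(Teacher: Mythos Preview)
Your primary argument is correct and takes a more elementary route than the paper's. The paper applies a topological implicit function theorem (Theorem~4.1 of \cite{BGD}) to $\Theta(a,b,x)=f_{a,b}^p(x)-x$, verifying the local-degree hypothesis by noting that $\Theta(a,b,\cdot)$ is a local homeomorphism near $x_0$ (slope $<1$). You instead exploit the piecewise-linear structure directly: since the attracting orbit avoids the break points, the itinerary and the integer parts $[F_{a,b}^j(x)]$ are locally constant in $(a,b,x)$, so on a small product neighbourhood $F_{a,b}^p$ is genuinely affine in $x$ with slope $B^-(B^+)^{p-1}<1$ and intercept continuous in $(a,b)$; the fixed-point equation then solves explicitly for $x_0(a,b)$ as a rational function of $(a,b)$. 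Your route is self-contained (no external reference needed) and in fact gives a little more, namely local real-analyticity of $\Xi$; the paper's argument is less computational and would transfer unchanged to non-affine perturbations of the doubling map.

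One caution about your ``alternative'' via the semiconjugacy: as written it is circular. To argue that the combinatorial label $k$ is constant on $S$ you need continuity of $(a,b)\mapsto \phi_{a,b}(x_0(a,b))$, and composing with the joint continuity of $\phi$ from Lemma~\ref{cont semiconjugacy} already requires continuity of $(a,b)\mapsto x_0(a,b)$, which is exactly the claim. Indeed the paper deduces constancy of $k$ \emph{from} Claim~\ref{imlicit} in Theorem~\ref{Tongue open}, not the other way around. So the alternative does not stand on its own; keep the direct affine solution as your proof.
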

    \begin{proof}
        We use Theorem 4.1 of \cite{BGD} (implicit function theorem for continuous functions) to show this. Since $\mathbb{T} \times I$ is path connected and and $\mathbb{T}$ is oriented manifold. Now define 
        \[
        \Theta : (\mathbb{T} \times I) \times \mathbb{T} \to \mathbb{T}; \text{ by, }
        \Theta(a,b,x)=f_{a,b}^p(x)-x.
        \]
        The map $\Theta$ is continuous and $\Theta(a,b,x_0(a,b))=0$ if  $(a,b) \in S $.
        Now for all $(a,b)$,  $x \mapsto \Theta(a,b,x)$ is open, discrete (see \cite{BGD} for definition). Now, as $f_{a,b}^p$ has slope $<1$ on some neighbourhood $V(x_0(a,b)) \subset \mathbb{T}$ around $x_0(a,b)$ so we have that $\Theta|_{V(x_0(a,b))}: V(x_0(a,b)) \to \mathbb{T}$ is a linear homeomorphism onto its image. This means that deg$((\Theta_{a,b}), x_{a,b})=1$ (although it should be $(\Theta_{(a,b)}, x_{a,b})$ by following the notation in \cite{BGD}. The reader can refer to the remark in the paragraph above the Proposition 2.30 in \cite{AlH}). Then by Theorem 4.1 of \cite{BGD} there exists a neighbourhood $U_{a,b} \subset \mathbb{T} \times I$ for which there is some continuous function $X: U_{a,b} \to \mathbb{T}$
        \vspace{.1cm}
        
        \noindent so that $\Theta(\Tilde{a},\Tilde{b},X(\Tilde{a},\Tilde{b}))=\Theta(a,b,X(a,b))=0$, or, $f^p_{\Tilde{a},\Tilde{b}}(X(\Tilde{a},\Tilde{b}))=X(\Tilde{a},\Tilde{b})$ for all $(\Tilde{a},\Tilde{b}) \in U_{a,b}$. The other $p-1$ conditions are satisfied by choosing $U_{a,b}$ small enough by using continuity. We are now done with our claim.
\end{proof}

\noindent The following theorem makes it possible to define a \textit{type} for tongues.
\begin{theorem}\label{Tongue open}
    If $S$ is a tongue of  order $p$ with single minus itinerary  $\sigma=(\sigma_0,\ldots,\sigma_{p-1})$ with $\sigma_0=-$. Then the following  map is constant.
    \[
    \tau : S \to \mathbb{T}, \tau(a,b)=\phi_{a,b}(x_0)
    \]
    where $(x_0,x_1,...,x_{p-1})\in \mathbb{T}^p$ is the unique periodic orbit of exact period $p$ of $f_{a,b}$ and has a single minus itinerary with $x_0 \in I_-$.
\end{theorem}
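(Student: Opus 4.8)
The plan is to show that $\tau$ is locally constant on the connected set $S$, which forces it to be constant. By Claim \ref{imlicit}, the map $\Xi(a,b) = x_0(a,b)$ is continuous on $S$, and by Lemma \ref{cont semiconjugacy} the assignment $(a,b,x)\mapsto \phi_{a,b}(x)$ is continuous; composing, $\tau = (a,b)\mapsto \phi_{a,b}(\Xi(a,b))$ is continuous on $S$. On the other hand, by Lemma \ref{periodic to periodic}, for each $(a,b)\in S$ the point $\phi_{a,b}(x_0(a,b))$ is a periodic point of the doubling map $D$ of exact period $p$ (same period, since $x_0$ has exact period $p$ and $\phi_{a,b}$ intertwines $f_{a,b}$ with $D$ — one checks the period cannot drop using that $\phi_{a,b}$ is a lift-type semiconjugacy, exactly as in Lemma 3.2 of \cite{MR}). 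Hence $\tau(S)$ is contained in the \emph{finite} set $\{k/(2^p-1): k = 0,\ldots,2^p-2\}$ of period-$p$ points of $D$. A continuous function from a connected space into a finite (hence totally disconnected) space is constant, so $\tau$ is constant.

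The only point that needs a little care is the claim that the period of $\phi_{a,b}(x_0(a,b))$ under $D$ is exactly $p$ and not a proper divisor. Suppose $D^i(\tau(a,b)) = \tau(a,b)$ for some $0<i<p$ dividing $p$. Using $\phi_{a,b}\circ f_{a,b} = D\circ\phi_{a,b}$ and that $\phi_{a,b}$ is monotone of degree one, one argues — as in \cite{MR} — that $\phi_{a,b}$ cannot collapse an orbit of exact period $p$ onto one of smaller period while remaining a semiconjugacy with the expanding map $D$: a shorter period on the image side would force the semiconjugacy to be non-constant on an interval that eventually covers $\mathbb{T}$, contradicting that the $f_{a,b}$-orbit accumulates on a finite cycle (this is precisely the mechanism already used in the second paragraph of the proof of Lemma \ref{periodic to periodic}). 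I would simply cite Lemma \ref{periodic to periodic} and \cite[Lemma 3.2]{MR} for this.

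The main (very mild) obstacle is bookkeeping: one must make sure $\Xi$ is single-valued and well-defined on all of $S$, i.e.\ that the period-$p$ orbit with single-minus itinerary and $x_0\in I_-$ is genuinely unique. This is where the hypotheses pay off: Theorem \ref{uniqueness of periodic attractor} gives uniqueness of the attracting cycle on $\mathcal{H}\supseteq S$, and the prescription ``$x_0\in I_-$ with $\sigma_0=-$'' together with the single-minus condition pins down a unique point of that cycle, so $x_0(a,b)$ is unambiguous and $\Xi$ is a well-defined continuous function by Claim \ref{imlicit}. Once $\Xi$ is in hand, the rest is the connectedness-into-a-discrete-set argument above. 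Thus $\tau$ is constant on $S$, which licenses defining the \emph{type} of the tongue $S$ to be the common value $\tau(S) = k/(2^p-1)$, or equivalently the integer $k\in\{0,\ldots,2^p-2\}$.
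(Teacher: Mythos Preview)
Your proof is correct and follows essentially the same approach as the paper: continuity of $\tau$ (via Claim \ref{imlicit} and Lemma \ref{cont semiconjugacy}), discreteness of the image (via Lemma \ref{periodic to periodic}), and connectedness of $S$ force $\tau$ to be constant. The paper's own proof is the same argument compressed into three sentences; your additional remarks on well-definedness and exactness of the period are just making explicit what the paper leaves to the cited lemmas.
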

\begin{proof}
  
The well definedness of the map is clear from Lemma \ref{periodic to periodic}. Notice that  $S$ is connected and codomain is the discrete set, $\{k/(2^p-1): 0 \leq k \leq 2^p-2 \}$. We can now conclude that $(a,b) \mapsto \phi_{a,b}(x_0(a,b))$ is constant using Lemma \ref{cont semiconjugacy}.
\end{proof}

\begin{definition}[Type]
By Theorem \ref{uniqueness of periodic attractor},     for $(a,b) \in \mathbb{T}\times [0,1]$, $f_{a,b}$ has  atmost one attracting periodic orbit $ (x_0,x_1,...,x_{p-1})\in \mathbb{T}^p$ of some exact period
    $p \in \mathbb{N}$ with a single minus itinerary with $x_0 \in I_-$. Let $\phi_{a,b}$ be the semiconjugacy with the doubling map and let $\tau=\phi_{a,b}(x_0)$. Then the orbit $(x_0,x_1,...,x_{p-1})$ is said to have type $\tau$. In this case we will also say that  the parameter $(a, b)$ have type $\tau$.  \\
   \indent 
   By Theorem \ref{Tongue open}, type $\tau$ remains constant for parameters inside a given tongue. Define this constant type $\tau$ as the type of the tongue.  

\end{definition}

\noindent Observe that using ideas from Claim \ref{imlicit}, the following is immediate.
\begin{proposition}\label{stable componenet open}
    $\mathcal{H}$ is open in $\mathbb{T}\times I.$
\end{proposition}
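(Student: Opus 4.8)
The plan is to mimic the argument behind Claim \ref{imlicit}, but now run the implicit function theorem for continuous functions over a neighbourhood in the full parameter space $\mathbb{T}\times I$ rather than inside a fixed component $S$. Fix $(a_0,b_0)\in\mathcal{H}$. By Theorem \ref{uniqueness of periodic attractor} the map $f_{a_0,b_0}$ has a unique attracting cycle; call it $x_0,x_1,\dots,x_{p-1}$ with $p$ the exact period and (reordering) $x_0\in I_-$. Because the orbit is attracting and $f_{a_0,b_0}$ is piecewise linear with constant slope on each of $I_\pm$, the orbit must avoid the break points $0$ and $1/2$, so each $x_i$ lies in the interior of $I_{\sigma_i}$ for its itinerary symbol $\sigma_i$; hence $f_{a_0,b_0}^p$ is locally affine near $x_0$ with slope $B^{-^m}B^{+^n}<1$, where $m\ge 1$ is the number of minuses and $n$ the number of pluses.

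First I would set $\Theta:(\mathbb{T}\times I)\times\mathbb{T}\to\mathbb{T}$, $\Theta(a,b,x)=f_{a,b}^p(x)-x$, which is continuous, satisfies $\Theta(a_0,b_0,x_0)=0$, and for each fixed $(a,b)$ is open and discrete in $x$ (as in the proof of Claim \ref{imlicit}, citing \cite{BGD}). Since $f_{a_0,b_0}^p$ has slope strictly less than $1$ on a neighbourhood $V(x_0)\subset\mathbb{T}$, the restriction $\Theta(a_0,b_0,\cdot)|_{V(x_0)}$ is an affine homeomorphism onto its image, so the local degree $\deg(\Theta_{(a_0,b_0)},x_0)=1$. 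Applying Theorem 4.1 of \cite{BGD} yields a neighbourhood $U\subset\mathbb{T}\times I$ of $(a_0,b_0)$ and a continuous $X:U\to\mathbb{T}$ with $X(a_0,b_0)=x_0$ and $f_{a,b}^p(X(a,b))=X(a,b)$ for all $(a,b)\in U$. Shrinking $U$ if necessary, continuity guarantees that $X(a,b)$ still lies in the interior of $I_{\sigma_0}$, that the iterates $f_{a,b}^i(X(a,b))$ for $i=1,\dots,p-1$ remain in the interiors of the respective $I_{\sigma_i}$ and stay away from $X(a,b)$ (so $p$ is still the exact period), and that the slope of $f_{a,b}^p$ at $X(a,b)$, which equals the constant $B^{-^m}B^{+^n}$ computed from the itinerary, remains $<1$. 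Thus $X(a,b)$ is an attracting periodic point of $f_{a,b}$ for every $(a,b)\in U$, i.e. $U\subset\mathcal{H}$, proving $\mathcal{H}$ is open.

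The one genuinely delicate point—the main obstacle—is verifying that the perturbed fixed point $X(a,b)$ continues to be \emph{attracting}, not merely periodic. In the $C^1$ setting of \cite{MR} this is automatic from continuity of the derivative, but here $f_{a,b}$ is only piecewise linear and the derivative jumps at $0$ and $1/2$; the key observation that rescues the argument is precisely that the multiplier of a $p$-periodic orbit is the \emph{piecewise-constant} quantity $B^{-^m}B^{+^n}$ (the Remark after the Itinerary definition), which depends on $(a,b)$ only through the combinatorics $(m,n)$ of the itinerary. Hence as long as the itinerary is locally constant—which holds on a small enough $U$ because the orbit points sit in the open intervals $I_\pm$ away from the break points—the multiplier is literally unchanged, so $f_{a,b}^p$ remains a local contraction at $X(a,b)$. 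One should also note the trivial case: if $(a_0,b_0)$ has an attracting fixed point, then since $f$ has constant slope $B^-=2(1-b)<2$ on $I_-$ and the fixed point lies in $I_-$, openness is immediate from continuity of $f_{a,b}$ in $(a,b,x)$ together with $b_0>0$. Combining the two cases gives the proposition.
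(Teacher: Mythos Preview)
Your argument is exactly the one the paper has in mind: it records nothing beyond ``using ideas from Claim \ref{imlicit}, the following is immediate'', and you have unpacked precisely those ideas (run the continuous implicit function theorem of \cite{BGD} at an arbitrary $(a_0,b_0)\in\mathcal{H}$, then check the persisted fixed point is still attracting).

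One correction is needed. You assert that the multiplier $B^{-^m}B^{+^n}$ ``depends on $(a,b)$ only through the combinatorics $(m,n)$'' and is therefore ``literally unchanged'' once the itinerary is fixed. That is not true: $B^\pm=2(1\pm b)$ depend on $b$, so the multiplier $(2(1-b))^m(2(1+b))^n$ moves with $b$ even for frozen combinatorics. The repair is immediate and does not disturb your structure: for fixed $(m,n)$ this quantity is a polynomial in $b$, hence continuous, so the condition $(2(1-b))^m(2(1+b))^n<1$ is open in $b$; combined with the local constancy of the itinerary (which you justify correctly by keeping the orbit in the interiors of the $I_{\sigma_i}$), this yields the open neighbourhood $U\subset\mathcal{H}$ you want. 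With that sentence amended, the proof is correct and coincides with the paper's intended argument.
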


\section{Existence of Tongues}

In this section will prove the existence of parameters with single minus attracting cycle of order 3 and 4. First we do the simpler period 3 case. We  start by proving a lemma.
 \begin{lemma}\label{no only minus}
     Let $\sigma=(-,\sigma_1,\sigma_2,...,\sigma_p) $ be the itinerary of an attracting periodic orbit of period p. Then either p = 1 or there is at least one $+$ in $\sigma=(-,\sigma_1,\sigma_2,...,\sigma_p) $
 \end{lemma}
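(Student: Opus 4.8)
The plan is to rule out a periodic itinerary consisting only of minuses for any attracting periodic orbit of period $p\ge 2$. The key observation is that on $I_-=[1/2,1]$ the map $f_{a,b}$ has constant slope $B^-=2(1-b)$. So if the orbit $x_0,\dots,x_{p-1}$ has itinerary $(-,-,\dots,-)$ (all minuses), then every $x_j\in I_-$, and the multiplier of $f_{a,b}^p$ at $x_0$ is exactly $(B^-)^p=\bigl(2(1-b)\bigr)^p$. For this orbit to be attracting we need $(B^-)^p<1$, i.e.\ $B^-<1$, i.e.\ $b>1/2$; in particular $B^-=2(1-b)\in(0,1)$ is allowed, so the multiplier condition alone does not give a contradiction. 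The contradiction must instead come from the fact that $f_{a,b}$ restricted to $I_-$ is a single affine branch that cannot carry a periodic orbit of period $\ge 2$ entirely inside $I_-$.

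First I would set up the affine description: on $I_-$ we have $f_{a,b}(x)=(B^-x+A^-)\sslash 1$ with $A^-=a+3b/2$. Suppose for contradiction $p\ge 2$ and all $x_j\in I_-$. Then for each $j$, $f_{a,b}(x_j)=B^-x_j+A^--n_j$ for some integer $n_j$ (the integer part subtracted off). Composing, $f_{a,b}^p(x_0)=(B^-)^p x_0 + (\text{constant depending on }A^-,B^-,\text{the }n_j)$. Since $x_0$ is fixed by $f_{a,b}^p$ and $(B^-)^p\neq 1$, this determines $x_0$ uniquely as the fixed point of an affine contraction; call it $x_\ast$. The same affine map (with the same choice of the $n_j$'s) has $x_\ast$ as its unique fixed point, and under iteration of this single affine map every point of $I_-$ near $x_0$ converges monotonically to $x_\ast$. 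But then, tracking the orbit, $x_1=f_{a,b}(x_0)$ must also be the fixed point of the corresponding cyclic shift of the affine map, and by uniqueness of the attracting cycle (Theorem~\ref{uniqueness of periodic attractor}) and the structure of a single affine branch one forces all $x_j$ equal, contradicting exact period $p\ge 2$.

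A cleaner way to package the last step, which I would actually use: since $f_{a,b}|_{I_-}$ is affine with slope $B^-<1$ (necessary for attraction), it is a contraction on $I_-$ in the following sense — if $x,y\in I_-$ and $f_{a,b}(x),f_{a,b}(y)\in I_-$ with the \emph{same} branch of $\sslash 1$, then $|f_{a,b}(x)-f_{a,b}(y)|=B^-|x-y|$. But in general $f_{a,b}(I_-)$ can wrap around the circle, so different points of $I_-$ can land in $I_-$ via different integer shifts. To handle this, note that $|I_-|=1/2$ and $f_{a,b}$ expands total length by the factor $B^-$ only if $B^-<1$... actually $f_{a,b}$ maps $I_-$ onto an arc of length $B^-/2 <1/2<1$, so $f_{a,b}|_{I_-}$ is injective. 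Thus $f_{a,b}|_{I_-}\colon I_-\to \mathbb T$ is an injective affine contraction onto an arc of length $<1$. If the whole orbit stays in $I_-$, then $f_{a,b}^p|_{I_-}$ is a composition of injective affine contractions, hence an injective contraction of $I_-$ into $\mathbb T$; restricting to the sub-arc where it maps into $I_-$ it is a genuine contraction, so it has a unique fixed point, and the orbit $\{x_0,\dots,x_{p-1}\}$ being contained in this picture collapses: one shows $f_{a,b}(x_\ast)=x_\ast$ where $x_\ast$ is that fixed point, forcing $p=1$.

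The main obstacle I anticipate is the bookkeeping of the integer shifts (the $n_j$) and making rigorous the claim that "a composition of injective affine contractions of arcs has a unique fixed point and no nontrivial periodic points inside $I_-$" — one has to be careful that the relevant composition is defined on a connected sub-arc of $I_-$ and is genuinely contracting there (slope $(B^-)^p<1$), so Banach-type uniqueness applies and rules out a $p$-cycle with $p\ge2$. Once that is set up the conclusion is immediate. Alternatively, and perhaps most economically, I would invoke the already-proved Lemma~\ref{0.5 in closure} / Lemma~\ref{0,1 in basin}: if the itinerary is all minus, $1/2$ lies in the closure of the immediate basin, and one examines the branch near $1/2$ on $I_-$ to derive that the orbit cannot have exact period $\ge 2$ while staying in $I_-$; but the self-contained affine-contraction argument above seems the most transparent, so that is the route I would take.
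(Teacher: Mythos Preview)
Your contraction approach is correct and is a genuinely different route from the paper's proof. The paper argues by a three-case cobweb analysis keyed on the position of $f_{a,b}(1/2)$: if $f_{a,b}(1/2)\in I_+$, or if $f_{a,b}(1/2)\in I_-$ but the graph of $f_{a,b}|_{I_-}$ misses the diagonal, then every orbit in $I_-$ is monotone and exits $I_-$ in finitely many steps; if $f_{a,b}(1/2)\in I_-$ and the graph meets the diagonal, then all of $I_-$ is the basin of an attracting fixed point. Either way no attracting cycle of exact period $\ge 2$ can live entirely in $I_-$. Your argument instead notes that attractiveness forces $B^-<1$ and then uses that $f_{a,b}|_{I_-}$ is a $B^-$-contraction for the circle metric. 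The branch/integer-shift worry you flag actually dissolves: for $x,y\in I_-$ one has $|x-y|\le 1/2$ and $|B^-(x-y)|<1/2$, so $d_{\mathbb T}(f_{a,b}(x),f_{a,b}(y))=B^-\,d_{\mathbb T}(x,y)$ with no ambiguity. Since every $x_i$ in the putative cycle is fixed by $f_{a,b}^p$ and all iterates stay in $I_-$, one gets $d_{\mathbb T}(x_i,x_j)=(B^-)^p\,d_{\mathbb T}(x_i,x_j)$, forcing $x_i=x_j$. This is case-free and arguably more transparent; the paper's version, by contrast, yields a finer dynamical picture (with cobweb figures) of \emph{how} orbits leave $I_-$. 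Your first sketch invoking Theorem~\ref{uniqueness of periodic attractor} is unnecessary and does not by itself force $p=1$; drop it and go straight to the metric contraction argument.
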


\begin{proof}
The proof is by dividing into three cases.
\begin{itemize}
    \item[Case 1:] Let $f_{a,b}(1/2)\in I_+$, then orbit of any point $x \in I_-$ monotonically decreases  to enter $I_+$ after finitely many iterations (otherwise $f_{a,b}$ will have a fixed point in $I_-$, which is not possible in this case).
    
    \item[Case 2:] Assume $f_{a,b}(1/2)\in I_-$ and $f_{a,b}$ intersects the diagonal. In this case whole $I_-$ is the immediate basin of attraction of this attracting fixed point. So given $x \in I_-$ its orbit is an infinite sequence of distinct points  converging to the attracting fixed point. 
    
    \item[Case 3:] Assume $f_{a,b}(1/2)\in I_-$ and $f_{a,b}$ does not intersect the diagonal. In this case, for $x \in I_-$ either $f_{a,b}(x) \in I_+$ or the orbit of $x$ monotonically increases to jump backwards in $I_+$ after finitely many steps (otherwise, $f_{a,b}$ has a fixed point in $I_-$, which is not possible).     
\end{itemize}
The cases are illustrated in the figure \ref{cases}.

\end{proof}

\begin{figure}[hbt!]
    \centering
    
    % First figure
    \begin{subfigure}[t]{0.4\textwidth}
        \centering
        \includegraphics[width=\textwidth]{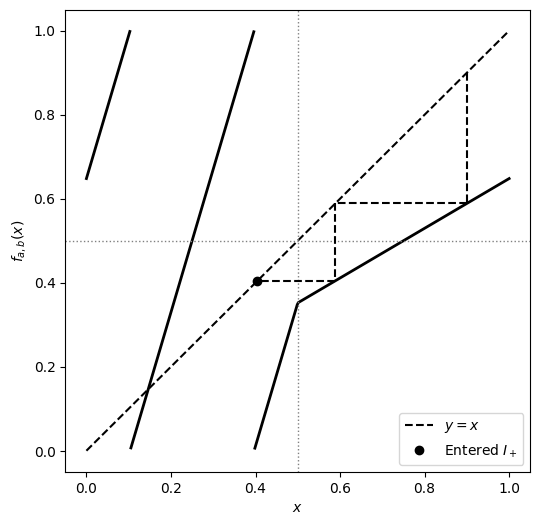}
        \caption{Case 1}
    \end{subfigure}
    \hfill
    % Second figure
    \begin{subfigure}[t]{0.4\textwidth}
        \centering
        \includegraphics[width=\textwidth]{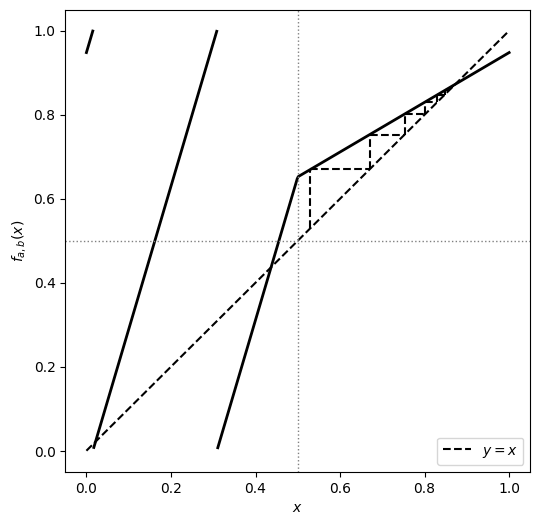}
        \caption{Case 2}
    \end{subfigure}
    \hfill
    % Third figure
    \begin{subfigure}[t]{0.4\textwidth}
        \centering
        \includegraphics[width=\textwidth]{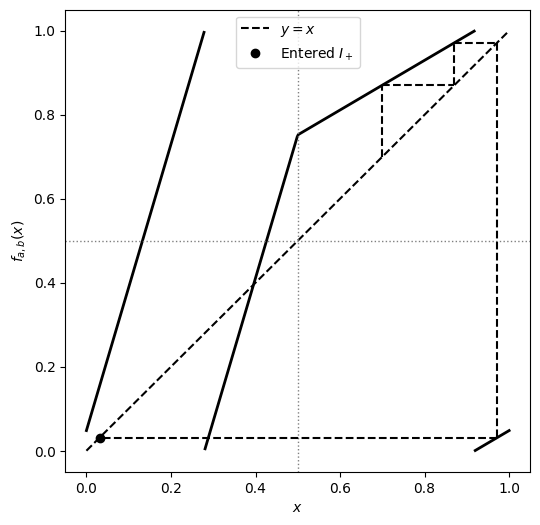}
        \caption{Case 3}
    \end{subfigure}
    
    \caption{Cobweb diagrams for three different cases.}
    \label{cases}
\end{figure}

\begin{corollary}
     Let $f$ in PLPDM family such that $f(I_-)\subset I_-$, then $f$ has  no attracting periodic point of period greater than or equal to 2.
\end{corollary}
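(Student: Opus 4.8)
The plan is to show that if $f = f_{a,b}$ satisfies $f(I_-) \subset I_-$, then $f$ has at most one attracting cycle and it must be a fixed point, which immediately gives the corollary since no attracting periodic point of period $\geq 2$ can then exist. The hypothesis $f(I_-) \subset I_-$ means in particular that $f(1/2) \in I_-$, so we are in Case 2 or Case 3 of Lemma \ref{no only minus}. The key observation is that under this hypothesis the restriction $f|_{I_-} : I_- \to I_-$ is a continuous self-map of the closed interval $I_-$ with constant slope $B^- = 2(1-b) < 2$; more importantly, once an orbit enters $I_-$ it never leaves, so the itinerary of any point eventually becomes the constant string of $-$'s.

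First I would argue that any attracting periodic orbit, having at least one $-$ in its itinerary (as recorded in the Definition of Itinerary), must eventually land in $I_-$ and then stay there forever by the invariance hypothesis; hence its itinerary is eventually all $-$, so the orbit actually lies entirely in $I_-$ (a periodic orbit whose itinerary is eventually constant is in fact constant). Thus every attracting cycle is a cycle of $f|_{I_-}$. Next, since $f|_{I_-}$ has constant slope, $f^n|_{I_-}$ has constant slope $(B^-)^n = 2^n(1-b)^n$ on any subinterval of $I_-$ all of whose first $n-1$ iterates stay in $I_-$ — which, again by invariance, is automatic. So $f^n|_{I_-}$ is affine with slope $2^n(1-b)^n$ everywhere on $I_-$. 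An attracting periodic point of period $n$ would be an attracting fixed point of this affine map, forcing $2^n(1-b)^n < 1$; but an affine map of a compact interval into itself with any slope has a unique fixed point, and if $n \geq 2$ this fixed point would also have to be a fixed point of $f|_{I_-}$ itself (because the unique fixed point of $f|_{I_-}$ is automatically a fixed point of $f^n|_{I_-}$, and $f^n|_{I_-}$ being affine with the single contracting fixed point has no other periodic points of $f$). Hence the period is $1$, contradicting $n \geq 2$.

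Alternatively, and perhaps more cleanly, I would invoke Theorem \ref{uniqueness of periodic attractor}: if $f$ had an attracting cycle of period $\geq 2$, it would be the unique attracting cycle, and by Lemma \ref{0,1 in basin} the break point $0$ lies in its immediate basin; but $0 \in I_+$ and its forward orbit enters $I_-$ (since the itinerary of the attracting orbit has a $-$ and, by the structure in Case 2/Case 3 of Lemma \ref{no only minus}, every point of $I_-$ converges monotonically to the fixed point of $f|_{I_-}$), so the orbit of $0$ must converge to an attracting fixed point inside $I_-$, not to a cycle of period $\geq 2$ — contradiction. Either way the conclusion follows.

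The main obstacle is making rigorous the step that an attracting \emph{cycle} — not merely the basin of a single point — cannot escape $I_-$ and hence must reduce to a fixed point; concretely, one must rule out an attracting orbit that visits $I_+$ at some iterates yet is still attracting. This is exactly handled by combining the invariance $f(I_-)\subset I_-$ with the monotone-convergence description in the three cases of Lemma \ref{no only minus}: in Case 2 and Case 3, every point of $I_-$ — and hence every point that ever enters $I_-$ — is drawn monotonically to the unique fixed point of $f|_{I_-}$, leaving no room for a nontrivial periodic cycle to be attracting.
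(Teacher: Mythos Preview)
Your first two paragraphs contain a correct proof whose core idea matches the paper's: an attracting cycle must have a point in $I_-$, forward invariance $f(I_-)\subset I_-$ then traps the entire cycle in $I_-$, and hence its itinerary is all~$-$. The paper gives no separate argument for the corollary; it is meant to follow in one line from Lemma~\ref{no only minus}, which says an attracting cycle with all-minus itinerary has period~$1$. Your affine argument for that last step is a valid substitute for simply citing the lemma, but it is more work than needed.

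Two points of caution about the remainder. First, under the hypothesis $f(I_-)\subset I_-$ you are necessarily in Case~2 of Lemma~\ref{no only minus}, never Case~3: a continuous self-map of the closed arc $I_-$ has a fixed point, so the graph meets the diagonal. Your description of Case~3 in the final paragraph (``every point of $I_-$ \ldots\ is drawn monotonically to the unique fixed point of $f|_{I_-}$'') is simply false --- in Case~3 there is no fixed point in $I_-$ and orbits leave $I_-$. This error does not damage your main argument, but it should be removed. Second, the alternative route via Theorem~\ref{uniqueness of periodic attractor} and Lemma~\ref{0,1 in basin} is logically sound but structurally backwards, as those results appear after the corollary in the paper; the intended proof uses only Lemma~\ref{no only minus}.
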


\subsection{\textbf{Period 3 tongue}}

\vspace{.3cm}

We will now show that a tongue of period 3 exists. The method used here specific but of a similar idea with period 4 case. It is just that  3 is a period too small to illustrate the general scheme and for that we will prove the period 4 case after this.

\begin{lemma}\label{int line}
     $y=cx+T$ restricted to $J$, intersects diagonal $y=x$ if $T \in (1-c)J$.
\end{lemma}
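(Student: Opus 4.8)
The plan is to simply locate the intersection point of the graph of $y = cx+T$ with the diagonal $y = x$ and check when it lands in $J$. Solving $cx + T = x$ gives $(1-c)x = T$. First I would dispose of the degenerate case $c = 1$: then the equation $(1-c)x=T$ is solvable (with every $x$ a solution) precisely when $T = 0$, and correspondingly $(1-c)J = \{0\}$, so the asserted equivalence holds trivially. Assuming henceforth $c \neq 1$, the unique solution is $x_* = T/(1-c)$, so the line meets the diagonal at the single point $(x_*, x_*)$, and the restriction to $J$ meets the diagonal if and only if $x_* \in J$.

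It then remains to observe that $x_* \in J$ is equivalent to $T \in (1-c)J$. By definition $(1-c)J = \{(1-c)s : s \in J\}$, so $T \in (1-c)J$ means $T = (1-c)s$ for some $s \in J$; dividing by the nonzero scalar $1-c$, this says exactly $s = T/(1-c) = x_* \in J$. Conversely if $x_* \in J$ then $T = (1-c)x_* \in (1-c)J$. This gives the claim.

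There is essentially no obstacle here; the only point requiring a word of care is that $1-c$ may be negative, so that multiplication by $1-c$ reverses the orientation of the interval $J$. This does not affect the argument, since $(1-c)J$ is still an interval and the set-theoretic equivalence $T \in (1-c)J \iff T/(1-c) \in J$ is insensitive to sign; one only uses that $1-c \neq 0$. (In the intended application $c = B^{+^{n}}B^{-^{m}}$ is a product of slopes with at least one contracting factor, so typically $0 < c < 1$ and $1-c > 0$, but the lemma does not need this.)
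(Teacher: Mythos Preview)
Your proof is correct; the paper actually states this lemma without proof, treating it as immediate. Your direct computation (solve $cx+T=x$ for $x_*=T/(1-c)$ and observe $x_*\in J \iff T\in(1-c)J$) is the obvious argument and is exactly what the reader is meant to supply.
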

\begin{lemma}\label{good interval period 3}
     Fix $\epsilon $ small enough so that  $B^-B^{+{2}}<1$ and let $b=1-\epsilon$.  Then there is an interval $I_b$  so that $f_{a,b}(1/2)>\dfrac{2-A^+}{B^+}$ if $a \in I_b$.  
     
\end{lemma}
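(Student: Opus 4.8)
The plan is to evaluate both sides of the asserted inequality explicitly as functions of $a$ and then invoke continuity. First I would record that $S(1/2)=1$ (both branches of $S$ give this value), so that
\[
f_{a,b}(1/2)=\Bigl(2\cdot\tfrac12+a+\tfrac b2\,S(1/2)\Bigr)\sslash1=\Bigl(a+\tfrac b2\Bigr)\sslash1 ,
\]
and with $b=1-\epsilon$ this equals $\bigl(a+\tfrac12-\tfrac\epsilon2\bigr)\sslash1$. On the other side, using $A^+=a-b/2=a-\tfrac12+\tfrac\epsilon2$ and $B^+=2(1+b)=4-2\epsilon$, one gets
\[
\frac{2-A^+}{B^+}=\frac{\tfrac52-a-\tfrac\epsilon2}{4-2\epsilon}.
\]
Both expressions are continuous in $a$ (piecewise linear, resp. affine), and $B^+>0$.

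Next I would restrict attention to the range $a\in[0,\tfrac12+\tfrac\epsilon2)$, on which $a+\tfrac12-\tfrac\epsilon2\in[0,1)$ so that no reduction mod $1$ occurs and $f_{a,b}(1/2)=a+\tfrac12-\tfrac\epsilon2$ literally. On this range the desired inequality $f_{a,b}(1/2)>\dfrac{2-A^+}{B^+}$ becomes, after clearing the positive denominator $4-2\epsilon$, a single strict affine inequality in $a$; solving it shows it is equivalent to $a>a_\epsilon$ with the explicit threshold
\[
a_\epsilon=\frac{\tfrac12+\tfrac52\epsilon-\epsilon^2}{5-2\epsilon},
\]
which tends to $1/10$ as $\epsilon\to0$ and in particular satisfies $0\le a_\epsilon<\tfrac12$ for $\epsilon$ small. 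Hence the open interval $I_b:=\bigl(a_\epsilon,\ \tfrac12+\tfrac\epsilon2\bigr)$ is nonempty and every $a\in I_b$ satisfies the required strict inequality, while the standing hypothesis $B^-B^{+2}=2\epsilon(4-2\epsilon)^2<1$ holds for $\epsilon$ small. If one prefers to avoid the arithmetic, it suffices to note that at $a=\tfrac12$ the left side equals $1-\tfrac\epsilon2$ whereas the right side is at most $\tfrac{2}{4-2\epsilon}$, which is $<1-\tfrac\epsilon2$ for small $\epsilon$; by continuity of both sides in $a$ the strict inequality then persists on a whole neighbourhood of $a=\tfrac12$, which we may take as $I_b$.

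There is no deep obstacle here; the only point requiring care is the bookkeeping of the $\sslash1$. One must make sure to work in a range of $a$ where $f_{a,b}(1/2)$ is represented by the literal value $a+\tfrac12-\tfrac\epsilon2$ (that is, $a<\tfrac12+\tfrac\epsilon2$), since otherwise the left side drops by $1$ and the inequality fails. Tracking this is precisely what pins down the right endpoint of $I_b$. The interval $I_b$ so obtained is the window governing the first-return behaviour of the break point $1/2$ that is exploited in the subsequent construction of the period-$3$ tongue.
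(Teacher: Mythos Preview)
Your argument is arithmetically correct and does produce a nonempty interval on which the literal inequality $f_{a,b}(1/2)>(2-A^+)/B^+$ holds. But the interval you find, $I_b=(a_\epsilon,\tfrac12+\tfrac\epsilon2)$ with $a_\epsilon\to\tfrac{1}{10}$, lives in a different part of the parameter circle than the paper's, and this matters because the lemma is not really a stand-alone statement: Proposition~4.1 begins ``Let $\epsilon,I_b$ as in Lemma~4.5'' and uses that specific $I_b$.

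The paper works in the regime $a+b/2\ge 1$, so that the $\sslash 1$ subtracts one and $f_{a,b}(1/2)=(B^++2A^+-4)/2=a+b/2-1$; parametrising by $A^+=1/2-\delta$ they obtain $0<\delta<k_\epsilon=\dfrac{1-10\epsilon+4\epsilon^2}{10-4\epsilon}$, i.e.
\[
I_b=\Bigl(\tfrac{2-\epsilon}{2}-k_\epsilon,\ \tfrac{2-\epsilon}{2}\Bigr),
\]
which sits near $a\approx 1$. On this interval $A^+\in(1/2-k_\epsilon,1/2)\subset(0,1/2)$, so $(2-A^+)/B^+$ is genuinely the right endpoint of the last lap of $f_{a,b}|_{I_+}$ and the inequality forces $f_{a,b}(1/2)\in\bigl((2-A^+)/B^+,\,1/2\bigr)\subset I_+$. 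That is exactly what the subsequent proposition needs to guarantee the itinerary $(-,+,+)$ for every $x\in I_-$.

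On your interval, by contrast, $A^+=a-\tfrac12+\tfrac\epsilon2\le\epsilon$ (and is negative for most of the range), so $(2-A^+)/B^+\ge \tfrac12$ is no longer a lap endpoint inside $I_+$, and $f_{a,b}(1/2)=a+\tfrac12-\tfrac\epsilon2$ lies in $I_-$, not $I_+$. The inequality is satisfied as a comparison of real numbers, but its intended geometric content---that the break point lands in the last lap of $I_+$---is lost, and feeding your $I_b$ into Proposition~4.1 would break the itinerary argument. So while your proof answers the lemma as literally phrased, it does not deliver the object the paper actually needs; the paper's choice of branch for the $\sslash 1$ is the essential difference.
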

\begin{proof}
    Observe that $f_{a,b}(\frac{1}{2})=\dfrac{B^+ +2A^+ -4}{2}$. Plugging in the values in the required inequality and simplifying we choose,
\begin{align*}
    0<\delta < \dfrac{1-10\epsilon+4\epsilon^2}{10-4\epsilon}
\end{align*} 
where $A^+=1/2-\delta,b=1-\epsilon$. Hence this translates to,
\begin{align*}
    a \in  \left(\dfrac{2-\epsilon}{2}-\dfrac{1-10\epsilon+4\epsilon^2}{10-4\epsilon},\dfrac{2-\epsilon}{2}\right)=I_b. 
\end{align*}
 
\end{proof}
The point of of having  $f_{a,b}(1/2)>\dfrac{2-A^+}{B^+}$, is the following:  Since $A^+ \in (0,1/2)$ we will get that $f_{a,b}(x)\in \left(\dfrac{2-A^+}{B^+},1/2\right)$ and $f_{a,b}^2(x) \in I_+$(as $b \lesssim 1$). Here $\left(\dfrac{2-A^+}{B^+},1/2\right)$ is the last lap in $I_+$  and $f_{a,b}$ maps $\left(\dfrac{2-A^+}{B^+},1/2\right)$ into $I_+$ as a linear diffeomorphism with slope $B^+$ (see figure \ref{3 case}). This condition ensures that any $x \in I_-$ has itinerary $(-,+,+,...)$. 
 
Then for $a \in \left(\dfrac{2-\epsilon}{2}-\dfrac{1-10\epsilon+4\epsilon^2}{10-4\epsilon},\dfrac{2-\epsilon}{2}\right)=I_b$, $f_{a,b}^3$ has an attracting cycle with itinerary $(-,+,+)$.\newline
\begin{proposition}\label{existence of period 3 tongue}

     Let $\epsilon,I_b$ as in Lemma \ref{good interval period 3}. Then $ \forall a\in I_b$, $f_{a,b}$ has an attracting periodic point of period 3 with single minus itinerary.
\end{proposition}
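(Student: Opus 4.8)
The plan is to verify that for $b = 1-\epsilon$ and $a \in I_b$, the map $f_{a,b}$ admits a genuine attracting $3$-cycle with itinerary $(-,+,+)$, by combining the slope bound already fixed in Lemma~\ref{good interval period 3} with an invariant-interval argument. First I would record the dynamical picture set up just before the statement: the condition $f_{a,b}(1/2) > (2-A^+)/B^+$, together with $A^+ \in (0,1/2)$ and $b \lesssim 1$, forces every point of $I_-$ to have itinerary beginning $(-,+,+)$, because $f_{a,b}$ sends $I_-$ (with slope $B^- = 2(1-b) = 2\epsilon$, hence a very short image near $f_{a,b}(1/2)$) into the last lap $\bigl((2-A^+)/B^+,\,1/2\bigr) \subset I_+$, which is then mapped diffeomorphically with slope $B^+$ back into $I_+$. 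So the third iterate returns to a neighborhood that re-enters $I_-$, closing the combinatorial loop.

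Next I would make the existence of the periodic point precise. Consider $g := f_{a,b}^3$ restricted to a suitable closed subinterval $J \subset I_-$ containing $1/2$ in its interior (or a one-sided neighborhood of $1/2$ inside $I_-$): on $J$ the itinerary is constant $(-,+,+)$, so $g$ is affine there with slope $B^- (B^+)^2 = B^-B^{+2} < 1$ by the choice of $\epsilon$. An affine contraction of a line, by Lemma~\ref{int line}, meets the diagonal provided the constant term lies in $(1-c)J$ with $c = B^-B^{+2}$; I would check this holds for $a \in I_b$ — this amounts to verifying that $g(1/2)$ lands on the correct side, which is exactly what the inequality $f_{a,b}(1/2) > (2-A^+)/B^+$ was engineered to guarantee, possibly after shrinking $I_b$ slightly to keep the whole orbit of $1/2$ away from the break points $0, 1/2$. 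The unique fixed point $x^*$ of $g$ in $J$ then has $|(f_{a,b}^3)'(x^*)| = B^-B^{+2} < 1$, so it is attracting; its orbit $\{x^*, f_{a,b}(x^*), f_{a,b}^2(x^*)\}$ has itinerary $(-,+,+)$, which has exactly one minus, hence exact period $3$ (a period-$1$ cycle would need itinerary $(-)$, impossible here since the other two points are in $I_+$).

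Finally I would note that "single minus itinerary" is immediate from $(-,+,+)$ containing exactly one $-$, and that uniqueness of this attracting cycle for the given $(a,b)$ follows from Theorem~\ref{uniqueness of periodic attractor}, so the proposition's conclusion holds for all $a \in I_b$.

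The main obstacle I anticipate is bookkeeping rather than conceptual: one must confirm that the forward orbit of $1/2$ (and of every point in the relevant subinterval of $I_-$) genuinely avoids the break points and stays in the laps where the itinerary $(-,+,+)$ is valid throughout the three steps, and that the interval $J$ can be chosen so that $g(J) \subset J$ — i.e. that the affine map $g$ really does map some closed interval around its fixed point into itself. This is where the explicit choice of $\delta$ and the smallness of $\epsilon$ in Lemma~\ref{good interval period 3} must be used carefully; the slope being $B^-B^{+2} < 1$ makes the contraction automatic once the image is correctly located, but pinning down that the image of the last lap under $f_{a,b}$ re-enters $I_-$ near $1/2$ (rather than overshooting) requires tracking the affine coefficients $A^\pm, B^\pm$ through the composition, which I would do by direct substitution as in the proof of Lemma~\ref{good interval period 3}.
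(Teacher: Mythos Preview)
There is a genuine gap. You correctly note that for $a \in I_b$ the inequality $f_{a,b}(1/2) > (2-A^+)/B^+$ forces every $x \in I_-$ to start with itinerary $(-,+,+)$, so $g := f_{a,b}^3|_{I_-}$ is affine with slope $c = B^-B^{+2} < 1$. But that inequality controls \emph{only} the itinerary; it does not tell you where $g(I_-)$ lands, and the contraction in $x$ by itself does not place the fixed point of $x \mapsto cx + T$ inside $I_-$. In fact for $b=1$ one computes $f^3_{a,b}(1/2) \equiv 21a - 37/2 \pmod 1$, which sweeps the whole circle as $a$ runs over $I_b \approx (0.9,1)$; for a sizable portion of $I_b$ the tiny image $g(I_-)$ lies entirely inside $I_+$, so no invariant $J \subset I_-$ with $g(J)\subset J$ exists there. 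Your claim that landing on the correct side ``is exactly what the inequality \dots\ was engineered to guarantee'' is therefore mistaken.

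The paper's argument turns this sweep into the mechanism. Writing $g(x) = cx + T$ with $T = P_1(b)a + P_2(b)$ and $P_1(b) = B^{+2}+B^++1 \gtrsim 20$, the constant $T$ ranges over an interval of length $P_1(b)\cdot k_\epsilon \sim 20\times 1/10 = 2 > 1$ as $a$ traverses $I_b$, and Lemma~\ref{int line} then yields a subinterval $I_1 \subset I_b$ on which the fixed point lies in the target interval. The essential ingredient is expansion in the \emph{parameter} $a$, not contraction in the phase variable $x$; your outline never invokes it. The ``direct substitution'' you promise in the last paragraph would eventually uncover this, but as written the plan misidentifies both what needs to be checked and why it succeeds.
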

\begin{proof}
    Let $k_\epsilon=\dfrac{1-10\epsilon+4\epsilon^2}{10-4\epsilon} \text{ and we already have that } I_b= \left(\dfrac{2-\epsilon}{2}-\dfrac{1-10\epsilon+4\epsilon^2}{10-4\epsilon},\dfrac{2-\epsilon}{2}\right)$. 
Then $A^+ \in (\frac{1}{2}-k_\epsilon,\frac{1}{2})$ for $a \in I_b$. This means $f_{a,b}(\frac{1}{2})>\dfrac{2-A^+}{B^+}$ (by Lemma \ref{good interval period 3}). Hence $x\in I_-$ has itinerary $(-,+,+,...)$ and 
\begin{align*}
    f_{a,b}^3(x)=B^-B^{+^{2}}x+\{P_1(b)a+P_2(b)\} \mod 1.
\end{align*}
 So formally there is only one$\mod 1$, as $f_{a,b}^2(I_-) \subset I_+$.
    By simple calculation , $P_1(b)=B^{+{2}}+B^++1\lesssim 21 $. Say $P_1(b)>20$ for $\epsilon$ small enough. 
    
let $c=B^-B^{+^{2}}$, $ J=\left(\dfrac{2-\epsilon}{2}-k_\epsilon,\dfrac{2-\epsilon}{2}\right),$  $T=P_1(b)a+P_2(b)$. Hence $T$ covers an interval of length $P_1(b). k_\epsilon \sim 20\times 1/10=2 >1$. So there will be a subinterval $I_1\subset I_b$, so that $P_1(b).|I_1| =(1-c)|J|$. And hence we have that $f_{a,b}^3(x)=x$ by Lemma \ref{int line}.
\end{proof}

In figure \ref{3 case} we give an example of  parameters $a,b$ in the range suggested by the proof of Proposition \ref{existence of period 3 tongue}. 
\begin{figure}[hbt!]
    \centering
    \includegraphics[scale=.6]{ 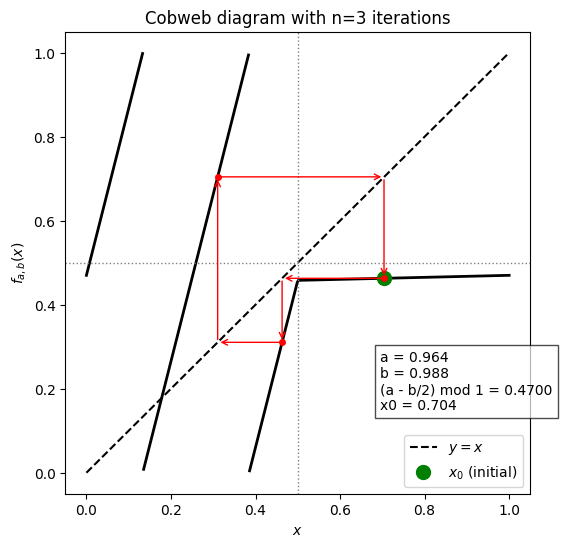}
    \caption{\small $a=0.964, b=0.988, A^+=.47$}
    \label{3 case}
    
\end{figure}

\subsection{\textbf{Tongues of period $\geq 4$}}
\vspace{.4cm}

We will now give a series of lemmas that will eventually help us in proving that tongue exists for period 4. 
\subsubsection{\textbf{Idea of the proof for period 4}}
\vspace{.2cm}

There are two things one needs to ensure when looking for a single minus, period 4, attracting orbit of some $x$.
\begin{itemize}
    \item 
    Firstly,  $x \in I_-, \text{ }f_{a,b}(x)\in I_+, \text{ } f_{a,b}^2(x)\in I_+, \text{ } f_{a,b}^3(x)\in I_+ $,
    
    \item and $f_{a,b}^4(x))=x$.
\end{itemize}

\indent That $x$ is of minimal period 4 is taken care of by the itinerary itself. We will work with the lift $F_{a,b}$ and use the continuity of $(a,b,x) \mapsto F_{a,b}(x)$ and the fact that $a \mapsto F_{a,b}(x)$ is a translation. We will first look for  $a',x$ such that  $x$ has the desired itinerary and there is a big interval $J$ about $a'$ such for all $a\in J$ the same chosen $x$ has the desired itinerary. This is summarized in Lemmas \ref{setting up1}, \ref{set up 2}, \ref{nest}.

\vspace{.2cm}

\indent Note that $F_{a,b}^n$ is a piecewise  linear map in $a$ variable. If one can inductively maintain the required itinerary then the expansions (slopes) in $a$ variable are respectively $1,   (B^++1), (B^{+^{2}}+B^++1), (B^{+^{3}}+B^{+^{2}}+B^++1)$ for the first four iterations. Since $b\lesssim1$, this rate is large enough to ensure the existence of a big interval $J$ with the correct itinerary. The graph of $F^n_{a,b}$ goes up or down accordingly as $a$ increases or decreases. Now if there is good  expansion in $a$ at the 4th stage  and enough room to vary $a$ in $J$ then the graph can be vertically translated to intersect the translated diagonal at the point $(x,x+k)$ for some integer $k$ and the task is completed. The required estimates on the size of the interval $J$ is explained by Lemma \ref{use int line}.   \\

\subsubsection{\textbf{Proof of existence of tongues for period 4}}
\vspace{.3cm}
 
  We will work with the unique lift of $f_{a,b}$ so that $F_{a,b}(0) \in [0,1]$. So assume $A^+=a-b/2 \in (0,\tfrac{1}{2})$, i.e. $b/2<a<1/2+b/2$. Before we start our next proposition we will need to set up some notation for brevity,
 \begin{itemize}
    
    \item Define $\xi_i(=\xi_i(b)), i=1,2,3$ as follows,  \[
 \xi_1=\dfrac{1}{B^{+^{3}}+B^{+^{2}}+B^++1}, \text{ } \xi_2= (B^++1) \xi_1, \text{ }\xi_3=(B^{+^{2}}+B^++1)\xi_1.
 \]
    \item Define the following maps,
\[
\theta_{i}^{b,x}: \mathbb{T} \to \mathbb{R}, \text{ }\theta_{i}^{b,x}(a)=F_{a,b}^{i}(x); \text{ } i=1,2,3,4.
\]
    \item For $b\in [1/2,1]$, define, \[ \text{SM}^{4}(b) \coloneq \{ (a,x) \in \mathbb{T}^2 :x \in I_-, \text{ }f_{a,b}(x)\in I_+, \text{ } f_{a,b}^2(x)\in I_+, \text{ } f_{a,b}^3(x)\in I_+ \} \](SM stands for \textit{starts with minus}).
 \end{itemize}
 \vspace{.1cm}
 For $z,w \in \mathbb{R}/\mathbb{Z}$, $z \wedge w \coloneq \min\{z,w\}$ where $\mathbb{R}/\mathbb{Z}$ is identified with $[0,1)$ equipped with the natural order from $\mathbb{R}$. For $x,y \in \mathbb{R}$, $x \wedge y \coloneq \min\{x,y\}$.
\bigskip

\begin{lemma}\label{setting up1}
    Let $b\lesssim1$ be such that there is some $(a,x)\in \text{SM}^4(b)$ with $\dfrac{2-b}{2}<a <1/2+b/2$ and $F_{a,b}(I_-)\subset (2,2.5)$.
    Pick such an $(a,x)\in \text{SM}^{4}(b)$ and fix it. 
    \begin{enumerate}
        \item Let $\alpha \in \mathbb{T}$ so that $|\alpha-a| \leq \text{ } f_{a,b}(x) \wedge (1/2-f_{a,b}(x))$ then $f_{\alpha,b}(x)\in I_+$.
        \vspace{.3cm}
        
        \item Let $\alpha \in \mathbb{T}$ so that $\alpha$ satisfies the bound in (1) (i.e. $F_{a,b}(x) \in (2,2.5) $ ) 
        and $|\alpha-a| \leq \text{ } \dfrac{f^2_{a,b}(x) \wedge (1/2-f^2_{a,b}(x))}{B^++1} $ then $f^2_{\alpha,b}(x)\in I_+$.
          \vspace{.3cm}
        \item Let $\alpha \in \mathbb{T}$ so that $\alpha$ satisfies bounds in  (1), (2) and  $|\alpha-a| \leq \text{ } \dfrac{f^3_{a,b}(x) \wedge (1/2-f^3_{a,b}(x))}{B^{+^{2}}+B^++1} $ then $f^3_{\alpha,b}(x)\in I_+$.
        
    \end{enumerate}
\end{lemma}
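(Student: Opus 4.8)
The strategy is to exploit the fact that the maps $\theta_i^{b,x}$ are piecewise linear in $a$ with controlled slopes, provided the itinerary of $x$ stays fixed. Fix $(a,x) \in \text{SM}^4(b)$ as in the hypothesis. For part (1), observe that $a \mapsto F_{a,b}(x)$ is the translation $a \mapsto 2x + a + \tfrac b2 S(x)$, hence an isometry of $\mathbb{T}$ with slope $1$. Since $f_{a,b}(x) = F_{a,b}(x)\sslash 1 \in I_+ = [0,1/2]$ and $F_{a,b}(I_-) \subset (2,2.5)$, the point $F_{a,b}(x)$ sits in the interval $(2, 2.5)$ at distance at least $f_{a,b}(x) \wedge (1/2 - f_{a,b}(x))$ from the two endpoints $2$ and $2.5$ of the lap (equivalently, from the break points of $f_{a,b}$ lifted to that lap). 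Moving $a$ by at most that amount keeps $F_{\alpha,b}(x)$ inside $(2,2.5)$, so $f_{\alpha,b}(x) = F_{\alpha,b}(x) - 2 \in I_+$, which is the claim.

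For parts (2) and (3) I would induct, using that once the itinerary is preserved through step $i$, the composite $a \mapsto F_{a,b}^{i}(x)$ has a well-defined slope in $a$. Since the first iterate contributes slope $1$, and each further iterate in $I_+$ multiplies by $B^+$ and adds a fresh $+a$ term, the slope of $a \mapsto F_{a,b}^{2}(x)$ is $B^+ + 1$ and that of $a \mapsto F_{a,b}^{3}(x)$ is $B^{+^2} + B^+ + 1$, exactly matching the denominators appearing in the bounds. Concretely, for (2): assuming $|\alpha - a|$ is small enough that part (1) already guarantees $f_{\alpha,b}(x) \in I_+$, the map $\alpha \mapsto F_{\alpha,b}^2(x)$ is affine with slope $B^+ + 1$ on the relevant interval of $\alpha$'s, and $f^2_{a,b}(x) \in I_+$ lies at distance $\geq f^2_{a,b}(x)\wedge(1/2 - f^2_{a,b}(x))$ from the endpoints of its lap; dividing that safety margin by the slope $B^++1$ gives the allowed range for $|\alpha - a|$, and the hypothesis $F_{a,b}(I_-)\subset(2,2.5)$ together with $b\lesssim 1$ ensures the relevant lap is genuinely a single affine branch (no break point of $f_{\alpha,b}$ is crossed in between). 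Part (3) is identical with slope $B^{+^2}+B^++1$ and one more iterate.

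The main technical point to be careful about — and the step I expect to require the most attention — is verifying that, as $\alpha$ varies over the stated interval, the iterate $F_{\alpha,b}^i(x)$ traverses its lap \emph{without meeting a break point}, so that the slope of $\alpha \mapsto F_{\alpha,b}^i(x)$ is honestly the constant $B^{+^{i-1}}+\cdots+B^++1$ rather than a piecewise quantity; this is precisely what the nested bounds (each part assuming the previous ones) are designed to force, and it is where the hypotheses $\tfrac{2-b}{2} < a < 1/2 + b/2$, $F_{a,b}(I_-)\subset(2,2.5)$, and $b \lesssim 1$ get used. Once that is in place, each assertion is just: an affine map of known slope moves a point away from the endpoints of an interval at a rate equal to that slope, so the point stays inside as long as $|\alpha - a|$ is below (distance to boundary)$/$(slope). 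I would write this out as a short induction on $i$, carrying the conjunction of the bounds (1),\dots,$(i)$ as the inductive hypothesis.
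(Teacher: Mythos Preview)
Your proposal is correct and follows essentially the same approach as the paper: both exploit that $\alpha\mapsto F_{\alpha,b}^i(x)$ is affine with slope $1$, $B^++1$, $B^{+^2}+B^++1$ once the itinerary through step $i-1$ is preserved, and deduce the bounds by comparing $|\alpha-a|$ to the distance of $F_{a,b}^i(x)$ from the endpoints of its lap. The only cosmetic difference is that the paper spells out the concrete laps (e.g.\ $F_{a,b}^2(x)\in(4,4.5)\cup(5,5.5)\cup(6,6.5)$) and computes $F_{\alpha,b}^i(x)-F_{a,b}^i(x)$ explicitly, whereas you phrase it as an induction; the content is the same.
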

\begin{proof}
Observe that $0<F_{a,b}(x)<2.5$ for $x\in I$ and $0<F_{a,b}(A^+)< 2.5$ under the assumed conditions. For $\alpha \in \mathbb{T}$, denote $\alpha^+=\alpha-b/2, \alpha^-=\alpha+3b/2$, then the following hold. Since $\dfrac{2-b}{2}\geq b/2$ if $b \in [0,1] $, we get that $b/2<a<1/2+b/2$. 
 
 \vspace{.3cm}
 
\begin{enumerate}
    \item Under the assumptions on $b$, $f_{a,b}(I_-) \subset I_+$ and  $F_{a,b}(x) \in (2,2.5)$ . Then  $F_{a,b}(x)-2=f_{a,b}(x)$ and $2.5-F_{a,b}(x)=0.5-f_{a,b}(x)$.
    Since $F_{a,b}(x)-F_{\alpha,b}(x)=a-\alpha$ and (1) follows.
    
    \vspace{.3cm}
    \item Note that $f^2_{a,b}(x) \in I_+$ if,  $F^2_{a,b}(x)  \in (4,4.5 ) \bigcup (5,5.5)\bigcup (6,6.5)$, $|F_{a,b}^2(2,2.5)|= 2-(1-b)$.
    \vspace{.1cm}
\begin{itemize}
    \item Now $F^2_{a,b}(x)  \in (4,4.5 )$ and if  $|\alpha-a|\leq \dfrac{(4.5 -F^2_{a,b}(x) ) \wedge (F^2_{a,b}(x)-4)  }{(B^++1)}$ then,\\ $F^2_{\alpha,b}(x) \in (4,4.5) $. Observe that,
    \vspace{.3cm}
    \begin{itemize}
        \item[]  $F^2_{a,b}(x)  \in (4,4.5 ) \implies 4.5 -F^2_{a,b}(x) = 1/2-f_{a,b}^2(x) \text{ as } F^2_{a,b}(x)-4 = f^2_{a,b}(x) $.
        \vspace{.3cm}
        \item[] So, $\dfrac{(4.5 -F^2_{a,b}(x))  \wedge( F^2_{a,b}(x)-4)  }{(B^++1)}=\dfrac{(1/2 -f^2_{a,b}(x))  \wedge f^2_{a,b}(x) }{(B^++1)}.$ 
    \end{itemize}
   \vspace{.3cm}
    \item  We can similarly deal with $F^2_{a,b}(x)  \in (5,5.5 ) $ and $F^2_{a,b}(x)  \in (6,6.5 ) $
    
\end{itemize}
    Now say $F_{a,b}^2(x) \in (4,4.5)$ and let $\alpha \in \mathbb{T} $ such that it satisfies (1) and
    \vspace{.3cm}
    
    $|\alpha-a| \leq \text{ } \dfrac{f^2_{a,b}(x) \wedge (1/2-f^2_{a,b}(x))}{B^++1}$. Because $\alpha$ satisfies (1), we have $F_{\alpha,b}(x) \in (2,2.5)$. Hence, 
    $F^2_{a,b}(x)=F_{a,b}(B^-x+A^-)=B^+(B^-x+A^--2) +A^++4=$ and $F^2_{\alpha,b}(x)=F_{\alpha,b}(B^-x+\alpha^-)=B^+(B^-x+\alpha^--2) +\alpha^++4$, or, $F^2_{\alpha,b}-F^2_{a,b}= (B^++1)(\alpha-a)$. We can Deal with the other cases similarly.
    
    \vspace{.3cm}

    \item Now $f_{a,b}^3(x) \in I_+$ means, \[
    F_{a,b}^3(x) \in \bigcup_{j=8}^{14}(j,j+1/2).
    \]
    Without loss of generality consider, $F_{a,b}^3(x) \in (8,8.5)$. Assume $\alpha \in \mathbb{T}$ satisfies (1),(2) then as before, $F_{a,b}^3(x)-F_{\alpha,b}^3(x)=(B^{+{2}}+B^++1)(a-\alpha)$. Similar considerations for other cases makes the proof (3) complete.
\end{enumerate}
 
\end{proof}
  Let $b\lesssim1$ be such that $\text{SM}^4(b) \neq \emptyset$
    Pick some $(a,x)\in \text{SM}^{4}(b)$ and fix it. Define \[
    \gamma_1(a,x)=f_{a,b}(x) \wedge (1/2-f_{a,b}(x)),\]
    \[\gamma_2(a,x)=\dfrac{f^2_{a,b}(x) \wedge (1/2-f^2_{a,b}(x))}{B^++1},\]
   \[ \gamma_3(a,x)= \dfrac{f^3_{a,b}(x) \wedge (1/2-f^3_{a,b}(x))}{B^{+^{2}}+B^++1}.
    \]
    Then define \[\gamma(a,x)=\min \{\gamma_1(a,x),\gamma_2(a,x),\gamma_3(a,x),(x-0.5),(1-x)\}.\] This definition will be useful for final proof.\\
\begin{lemma}\label{set up 2}
        There exists $\epsilon>0$ such that for all $b \in (1-\epsilon,1)$ there exists some $a,x \in \mathbb{T}$ such that following are true. 
        \begin{enumerate}
            \item $x \in (1/2+\xi_1, 1-\xi_1).$
            \vspace{.2cm}
            \item $\theta_1^{b,x}(a) \in (2+\xi_1,2.5-\xi_1)=K^1_{1}.$
            \vspace{.2cm}
            \item $\theta_2^{b,x}(a) \in \bigcup_{j=1}^{3}(3+j+\xi_2,3+j+1/2-\xi_2) $. Call, $K^2_{j}=(3+j+\xi_2,3+j+1/2-\xi_2); j=1,2,3.$
            \vspace{.2cm}
            \item $\theta_3^{b,x}(a) \in \bigcup_{j=1}^{j=7}(7+j+\xi_3,7+j+1/2-\xi_3)$. Call $K^3_{j}=(7+j+\xi_3,7+j+1/2-\xi_3), j=1,...,7.$
        \end{enumerate}
        In particular for such an $(a,x)\in \mathbb{T}^2$ we have that $(a,x)\in \text{SM}^4(b)$ with $\gamma(a,x)>\xi_1$.
    
    \end{lemma}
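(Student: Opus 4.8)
The plan is to exhibit explicit witnesses, taking $x=3/4$ for every $b$ (the zero of $S$ on $I_-$) and choosing $a=a(b)$ by a short nested-interval construction. The choice $x=3/4$ is the point: $F_{a,b}(3/4)=B^-\cdot\tfrac34+A^-=\tfrac{3(1-b)}{2}+a+\tfrac{3b}{2}=a+\tfrac32$ for every $b$, so $\theta_1^{b,3/4}$ is just translation by $\tfrac32$ in $a$, independent of $b$. Fix $\epsilon>0$ small enough that $\xi_3(b)<\tfrac14$ for $b\in(1-\epsilon,1)$ (possible since $\xi_3(b)\to\tfrac{21}{85}<\tfrac14$ as $b\to1$), so that $K^1_1$ and all the $K^2_j,K^3_j$ are nonempty; $\epsilon$ will be shrunk finitely often below. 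The mechanism is expansion in $a$: on the successive $a$-regions where the itinerary of $3/4$ reads $-,+,+,+$, each $\theta_i^{b,3/4}$ is affine in $a$ with slope $1$, $B^++1$, $B^{+2}+B^++1$ respectively, and since $B^+=2(1+b)$ is near $4$ these slopes are bounded below, so images of intervals sweep across the target windows. The windows $K^i_j$ were, moreover, defined precisely so that the endpoint bookkeeping is exact at $b=1$ and only an $O(1-b)$ perturbation for $b<1$.

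In detail (always $x=3/4$): condition (2) is exactly $a\in J_1:=(1/2+\xi_1,1-\xi_1)$, and $J_1\subset(1/2,1)$ forces $A^+=a-b/2\in(0,\tfrac12)$ for $b$ near $1$, so the $I_+$-branch $\tilde f_{a,b}(y)=B^+y+A^+$ of the lift governs the next two steps. For $a\in J_1$ we have $[F_{a,b}(3/4)]=2$, $f_{a,b}(3/4)=a-\tfrac12\in I_+$, and $\theta_2^{b,3/4}(a)=B^+(a-\tfrac12)+A^++4=(B^++1)a+3-\tfrac{3b}{2}$; then $\theta_2^{b,3/4}(1/2+\xi_1)=\tfrac92-\tfrac b2+\xi_2$, which equals the left endpoint $4+\xi_2$ of $K^2_1$ at $b=1$ and exceeds it by $\tfrac{1-b}{2}$ otherwise, so (shrinking $\epsilon$ so $\tfrac{1-b}{2}<|K^2_1|$) $J_2:=\{a\in J_1:\theta_2^{b,3/4}(a)\in K^2_1\}$ is a nonempty open subinterval of $J_1$ with the same left endpoint $1/2+\xi_1$. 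Repeating once more, for $a\in J_2$ we have $[F^2_{a,b}(3/4)]=4$, $f^2_{a,b}(3/4)\in I_+$, and $\theta_3^{b,3/4}(a)=B^+\bigl(F^2_{a,b}(3/4)-4\bigr)+A^++8$ is affine with slope $B^{+2}+B^++1$; one computes $\theta_3^{b,3/4}(1/2+\xi_1)=B^+\xi_2+\xi_1+8=(B^{+2}+B^++1)\xi_1+8=8+\xi_3$ at $b=1$, and $8+\xi_3+(B^++1)\tfrac{1-b}{2}$ in general, so (shrinking $\epsilon$ so $(B^++1)\tfrac{1-b}{2}<|K^3_1|$) $J_3:=\{a\in J_2:\theta_3^{b,3/4}(a)\in K^3_1\}$ is a nonempty (very short) open subinterval of $J_2$. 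Since all these objects vary continuously with $b$, one $\epsilon$ serves all of $(1-\epsilon,1)$, and any $a\in J_3$ with $x=3/4$ satisfies (1) (as $\xi_1<\tfrac14$), (2), (3) with $j=1$, and (4) with $j=1$.

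Finally I would read off the ``in particular'' claims for this $(a,3/4)$. That $(a,3/4)\in\text{SM}^4(b)$ is immediate: $3/4\in I_-$, and $f^i_{a,b}(3/4)=\theta_i^{b,3/4}(a)\sslash1\in I_+$ for $i=1,2,3$ by (2)--(4). For $\gamma(a,3/4)>\xi_1$ one runs through the five terms of the minimum using how deep the windows lie: $\theta_1^{b,3/4}(a)\in K^1_1$ gives $f_{a,b}(3/4)\in(\xi_1,\tfrac12-\xi_1)$, so $\gamma_1>\xi_1$; $\theta_2^{b,3/4}(a)\in K^2_1$ gives $f^2_{a,b}(3/4)\wedge(\tfrac12-f^2_{a,b}(3/4))>\xi_2=(B^++1)\xi_1$, so $\gamma_2>\xi_1$; $\theta_3^{b,3/4}(a)\in K^3_1$ gives $f^3_{a,b}(3/4)\wedge(\tfrac12-f^3_{a,b}(3/4))>\xi_3$, so $\gamma_3>\tfrac{\xi_3}{B^{+2}+B^++1}=\xi_1$; and $x-\tfrac12=1-x=\tfrac14>\xi_1$. (This $(a,3/4)$ also meets the extra hypotheses of Lemma~\ref{setting up1}: $\tfrac{2-b}{2}<1/2+\xi_1<a<1-\xi_1<1/2+b/2$ for $b$ near $1$, and $F_{a,b}(I_-)$ is an interval of length $1-b$ containing $F_{a,b}(3/4)\in K^1_1$, hence $\subset(2,2.5)$ once $1-b<\xi_1$.)

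The only real work is the bookkeeping --- verifying $[F^i_{a,b}(3/4)]=2,4,8$ and the correct lift branch at each stage so the affine formulas for $\theta_2^{b,3/4},\theta_3^{b,3/4}$ are valid, and then the endpoint computations. The underlying idea (fast expansion in $a$ plus the built-in alignment of the windows at $b=1$) reduces the statement to a computation; the one genuinely delicate point, which I expect to be the main obstacle, is choosing a single $\epsilon$ that keeps all the $O(1-b)$ corrections below the fixed positive lengths of the windows $K^i_j$.
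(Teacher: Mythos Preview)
Your proof is correct and follows essentially the same nested-interval mechanism as the paper: use the expansion of $a\mapsto F^i_{a,b}(x)$ (slopes $1,\,B^++1,\,B^{+2}+B^++1$) to carve out $J_1\supset J_2\supset J_3$ hitting the windows $K^i_j$, then read off $\gamma>\xi_1$ from the built-in margins. The only difference is cosmetic: the paper treats $x\in(1/2+\xi_1,1-\xi_1)$ as arbitrary, does the explicit size computations at $b=1$ in a separate Lemma~\ref{nest}, and then invokes continuity in $(a,b,x)$ to pass to $b<1$; you instead fix $x=3/4$ (exploiting $S(3/4)=0$ so that $\theta_1^{b,3/4}(a)=a+3/2$ is $b$-independent) and carry the $O(1-b)$ corrections through the computation directly, always landing in the leftmost window $K^i_1$. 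Your choice makes the bookkeeping marginally cleaner and self-contained, while the paper's version, by allowing any $x$, feeds more naturally into the general period-$n$ algorithm that follows; but the two arguments are the same in substance.
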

\begin{proof}
     Let $x \in (1/2+\xi_1,1-\xi_1)$ then by letting $b \to 1$ it is clear that
        \[
        (1/2+\xi_1-B^-x,1-\xi_1-B^-x) \subset \displaystyle\Bigl((2-b)/2,b/2+1/2\Bigr).
        \]

Also,
       \[
       \theta_1^{b,x}:  (1/2+\xi_1-B^-x,1-\xi_1-B^-x) \to  (1/2+3b/2+\xi_1,1+3b/2-\xi_1)
       \]
       
 \noindent    is an onto linear translation by $(3b/2+B^-x)$. If $a \in  (1/2+\xi_1-B^-x,1-\xi_1-B^-x) $ then  $b/2<a<1/2+b/2$, which ensures that the correct lift $F_{a,b}$ is being applied in accordance with our calculations.
 
Observe that $(a,b,x) \mapsto F_{a,b}(x)$ is continuous. Now the result follows trivially letting $b \to 1$ and using Lemma \ref{nest}.
\end{proof}

In what follows we will assume $b=1$. Then,

\[
\xi_1 = \tfrac{1}{85}, \quad \xi_2 = \tfrac{1}{17}, \quad \xi_3 = \tfrac{21}{85},
\]

\[
|K^1_{1}|=|(0.5+\xi_1,1-\xi_1)|=\tfrac{81}{170}, 
\quad |K^2_{j}|=\tfrac{13}{34}, 
\quad |K^3_{j}|=\tfrac{1}{170}.
\]
\vspace{.01cm}

Let $ d(A,B) \text{  is  the distance between two sets in metric space } (X,d)$. Then,  
 
\[
d(K^2_{j},K^2_{j+1})=\tfrac{21}{34}, 
\quad d(K^3_{j},K^3_{j+1})=\tfrac{169}{170}.
\]

\[
\sup\{|x-y|: x \in K^2_{j}, y \in K^2_{j+1}\} = \tfrac{47}{34}, \quad j=1,2.
\]

\[
\sup\{|x-y|: x \in K^3_{j}, y \in K^3_{j+1}\} = \tfrac{171}{170}, \quad j=1,\ldots,6.
\]

\begin{lemma}\label{nest}
Let $b=1$. The following points outline a nesting procedure. Let $x \in (1/2+\xi_1,1-\xi_1)$ be arbitrary.
\begin{enumerate}
    \item There exists nonempty interval $J_1 \subset (1/2,1) \subset \mathbb{T}$ such that, $\theta_1^{b,x}|_{J_1}: J_1 \to \mathbb{R}$ is a linear translation onto $K^1_{1}$.  Hence, $|J_1|=|K^1_{1}| = \tfrac{81}{170}$.
     To be precise, $J_1=(1/2+\xi_1, 1-\xi_1)$ is mapped onto $(2+\xi_1,2.5-\xi_1)$ by $\theta_1^{b,x}$ as a translation by $1.5$.
    \item  There exists non empty interval $J_2 \subset J_1$ so that, $\theta_2^{b,x}(J_2)$ =$K_{j}^2$ for some $j \in \{1,2,3\}$ and $|J_2| \sim \tfrac{1}{5} \times \tfrac{13}{34}.$
    \item There exists a non empty interval $J_3 \subset J_2$ so that $\theta_3^{b,x}(J_3)$ =$K_{j}^3$ for some $j \in \{1,...,7\}$ and $|J_3| \sim \tfrac{1}{21} \times \tfrac{1}{170}.$
\end{enumerate} 

\end{lemma}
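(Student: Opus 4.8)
The plan is to build the three intervals $J_1 \supset J_2 \supset J_3$ one at a time, pulling back the target intervals $K^1_1, K^2_j, K^3_j$ through the maps $\theta_1^{b,x}, \theta_2^{b,x}, \theta_3^{b,x}$, which for $b=1$ are piecewise linear in $a$ with explicitly known slopes. The key point established in Lemma \ref{set up 2} (and its proof) is that while $x$ stays in $(1/2+\xi_1, 1-\xi_1)$ and $a$ ranges over an appropriate interval, the correct lift $F_{a,b}$ is being used and the iterates $F_{a,b}^i(x)$, viewed as functions of $a$, are \emph{genuine} affine maps with slopes $1$, $(B^++1)$, $(B^{+^2}+B^++1)$ on each lap, provided the itinerary $(-,+,+,+)$ is maintained. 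I would first record, for $b=1$, the slope values $B^+ = 4$, so $B^++1 = 5$ and $B^{+^2}+B^++1 = 21$, which is exactly why the reciprocals of these quantities ($1/5$ and $1/21$) appear in the size estimates of $J_2$ and $J_3$.

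First I would construct $J_1$. For $b=1$ and fixed $x \in (1/2+\xi_1, 1-\xi_1)$, the map $a \mapsto F_{a,1}(x) = B^- x + a + 3b/2$ restricted to $a \in (1/2+\xi_1 - B^- x,\, 1-\xi_1 - B^- x)$ is a translation by $3/2 + B^- x$ onto $(2+\xi_1,\, 2.5-\xi_1) = K^1_1$; this is precisely the computation in the proof of Lemma \ref{set up 2}, and it gives $J_1 = (1/2+\xi_1,\,1-\xi_1)$ with $|J_1| = |K^1_1| = 81/170$ once one uses $b=1$ to identify $J_1$ (after the affine change of variable) with the stated interval in $\mathbb{T}$. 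Next, inside $J_1$, the map $\theta_2^{b,x}$ is affine with slope $B^++1 = 5$ on each sublap where the itinerary of the second iterate is constant; since $\theta_2^{b,x}(J_1) = F_{a,1}(F_{a,1}(x))$ sweeps out an interval of length $5 \cdot |J_1| = 5 \cdot 81/170 > 2$, and $K^2_1, K^2_2, K^2_3$ together with the gaps $d(K^2_j, K^2_{j+1}) = 21/34$ are arranged so that the image must fully overshoot at least one of them, there is a subinterval $J_2 \subset J_1$ with $\theta_2^{b,x}(J_2) = K^2_j$ for some $j$, of length $|J_2| = |K^2_j|/5 = (1/5)(13/34)$. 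Finally, the identical argument applied once more: on $J_2$, $\theta_3^{b,x}$ is affine with slope $B^{+^2}+B^++1 = 21$, its image has length $21 \cdot |J_2| \sim 21 \cdot (1/5)(13/34)$, which comfortably exceeds $|K^3_j| + d(K^3_j, K^3_{j+1})$ for the tiny intervals $K^3_j$ ($|K^3_j| = 1/170$), so some $K^3_j$ is entirely covered and pulling it back gives $J_3 \subset J_2$ with $|J_3| = |K^3_j|/21 = (1/21)(1/170)$.

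The main obstacle — and the step that needs genuine care rather than routine bookkeeping — is verifying that at each stage the relevant iterate really is affine with the claimed slope across the \emph{whole} of the candidate subinterval, i.e. that the itinerary $(-,+,+)$ does not break as $a$ varies over $J_1$ (resp. $J_2$). This is where the explicit placement of the $K^i_j$ with margins $\xi_i$ (chosen in Lemma \ref{set up 2} precisely so that $\gamma(a,x) > \xi_1$) does the work: staying $\xi_i$ away from the endpoints $j$ and $j+1/2$ of each target band guarantees that the images land strictly inside laps of $I_+$, so the next iterate is again affine with constant slope, and one must check that the overshoot in each image interval is large enough to straddle a full target band \emph{plus} one adjacent gap (so that at least one band is swallowed whole regardless of where the image's left endpoint falls). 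The quantities $d(K^2_j, K^2_{j+1}) = 21/34$, $d(K^3_j, K^3_{j+1}) = 169/170$, and the sup-distances $47/34$, $171/170$ are exactly the data needed for this straddling check, and I would present that comparison — image length versus (band length $+$ gap) — as the crux of the proof, leaving the affine-translation identities to the computations already done in Lemmas \ref{setting up1} and \ref{set up 2}.
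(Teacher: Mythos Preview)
Your proposal is correct and follows essentially the same approach as the paper: the paper's proof is a one-sentence remark that ``$\theta_2^{b,x}|_{J_1}$ and $\theta_3^{b,x}|_{J_1}$ are linear maps with slope $5$ and slope $21$ respectively'' and that one should plug in the precomputed values of $|K^i_j|$, $d(K^i_j,K^i_{j+1})$, and the sup-distances listed just before the lemma --- which is exactly the calculation you spell out. One small clarification: once $a$ is restricted to $J_1$ (resp.\ $J_2$), the previous iterate lies in a single fixed band, so $\theta_2^{b,x}$ (resp.\ $\theta_3^{b,x}$) is genuinely affine on the \emph{whole} of $J_1$ (resp.\ $J_2$), not merely ``on each sublap''; in particular, since $B^-=0$ at $b=1$, the image $\theta_2^{b,x}(J_1)$ is actually independent of $x$ and covers all three bands $K^2_j$.
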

\begin{proof}
    The proof is straightforward calculation if one remembers that under the conditions, $\theta_2^{b,x}|_{J_1}$ and $\theta_3^{b,x}|_{J_1} $ are  linear maps with slope $5$ and slope $ 21$ respectively and uses the information just above the lemma's statement.
\end{proof}

\begin{lemma}\label{use int line}
     Let $b\lesssim 1$ be such that $SM^4(b) \neq \emptyset$.
    If there exists some $(a,x) \in SM^4(b)$ such that $\gamma(a,x)> \xi_1$. Then $f_{a,b}$  has a order 4 periodic point with single minus itinerary.
    
\end{lemma}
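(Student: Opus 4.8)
<br>

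The plan is to hold $b$ and the point $x$ fixed and to sweep the parameter across the arc $U:=\bigl(a-\gamma(a,x),\,a+\gamma(a,x)\bigr)\subset\mathbb{T}$. The hypothesis $\gamma(a,x)>\xi_1$ is tailored so that, for every $\alpha\in U$, the $4$-step itinerary of $x$ under $f_{\alpha,b}$ stays equal to the single--minus word $(-,+,+,+)$, while the orbit position $f^4_{\alpha,b}(x)$ is simultaneously driven around $\mathbb{T}$ more than twice as $\alpha$ traverses $U$. Consequently there is $a^\ast\in U$ with $f^4_{a^\ast,b}(x)=x$, so that $x$ becomes an order--$4$ periodic point of $f_{a^\ast,b}$ of the required combinatorial type; here $a^\ast$ lies within $\gamma(a,x)$ of the given $a$.

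First I would invoke Lemma~\ref{setting up1} (whose preamble hypotheses $\tfrac{2-b}{2}<a<\tfrac12+\tfrac b2$ and $F_{a,b}(I_-)\subset(2,2.5)$ are in force in the situation where this lemma is applied, since $b\lesssim1$ and $A^+\in(0,\tfrac12)$). Because $\gamma(a,x)\le\gamma_i(a,x)$ for $i=1,2,3$, parts (1)--(3) of that lemma give $(\alpha,x)\in\mathrm{SM}^4(b)$ for all $\alpha$ in the closed arc $\overline{U}$; moreover, as $U$ is open, displacing $\alpha$ within $U$ moves $f^i_{\alpha,b}(x)$ by strictly less than $\gamma_i(a,x)\sum_{l=0}^{i-1}(B^+)^{l}=f^i_{a,b}(x)\wedge\bigl(\tfrac12-f^i_{a,b}(x)\bigr)$, so each of $f_{\alpha,b}(x),f^2_{\alpha,b}(x),f^3_{\alpha,b}(x)$ stays in the interior of $I_+$, while $x$ (with $\tfrac12<x<1$, since $\gamma(a,x)\le x-\tfrac12$ and $\le1-x$) stays in the interior of $I_-$. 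Thus along this orbit no branch of $f_{\alpha,b}$ switches as $\alpha$ varies in $U$, so the continuous circle map $\Psi\colon\alpha\mapsto f^4_{\alpha,b}(x)$ has a constant derivative: each extra iterate through the expanding branch multiplies the $\alpha$-derivative of the previous iterate by $B^+$ and adds $1$, whence $\partial_\alpha\Psi=\sum_{l=0}^{3}(B^+)^{l}=(B^+)^3+(B^+)^2+B^++1=\xi_1^{-1}$, and any lift of $\Psi|_U$ to $\mathbb{R}$ is affine of slope $\xi_1^{-1}$.

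It follows that the image of $\Psi|_U$, traced at winding rate $\xi_1^{-1}$ over an arc of length $2\gamma(a,x)$, wraps around $\mathbb{T}$ through total arc-length $2\gamma(a,x)\,\xi_1^{-1}>2$; a path of length exceeding $1$ meets every point of $\mathbb{T}$, so there is $a^\ast\in U$ with $\Psi(a^\ast)=f^4_{a^\ast,b}(x)=x$ (equivalently, by Lemma~\ref{lift}, $F^4_{a^\ast,b}(x)=x+k$ for some $k\in\mathbb{Z}$; this is the circle-analogue of Lemma~\ref{int line}, the translated graph of the fourth iterate meeting the translated diagonal). Then $x$ is periodic for $f_{a^\ast,b}$ of period dividing $4$ with $4$-step itinerary $(-,+,+,+)$; a single--minus word of length $4$ is not a power of a shorter word (period $1$ would force $(-,-,-,-)$, period $2$ an even number of $-$'s), so $x$ has exact period $4$, single--minus itinerary, and $x\in I_-$. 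Since the orbit avoids $0$ and $\tfrac12$, its multiplier equals $B^-(B^+)^3$, which is $<1$ for $b\lesssim1$, so the cycle is attracting; this yields the asserted order--$4$ single--minus periodic point for $f_{a^\ast,b}$ (the map the statement denotes $f_{a,b}$).

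The one genuine computation is the constant derivative $\xi_1^{-1}$ of $\alpha\mapsto f^4_{\alpha,b}(x)$, and the only delicate point is that the branches of $f_{\alpha,b}$ along this orbit remain frozen over the whole sweep — which is precisely what the margin inequality $\gamma(a,x)>\xi_1$, fed through Lemma~\ref{setting up1}, guarantees. That verification is the main (though routine) obstacle; the remainder is a one--line length count together with the elementary combinatorics of the itinerary word.
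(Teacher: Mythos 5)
Your proposal is correct and follows essentially the same route as the paper: the margin $\gamma(a,x)>\xi_1$ fed through Lemma \ref{setting up1} freezes the itinerary of $x$ over an $a$-interval of length $>2\xi_1$, on which $a\mapsto F^4_{a,b}(x)$ is affine of slope $\xi_1^{-1}$, so its value sweeps more than a full circle and hits $x$ modulo $1$ (the paper cites Lemma \ref{int line} for this last step, your winding/intermediate-value phrasing is the same computation). Your extra verifications (exact period $4$ from the single-minus word and the multiplier $B^-(B^+)^3<1$) only make explicit what the paper leaves implicit.
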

\begin{proof}
 Using Lemma \ref{setting up1}, there is an interval $(a-\delta,a+\delta)\subset\mathbb{T}$ with $\delta>\xi_1$ and  $\theta_4^{b,x}|_{(a-\delta,a+\delta)}$ is a line of slope equal to $1/\xi_1$. Since $|(a-\delta,a+\delta)|=2\delta$ and $2\delta \times 1/\xi_1 \gtrsim 2 $, we are done by Lemma \ref{int line}. 
\end{proof}

We are ready for our final result.

\begin{proposition}
    There exists an interval $(1-\epsilon,1)$ so that for all $b \in (1-\epsilon,1) $ there exists an interval $I_b \subset \mathbb{T}$ such that for all $a \in I_b$ there is some  $ x\in I_{-}$ so that $x$ is a single minus itinerary, attracting periodic point of order 4 for $f_{a,b}$.
\end{proposition}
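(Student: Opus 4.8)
The plan is to assemble the pieces already developed in this section into a single existence argument. The target statement says: there is $\epsilon > 0$ such that for every $b \in (1-\epsilon,1)$ there is a nontrivial interval $I_b \subset \mathbb{T}$ so that every $a \in I_b$ produces an attracting period-$4$ orbit with single minus itinerary. First I would invoke Lemma \ref{set up 2}: it gives $\epsilon > 0$ such that for each $b \in (1-\epsilon,1)$ there is a base point $(a_0, x) \in \mathbb{T}^2$ lying in $\mathrm{SM}^4(b)$ with the crucial gap estimate $\gamma(a_0, x) > \xi_1$. The point $x$ is fixed throughout (it is the same $x$ for all nearby $a$); what varies is the parameter $a$.

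Next I would apply Lemma \ref{setting up1}. Its three items, taken together with $\gamma(a_0,x) > \xi_1$, say exactly that for every $\alpha$ with $|\alpha - a_0| \le \gamma(a_0,x)$ we still have $(\alpha, x) \in \mathrm{SM}^4(b)$, i.e. the chosen $x$ keeps the itinerary $(-,+,+,+,\dots)$ through the first three iterates. Because $\gamma(a_0,x) > \xi_1$, this gives an interval $(a_0 - \delta, a_0 + \delta)$ with $\delta > \xi_1$ on which the itinerary condition is maintained; on this interval $\theta_4^{b,x}$ is a linear translation in $a$ with slope $1/\xi_1 = B^{+^3} + B^{+^2} + B^+ + 1$ (by the slope bookkeeping used in Lemma \ref{setting up1} and its surrounding discussion). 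Then Lemma \ref{use int line} is exactly the statement that this configuration forces a period-$4$ single-minus attracting periodic point; but to get an \emph{interval} $I_b$ of parameters rather than a single parameter, I would argue as in the period-$3$ case (Proposition \ref{existence of period 3 tongue}): the image $\theta_4^{b,x}\big((a_0-\delta, a_0+\delta)\big)$ has length $2\delta/\xi_1 \gtrsim 2 > 1$, so it covers a full unit and then some, hence the set of $a$ in $(a_0-\delta,a_0+\delta)$ for which $\theta_4^{b,x}(a) = x + k$ (for the appropriate integer $k$ coming from the lift) — equivalently $f_{a,b}^4(x) = x$ — contains a nondegenerate subinterval $I_b$, using Lemma \ref{int line} applied to the line $\theta_4^{b,x}$ restricted to this interval against the translated diagonal. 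For every $a \in I_b$, the point $x$ is then a genuine fixed point of $f_{a,b}^4$ with itinerary $(-,+,+,+)$, so it has minimal period $4$ (the itinerary is not a proper power), and its multiplier is $B^- (B^+)^3 < 1$ for $b$ close to $1$, so it is attracting.

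The main obstacle is the bookkeeping around the lift and the "$\mod 1$" ambiguities: $\theta_i^{b,x}(a) = F_{a,b}^i(x)$ lives in $\mathbb{R}$, and one must be careful that as $a$ ranges over $(a_0 - \delta, a_0 + \delta)$ the correct branch of $F_{a,b}$ is being applied at each of the four steps (this is why Lemma \ref{set up 2} restricts $a$ so that $b/2 < a < 1/2 + b/2$ and why $F_{a,b}(I_-) \subset (2,2.5)$ is imposed), and that the integer $k$ in the equation $\theta_4^{b,x}(a) = x + k$ is constant on the relevant subinterval. The slope computations are routine once one accepts the inductive claim that the itinerary is preserved, so the real content is verifying that the interval on which the itinerary is preserved is genuinely longer than $\xi_1$ uniformly in $b \in (1-\epsilon,1)$ — which is precisely what Lemma \ref{set up 2} delivers — and that the period-$3$-style covering argument then goes through verbatim. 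I would therefore keep the proof short: cite Lemma \ref{set up 2} for the base point and the gap, cite Lemma \ref{setting up1} to widen it to an interval of parameters of length $> 2\xi_1$ with fixed itinerary, and cite Lemma \ref{use int line} / Lemma \ref{int line} together with the expansion rate $1/\xi_1 > 1/2$ to extract the subinterval $I_b$ of genuine periodic parameters, finally noting attractivity from $B^-(B^+)^3 < 1$.
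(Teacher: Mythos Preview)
Your assembly of Lemma~\ref{set up 2} and Lemma~\ref{use int line} is exactly the paper's proof, and your slope bookkeeping is right. There is, however, a genuine slip in how you extract the interval $I_b$. With $x$ held fixed, the equation $\theta_4^{b,x}(a) = x + k$ is affine in $a$ with nonzero slope $1/\xi_1$, so for each integer $k$ it has \emph{exactly one} solution; letting $k$ range over $\mathbb{Z}$ therefore yields a discrete set of $a$-values, never a nondegenerate interval. Your sentence ``For every $a \in I_b$, the point $x$ is then a genuine fixed point of $f_{a,b}^4$'' cannot hold with the same $x$ across an interval of parameters, and the appeal to Lemma~\ref{int line} in the $(a,\theta_4^{b,x}(a))$-plane is misapplied: that lemma is about the line meeting the diagonal $y=x$ in the \emph{dynamical} variable, not a horizontal target $y = x + k$ in the parameter direction.

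The repair is easy and already latent in the period-$3$ argument you invoke. Do not fix $x$ and solve for $a$; instead, for each $a$ near the distinguished $a_0$ the map $y \mapsto F_{a,b}^4(y)$ on the region of itinerary $(-,+,+,+)$ is a line of slope $c = B^-(B^+)^3 < 1$ in $y$ with $a$-dependent intercept $T(a)$, and Lemma~\ref{int line} (applied in the $(y,F_{a,b}^4(y))$-plane) gives a fixed point $x(a) \in J$ precisely when $T(a) \in (1-c)J$; since $T$ is affine in $a$, this condition carves out a genuine interval $I_b$, with the periodic point $x(a)$ moving with $a$. Alternatively and more cheaply: once Lemma~\ref{use int line} hands you a single parameter $a'$ with an attracting period-$4$ cycle, Claim~\ref{imlicit} (equivalently Proposition~\ref{stable componenet open}, openness of $\mathcal{H}$) immediately furnishes an open interval $I_b \ni a'$ on which the attracting cycle persists with the same itinerary. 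The paper's one-line proof tacitly relies on one of these mechanisms without spelling it out.
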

\begin{proof}
    By Lemma \ref{set up 2} there is some $\epsilon$ so that if $b \in (1-\epsilon, 1)$ then hypotheses of Lemma \ref{use int line} is satisfied and we are done.
\end{proof}

\begin{corollary}
    There is a tongue of order 4.
\end{corollary}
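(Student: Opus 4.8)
The plan is to read the corollary off from the preceding Proposition together with the openness of the hyperbolic set and a short continuity argument at the ceiling $\{b=1\}$. Fix the slope itinerary $\sigma=(-,+,+,+)$, which is the only length-$4$ single minus itinerary up to cyclic permutation, and let $P_\sigma\subset\mathbb{T}\times[0,1)$ be the set of parameters whose (unique, by Theorem \ref{uniqueness of periodic attractor}) attracting orbit has slope itinerary $\sigma$. The Proposition produces, for every $b\in(1-\epsilon,1)$, a nonempty arc $I_b\subset\mathbb{T}$ with $I_b\times\{b\}\subset P_\sigma$, so $P_\sigma\neq\emptyset$. That $P_\sigma$ is open is exactly the reasoning of Claim \ref{imlicit} and Proposition \ref{stable componenet open}: the constraints $x\in\mathrm{int}(I_-)$ and $f_{a,b}^{\,i}(x)\in\mathrm{int}(I_+)$ for $i=1,2,3$ are open; the period-$4$ point persists continuously since $f_{a,b}^4$ is affine along the orbit with slope $B^-(B^+)^3=16(1-b)(1+b)^3<1$ in the relevant range, so the local degree is $\pm1$ and the implicit function theorem (Theorem 4.1 of \cite{BGD}) applies; and attractingness is precisely that slope being $<1$.

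The one substantive point is to show that $\overline{P_\sigma}$ meets $\mathbb{T}\times\{1\}$ in a nondegenerate interval, which is where the explicit $b=1$ estimates in Lemmas \ref{set up 2}, \ref{nest} and \ref{use int line} are used. All the quantities there — the $\xi_i(b)$, the lengths $|K^j_i|$ and their gaps, and hence the half-length $\delta>\xi_1(b)$ of the arc $I_b$ delivered by Lemma \ref{use int line} — are continuous in $b$ and bounded away from $0$ as $b\to1$ (at $b=1$, $\xi_1=\tfrac1{85}$). Hence, after shrinking $\epsilon$, there is a fixed nondegenerate arc $I_\ast\subset\mathbb{T}$ with $I_\ast\subset I_b$ for all $b\in(1-\epsilon,1)$, so $I_\ast\times(1-\epsilon,1)\subset P_\sigma$ is connected and lies in a single connected component $S$ of $P_\sigma$, with $\overline S\supset\overline{I_\ast}\times\{1\}$. (Equivalently, one can apply Theorem 4.1 of \cite{BGD} directly at a parameter $(a_\ast,1)$ realizing the limiting degenerate $b=1$ configuration, where $f_{a,1}|_{I_-}$ is constant, hence $f_{a,1}^4$ is locally constant near the period-$4$ point and $f_{a,1}^4(x)-x=0$ still has local degree $\pm1$; this continues the orbit to all $(a,b)$ near $(a_\ast,1)$, and for $b<1$ it is attracting with itinerary $\sigma$, the set of admissible $a_\ast$ at $b=1$ being open and nonempty by the construction.)

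Thus $S$ is a connected component of the set of parameters with slope itinerary $\sigma$ whose closure meets the ceiling in a nondegenerate interval, i.e. $S$ is a tongue by definition, and its order is the length $4$ of $\sigma$; this is the claimed tongue of order $4$. (Alternatively, once $P_\sigma\neq\emptyset$ is known from the Proposition, Theorem \ref{tongue esist} immediately gives that every connected component of $P_\sigma$ is a tongue, necessarily of order $4$.) The main obstacle is precisely the ceiling step: one must verify that the parameter arcs $I_b$ do not collapse as $b\to1$ and that they align into a genuine interval along $b=1$, which is what the uniform, explicitly computed estimates underlying Lemma \ref{nest} secure.
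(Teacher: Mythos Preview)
Your proof is correct and follows essentially the same approach as the paper: both argue that, by the continuity of the construction underlying Lemmas \ref{setting up1}, \ref{set up 2}, \ref{nest}, \ref{use int line}, the intervals $I_b$ vary continuously and do not collapse as $b\to 1$, so they sit inside a single connected component of $P_\sigma$ whose closure meets $\mathbb{T}\times\{1\}$ in a nondegenerate arc. Your write-up is more explicit about this limiting step (fixing a common $I_\ast\subset I_b$ via the uniform $b=1$ estimates) than the paper's one-sentence proof, but the substance is the same. One small caution: your parenthetical shortcut via Theorem \ref{tongue esist} is circular in the paper's logical order, since that theorem is proved by generalizing the period-$4$ argument; your main argument, however, does not depend on this.
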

\begin{proof}
   By  the very design of the Lemmas \ref{setting up1}, \ref{set up 2}, \ref{nest}, \ref{use int line} the variation of $b$ in the suitable range will result the intervals $I_b$ to vary continuously and their length only increase as $b \to 1$. The tongue is a open, connected component of $\mathcal{H}$ so these $I_b$'s will continuously move while staying inside the component, giving us the result.
   \end{proof}

\subsubsection{\textbf{General algorithm for any $n$ and proof of Theorem \ref{tongue esist}}}
\vspace{.3cm}
Let $n \geq 4$ be a fixed integer and $b \lesssim 1$.
 \begin{itemize}
    
    \item Define $\xi_i(=\xi_i(b)), i=1,2,3,...,n$ as follows,  
\[ \xi_1=\dfrac{1}{B^{+^{n-1}}\ldots+B^{+^{2}}+B^++1}, \text{ } \xi_2= (B^++1) \xi_1, \text{ }\xi_3=(B^{+^{2}}+B^++1)\xi_1,\]
 \[\ldots,  \xi_{n-1}=(B^{+^{n-2}}\ldots+B^{+^{2}}+B^++1)\xi_1.
 \]
    \item Define the following maps,
\[
\theta_{i}^{b,x}: \mathbb{T} \to \mathbb{R}, \text{ }\theta_{i}^{b,x}(a)=F_{a,b}^{i}(x); \text{ } i=1,2,\ldots, n.
\]
\item For $(n-1)\geq m \geq 1$ define 
\[
K_j^{m}=\displaystyle\Bigl((2^m-1+j)+\xi_m \text{ }, \text{ }(2^m-1+j)+1/2-\xi_m\Bigr); \text{ }1 \leq j \leq 2^m-1.
\]

\end{itemize}
\begin{sublemma}\label{1st stage range for a}
If $x \in (1/2+\xi_1,1-\xi_1)$ then 
\[
(1/2+\xi_1-B^-x,1-\xi_1-B^-x) \subset \displaystyle\Bigl((2-b)/2,b/2+1/2\Bigr) \text{  as } b \to 1.
\]
 
\end{sublemma}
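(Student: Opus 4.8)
The plan is to verify the inclusion directly at the limit $b=1$, where it holds with a definite margin on both sides, and then to propagate it to a one-sided neighbourhood $(1-\epsilon,1]$ of $b=1$ by continuity of the four endpoints in $b$, checking that the dependence on the free variable $x$ is harmless.

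First I would record the limiting data as $b\to 1$. Since $B^-=2(1-b)\to 0$ and $B^+=2(1+b)\to 4$, the quantity $\xi_1=1/(B^{+^{n-1}}+\cdots+B^++1)$ converges to $\xi_1^{*}:=3/(4^n-1)$, which is positive and, because $n\ge 4$, satisfies $\xi_1^{*}<1/4$. Consequently the target interval $\bigl((2-b)/2,\,b/2+1/2\bigr)$ converges to $(1/2,1)$, while the left-hand interval converges to $(1/2+\xi_1^{*},\,1-\xi_1^{*})$, which is compactly contained in $(1/2,1)$, both boundary gaps being $\xi_1^{*}>0$. So at $b=1$ the inclusion holds with room to spare; moreover the condition $\xi_1<1/4$ guarantees that, for $b$ near $1$, the left-hand interval is a genuine nondegenerate subarc of $(1/2,1)$ that does not wrap around $\mathbb T$.

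Next I would reduce the inclusion of open intervals to the two scalar inequalities $(2-b)/2\le 1/2+\xi_1-B^-x$ and $1-\xi_1-B^-x\le b/2+1/2$. Using $B^-=2(1-b)$ these rearrange respectively to $(1-b)(1/2+2x)\le\xi_1$ and $(1-b)/2\le\xi_1+B^-x$. Since $0<x<1-\xi_1<1$ we have $1/2+2x<5/2$, and since $B^-x\ge 0$, both inequalities follow from the single bound $(1-b)\cdot(5/2)\le\xi_1$; because $\xi_1\to\xi_1^{*}>0$, this bound holds for all $b$ in some interval $(1-\epsilon,1]$. The only point needing any care is uniformity in $x$, and this is immediate from $B^-x\le B^-=2(1-b)$ for every $x\in[0,1]$, so all the estimates above are independent of the particular $x\in(1/2+\xi_1,1-\xi_1)$. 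This is just the $n$-variable analogue of the $b\to1$ computation already used in the proof of Lemma~\ref{set up 2}, and it will feed into the general nesting construction (the analogue of Lemma~\ref{nest}) in exactly the same way.
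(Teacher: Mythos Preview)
Your proof is correct and follows the same line the paper intends: the paper gives no explicit argument for this sublemma (nor for the identical assertion inside Lemma~\ref{set up 2}), merely declaring it ``clear'' upon letting $b\to 1$, and your write-up is precisely the verification that makes this clear---check the inclusion at $b=1$ where $B^-=0$ gives a margin of $\xi_1^*>0$ on each side, then observe that the two endpoint inequalities depend on $b$ and $x$ only through terms of order $(1-b)$, uniformly for $x\in[0,1]$. There is nothing to add.
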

Fix some  $b\in (1/2,1)$ such that $B^{+^{n-1}}B^{-}<1$. 
Now we give the algorithm. Let $x \in (1/2+\xi_1,1-\xi_1)$ be arbitrary.  
\begin{itemize}
    \item Take the interval  $J_1=(1/2+\xi_1,1-\xi_1) \subset \mathbb{T}$ ; $|J_1|=1/2-2\xi_1$ so that $\theta_1^{b,x}(J_1)=(2+\xi_1,2.5-\xi_1)$. Observe that $J_1 \subset (b/2,1/2+b/2)$ by Sublemma \ref{1st stage range for a}.
    \item Check if $(B^++1)|J_1|>(1.5-2\xi_2)$. If yes
    then there exists $J_2\subset J_1, |J_2|=\dfrac{1/2-2\xi_2}{B^++1}$ and $\theta_2^{b,x}(J_2)=K^2_j$ , for some $1 \leq j \leq 2^2-1$. 
    \item Check if $(B^{+^{2}}+B^++1)|J_2|>(1.5-2\xi_3)$. If yes then there exists $J_3\subset J_2$, $|J_3|=\dfrac{1/2-2\xi_3}{(B^{+^{2}}+B^++1)}$ and $\theta_3^{b,x}(J_3)=K_j^{3}$ for some $1 \leq j \leq 2^3-1$. 
    \item Check if $(B^{+^{3}}+B^{+^{2}}+B^++1)|J_3|>(1.5-2\xi_4)$. If yes then there exists
    \vspace{.2cm}
    $J_4\subset J_3$, $|J_4|=\dfrac{1/2-2\xi_4}{(B^{+^{3}}+B^{+^{2}}+B^++1)}$ and $\theta_4^{b,x}(J_4)=K_j^{4}$ for some $1 \leq j \leq 2^4-1$. 
    \end{itemize}
    \small{
    \begin{itemize}
        \item [\textbf{.}]
    \end{itemize}
     \begin{itemize}
        \item [\textbf{.}]
    \end{itemize}
     \begin{itemize}
        \item [\textbf{.}]
    \end{itemize}
    %\item [\ldots]}
   
    \begin{itemize}
    \item Check if  $(B^{+^{n-2}}+\ldots+B^{+^{2}}+B^++1)|J_{n-2}|>(1.5-2\xi_{n-1})$. If yes then there exists $J_{n-1}\subset J_{n-2}$ so that  $|J_{n-1}|=\dfrac{1/2-2\xi_{n-1}}{(B^{+^{n-2}}+\ldots+B^{+^{2}}+B^++1)}$ and $\theta_{n-1}^{b,x}(J_{n-1})=K_j^{n-1}$ for some $1 \leq j \leq 2^{n-1}-1$  [$(n-1)$-st stage].

\end{itemize}
\vspace{.5cm}

Now it is clear that given any $n$, we can follow this algorithm if $b$ is close enough to $1$ by observing that,
\[
\dfrac{B^{+^{(j+1)}}+...+B^{+^{2}}+B^++1}{B^{+^{j}}+...+B^{+^{2}}+B^++1}>B^+ \text{, for all } b \in [1/2,1], j \in \mathbb{N} \cup \{0\}.
\]

 Now by formulating lemmas similar to the period 4 case, we have a proof of  Theorem \ref{tongue esist}.

\subsubsection{\textbf{All types are realised}}
\vspace{.4cm}

\begin{lemma}(\cite{D})\label{for all types} The following are true.
    \begin{itemize}
        \item $(a,b,x) \in \mathbb{R}\times [0,1]\times\mathbb{R}$ and $n\in \mathbb{N}$ then
    \[
    F_{a+1,b}^n(x)=F_{a,b}^n(x)+2^n-1.
    \]
    \item Let $n \in\mathbb{N}; a,b \in \mathbb{T}, k <2^n-1$ and $x=(X \text{ mod }1) \in \mathbb{T}$ with  $F^n_{a,b}(X)=X+k$, then $\phi_{a,b}(x)=\dfrac{k}{2^n-1}$.
    \end{itemize}
   
\end{lemma}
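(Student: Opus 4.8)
\textbf{Proof proposal for Lemma \ref{for all types}.}

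The plan is to prove both statements by induction on $n$, deriving the first directly from the defining recursion for $F_{a,b}$ and then using it, together with the semiconjugacy from Lemma \ref{Semiconj}, to obtain the second.

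For the first bullet, recall that $F_{a,b}$ is obtained from $\tilde f_{a,b}$ by the degree-two extension, and that $\tilde f_{a,b}(x) = (2x + a + \tfrac{b}{2}S(x)) + c(a,b)$ where $c(a,b)\in\{0,1\}$ is the lift-correction constant making $F_{a,b}(0)\in[0,1)$. The key observation for the base case $n=1$ is that increasing $a$ to $a+1$ changes the raw expression $2x+a+\tfrac b2 S(x)$ by exactly $1$; since $F_{a+1,b}(0)$ must again lie in $[0,1)$, the correction constant compensates, and one checks $F_{a+1,b}(x) = F_{a,b}(x) + 1 = F_{a,b}(x) + (2^1-1)$ for all $x$ (using that $S$ is $1$-periodic when we view $x\in\mathbb{T}$, and the degree-two relation $F_{a,b}(x+k)=F_{a,b}(x)+2k$ of Lemma \ref{lift} to extend from $[0,1]$ to all of $\mathbb{R}$). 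For the inductive step, assume $F^{n}_{a+1,b}(x) = F^n_{a,b}(x) + 2^n-1$. Then
\[
F^{n+1}_{a+1,b}(x) = F_{a+1,b}\bigl(F^n_{a,b}(x) + 2^n-1\bigr) = F_{a+1,b}\bigl(F^n_{a,b}(x)\bigr) + 2(2^n-1),
\]
using the degree-two relation of Lemma \ref{lift} to pull the integer $2^n-1$ out. Applying the base case to the first summand gives $F_{a,b}(F^n_{a,b}(x)) + 1 + 2(2^n-1) = F^{n+1}_{a,b}(x) + 2^{n+1}-1$, completing the induction.

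For the second bullet, suppose $F^n_{a,b}(X) = X+k$ with $k<2^n-1$. Iterating the functional equation $\Phi_{a,b}\circ F_{a,b} = 2\Phi_{a,b}$ from Lemma \ref{Semiconj}(4) gives $\Phi_{a,b}\circ F^n_{a,b} = 2^n \Phi_{a,b}$, so $\Phi_{a,b}(X+k) = 2^n\Phi_{a,b}(X)$. By Lemma \ref{Semiconj}(3), $\Phi_{a,b}(X+k) = \Phi_{a,b}(X) + k$, hence $(2^n-1)\Phi_{a,b}(X) = k$, i.e. $\Phi_{a,b}(X) = k/(2^n-1)$. Passing to the circle via Lemma \ref{Semiconj}(5), $\phi_{a,b}(x) = \Phi_{a,b}(X)\sslash 1 = k/(2^n-1)$ since $0 \le k < 2^n-1$.

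The only genuinely delicate point is the bookkeeping for the base case of the first bullet: one must track the lift-correction constant $c(a,b)$ carefully across the wrap-around $a\mapsto a+1$ and confirm it shifts by exactly $-1$ so that the net change in $F_{a,b}$ is $+1$ rather than $0$ or $+2$. Everything after that is a routine unwinding of the relations already recorded in Lemmas \ref{lift} and \ref{Semiconj}, so I do not expect further obstacles.
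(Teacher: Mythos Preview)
The paper does not supply its own proof of this lemma; it is quoted from \cite{D} (Dezotti) and used as a black box. Your inductive argument for the first bullet and your semiconjugacy computation for the second bullet are the standard arguments and are structurally correct.

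There is, however, a genuine muddle in your ``delicate point'' about the base case. With the \emph{normalized} lift defined in the paper (the one with the correction $c(a,b)\in\{0,1\}$ forcing $F_{a,b}(0)\in[0,1)$), shifting $a\mapsto a+1$ raises the raw formula by $+1$ while the correction drops by $1$, so the net change in $F_{a,b}$ is $0$, not $+1$; in other words $F_{a+1,b}=F_{a,b}$ identically, and the first bullet would be false. Your sentence ``confirm it shifts by exactly $-1$ so that the net change in $F_{a,b}$ is $+1$'' is therefore arithmetically inconsistent. The lemma is meant to be read with $a\in\mathbb{R}$ and the \emph{un-normalized} lift $\tilde f_{a,b}(x)=2x+a+\tfrac{b}{2}S(x)$ (no correction term), extended by the degree-two rule; then $F_{a+1,b}(x)=F_{a,b}(x)+1$ is immediate and your induction goes through cleanly. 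Once you make that adjustment, there is nothing delicate left: the base case is a one-line identity, and the rest of your argument (Lemma \ref{lift} for the inductive step, Lemma \ref{Semiconj}(3)--(5) for the second bullet) is correct as written.
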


\begin{theorem}
    Let $\tau$ be a periodic point of $x \to 2x \sslash 1$. Then there exists a tongue with elements consisting of parameters of type $\tau$.
\end{theorem}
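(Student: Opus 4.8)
The plan is to run the same $n$-stage nesting construction that produces period-$n$ single-minus tongues, but to keep track of the integer parts picked up along the orbit and to tune the combinatorial choices so that the orbit closes up with the prescribed winding number modulo $2^n-1$. Write $\tau=k_0/(2^n-1)$, where $n$ is the exact period of $\tau$ under $D$; the case $n=1$ (so $\tau=0$) is realized directly by any parameter with an attracting fixed point in $I_-$, so assume $n\ge 2$ and $1\le k_0\le 2^n-2$. Suppose $x_0\in I_-$ has the single-minus itinerary $(-,+,\dots,+)$ of length $n$; put $y_m=f^m_{a,b}(x_0)$, regard $X_0=x_0$ inside $[1/2,1)\subset\mathbb R$, and set $\ell_m=[F^m_{a,b}(X_0)]$ and $d_m=\lfloor\widetilde f_{a,b}(y_m)\rfloor$. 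From Lemma \ref{lift} one reads off $\widetilde f_{a,b}(y_{m-1})=y_m+d_{m-1}$, hence the recursion $\ell_m=2\ell_{m-1}+d_{m-1}$ with $\ell_0=0$, so $\ell_n=\sum_{i=0}^{n-1}2^{n-1-i}d_i$ and $F^n_{a,b}(X_0)=X_0+\ell_n$. Applying Lemma \ref{for all types} to the integer translate $X_0-\lfloor\ell_n/(2^n-1)\rfloor$ gives $\phi_{a,b}(x_0)=\bigl(\ell_n\bmod(2^n-1)\bigr)/(2^n-1)$, so it suffices to produce a parameter with such an attracting orbit and with $\ell_n\equiv k_0\pmod{2^n-1}$. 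In the construction $F_{a,b}(x_0)=\theta_1^{b,x_0}(a)$ lands in $K^1_1\subset(2,2.5)$, so $d_0=2$; then $d_0\,2^{n-1}=2^n\equiv1$ and $\ell_n\equiv 1+\sum_{i=1}^{n-1}2^{n-1-i}d_i\pmod{2^n-1}$, with all $d_1,\dots,d_{n-1}\in\{0,1,2\}$.

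Next I would revisit the nesting lemmas, Lemmas \ref{setting up1}, \ref{set up 2}, \ref{nest}, \ref{use int line} for $n=4$ and their general-$n$ analogues. Fixing $x\in(1/2+\xi_1,1-\xi_1)$ and $b$ close to $1$, the algorithm builds $J_1\supset\dots\supset J_{n-1}$ with $\theta_m^{b,x}(J_m)=K^m_{j_m}$; comparing $[F^m_{a,b}(x)]=2^m-1+j_m$ with the recursion above yields $j_m=1+\sum_{i=1}^{m-1}2^{m-1-i}d_i$, so choosing the target box at stage $m$ is precisely choosing the digit $d_{m-1}$ (given $d_1,\dots,d_{m-2}$). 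The content of the design of those lemmas is that, for $b$ near enough to $1$, the image of $\theta_m^{b,x}$ on $J_{m-1}$ sweeps across the whole union $\bigcup_{j=1}^{2^m-1}K^m_j$, so every admissible $j_m$, equivalently every value $d_{m-1}\in\{0,1,2\}$, is attainable while keeping the single-minus itinerary; hence $d_1,\dots,d_{n-2}$ may be prescribed arbitrarily in $\{0,1,2\}$. At the last stage $\theta_n^{b,x}$ is linear of slope $\xi_1^{-1}$ on $(a-\gamma(a,x),a+\gamma(a,x))$, where $\ell_{n-1}$ is constant and the image has length $>2$; it therefore meets $\{y=x+k\}$ for at least two consecutive integers $k$, and choosing $x$ so that $\widetilde f_{a,b}(y_{n-1})$ sits near the centre of its range forces those two to be $2\ell_{n-1}$ and $2\ell_{n-1}+1$, i.e.\ $d_{n-1}\in\{0,1\}$.

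Now the arithmetic: given $k_0\in\{1,\dots,2^n-2\}$, choose $d_1,\dots,d_{n-1}$ with $\sum_{i=1}^{n-1}2^{n-1-i}d_i=k_0-1$. This is possible because, as $d_1,\dots,d_{n-2}$ range over $\{0,1,2\}$, the partial sum $\sum_{i=1}^{n-2}2^{n-1-i}d_i$ realizes every even integer in $[0,2^n-4]$, and then $d_{n-1}\in\{0,1\}$ fills in to give every integer in $[0,2^n-3]\ni k_0-1$. For this choice $\ell_n\equiv1+(k_0-1)=k_0\pmod{2^n-1}$, so the attracting orbit we have built — of exact period $n$, since the word $(-,+,\dots,+)$ is primitive for every $n$ — satisfies $\phi_{a,b}(x_0)=k_0/(2^n-1)=\tau$. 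By Theorem \ref{tongue esist} the connected component of the parameter set carrying this single-minus itinerary is a tongue, and by Theorem \ref{Tongue open} its type is constant on that component and equal to $\tau$; this is the required tongue.

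The hard part will be making the two middle claims rigorous for arbitrary $n$. First, that $\theta_m^{b,x}|_{J_{m-1}}$ really sweeps across all $2^m-1$ boxes $K^m_j$ once $b$ is close enough to $1$: in the text this is checked numerically for $m\le 4$, where the sweep length matches the total span almost exactly, and for general $m$ it should follow from the monotonicity $(B^{+^{j+1}}+\dots+1)/(B^{+^{j}}+\dots+1)>B^+$ together with the explicit lengths of $J_{m-1}$ and of the gaps between consecutive boxes. Second, that the digit choices are genuinely independent across stages and that the final stage supplies the freedom $d_{n-1}\in\{0,1\}$; here it is precisely the strict inequality ``image of $\theta_n^{b,x}$ has length $>2$'' that guarantees more than one admissible closing-up value of $k$. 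Everything else is the translation-and-slope bookkeeping already carried out for $n=3$ and $n=4$.
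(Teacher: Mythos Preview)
Your approach has a genuine gap, and it is also very different from the paper's two-line argument. The paper does not re-enter the nesting construction at all: it simply combines the monotonicity of $a\mapsto F_{a,b}^{\,n}(x)$ with the shift formula $F_{a+1,b}^{\,n}(x)=F_{a,b}^{\,n}(x)+(2^n-1)$ from Lemma~\ref{for all types}. As $a$ runs over a unit interval the quantity $F_{a,b}^{\,n}(x)-x$ increases continuously by exactly $2^n-1$, so it attains every integer in a block of $2^n-1$ consecutive values, and the second bullet of Lemma~\ref{for all types} converts each such integer into a type $k/(2^n-1)$. No digit bookkeeping is needed; the global monotonicity sweeps through all residues at once.

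The gap in your route is the assertion that at each stage $m\ge2$ the image $\theta_m^{b,x}(J_{m-1})$ is long enough to make $d_{m-1}\in\{0,1,2\}$ freely choosable. This is true only at $m=2$, where the image coincides exactly with the convex hull of $K^2_1\cup K^2_2\cup K^2_3$; the paper never claims more, and Lemma~\ref{nest}(3) explicitly says ``for \emph{some} $j\in\{1,\dots,7\}$''. Already for $n=4$, $m=3$, $b=1$ one has $|\theta_3^{b,x}(J_2)|=21\cdot\tfrac{13}{170}=\tfrac{273}{170}\approx1.606$, whereas meeting both $K^3_{2j_2-1}$ and $K^3_{2j_2+1}$ requires image length at least $\tfrac{3}{2}+2\xi_3=\tfrac{339}{170}\approx1.994$. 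Concretely, with $j_2=2$ the image is $(10.2+\xi_3,\,12.3-\xi_3)$ and meets only $K^3_4$; thus $d_1=1$ forces $d_2=1$, the pairs $(d_1,d_2)\in\{(0,2),(1,0),(1,2),(2,0)\}$ are unreachable, and for instance the genuine period-$4$ type $\tau=2/5=6/15$ (which your arithmetic would obtain only via $4d_1+2d_2+d_3=5$) cannot be produced. The inequality the algorithm checks, $(B^{+^{m-1}}+\cdots+1)|J_{m-1}|>1.5-2\xi_m$, guarantees only that \emph{one} box is fully covered; your scheme would need the stronger bound $>1.5+2\xi_m$, which fails because $\xi_m\nearrow$ toward roughly $1/B^+$ as $m\to n-1$. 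Taking $b$ closer to $1$ does not help: all quantities involved are continuous in $b$ and the inequality is strict in the wrong direction at $b=1$.
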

\begin{proof}
Observe that $a \mapsto F^{n}_{a,b}(x)$ is increasing map. The proof follows now from Lemma \ref{for all types}.
\end{proof}

\section{Eyes}\label{eye}

We will now show that a non-single minus itinerary of the distinguished attracting periodic point results in an eye.

\begin{proof}[\textbf{Proof of Theorem \ref{eye combinatorics}}]
    By Theorem \ref{imlicit}, $H$ is open, connected. We will now use ideas from the proof of Theorem \ref{uniqueness of periodic attractor}. The anticlockwise direction is the positive direction. Because $\sigma$ has more than one minus, there is a left endpoint $z(=z(a,b))$  of the basin of the right most attracting periodic point $x_r\in I_-$ about which we can say the following.
    \begin{itemize}
        \item The endpoint $z$ is fixed point of $f_{a,b}^n$ for $(a,b) \in H$.
        \item The point $z$ is inside $I_-$ because there is already one immediate basin containing $1/2$.
        \item Since the boundary of basin components go to boundary under action of $f_{a,b}$, $z$ is  not preperiodic to 1/2. It is  because 1/2 is in the interior of the basin component by Theorem \ref{uniqueness of periodic attractor}. 
    \end{itemize}

      So $z$ is in fact a fixed point of $f_{a,b}^n$ which is repelling from the right.

    Now  for  $(a,b) \in H$  the slope of $f_{a,b}^n$ on $(z,x_r)$ is less than $B^-B^{+^{(n-1)}}$. 
     
    Since $B^-B^{+^{(n-1)}} \to 0$ as $b \to 1$, the point $z$ stops being fixed point of  $f_{a,b}^n$ that is repelling from the right, for $(a,b) \in H$ and $b$ close to 1. So the distance between $H$ and the ceiling of the parameter space $\mathbb{T}\times\{1\} \subset \mathcal P$ is strictly positive, which means $H$ is an eye. 
\end{proof}
\section{Case of  
  $(a,b) \notin \overline{\mathcal{H}}$}
We can also ask the question regarding the existence of the  wandering intervals in PLPDM family. Indeed, wandering intervals don't exist for any map in PLPDM family since they fall in the class $\mathcal{A}$, as defined in \cite{MMS}. 
Then one just needs to apply Theorem A$'$ from \cite{MMS} and conclude.
\begin{proposition}
    Assume $f \in$ PLPDM family. Then $f$ has no wandering interval.
\end{proposition}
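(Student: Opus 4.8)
The strategy is to verify that every map $f_{a,b}$ in the PLPDM family falls into the class $\mathcal{A}$ of maps treated in \cite{MMS}, and then invoke Theorem~A$'$ of that paper, which asserts the absence of wandering intervals for maps in $\mathcal{A}$. Recall that $\mathcal{A}$ consists of $C^0$ circle (or interval) maps that are piecewise monotone with finitely many turning/break points, such that on each lap the map is either $C^2$ with some mild non-flatness/distortion control, or is affine, and so that the map is non-flat at each critical point. Since $f_{a,b}$ has an explicit piecewise-linear formula — two laps $I_+$ and $I_-$ on which it is affine with slopes $B^+=2(1+b)$ and $B^-=2(1-b)$ respectively — essentially all the analytic hypotheses are automatic, so the proof reduces to checking the combinatorial/structural conditions.

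\textbf{Key steps.} First I would record that $f_{a,b}$ is continuous on $\mathbb{T}$ (the formula is continuous, including across the break points $0$ and $1/2$ once one accounts for the $\sslash 1$), is piecewise affine with exactly two laps, and has no flat critical points since it is affine on each lap — in particular there are no intervals on which it is locally constant. Second, I would note that $f_{a,b}$ is a degree-two covering-type map: it is monotone increasing on each of $I_+$ and $I_-$ (slopes are positive for $b\in[0,1)$), which places it squarely among the piecewise-monotone maps to which the machinery of \cite{MMS} applies; the two break points $0$ and $1/2$ play the role of the finitely many non-smoothness points allowed in class $\mathcal{A}$. Third, I would check the boundary behaviour and the orientation/turning-point bookkeeping required by \cite{MMS} — here it is convenient to pass to the interval model $I=[0,1]/{\sim}$ and observe $f_{a,b}$ has no genuine turning points (both laps are increasing), only break points, which is the most benign case in their classification. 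Having established membership in $\mathcal{A}$, the conclusion that $f_{a,b}$ has no wandering interval is exactly Theorem~A$'$ of \cite{MMS}.

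\textbf{Main obstacle.} The only substantive point is verifying the precise hypotheses defining the class $\mathcal{A}$ in \cite{MMS} — in particular whatever regularity/non-degeneracy conditions they impose at the break points (e.g.\ that the one-sided derivatives at $0$ and $1/2$ are nonzero and finite, which holds since the one-sided slopes are $B^\pm>0$) and any requirement on the Schwarzian or on non-flatness (vacuous for affine laps). There is a small subtlety worth a sentence: for $b=1$ one has $B^-=0$, so $f_{a,1}$ is constant on $I_-$ and the non-flatness hypothesis fails there; but the proposition concerns $f\in$ PLPDM with $b\in[0,1)$ (or one simply excludes that degenerate parameter), and in any case $b=1$ is already known to be outside $\overline{\mathcal{H}}$ discussion's scope. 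So the plan is: state membership in $\mathcal{A}$, justify each defining clause in one or two lines using the explicit affine-on-laps structure, and then cite Theorem~A$'$ of \cite{MMS} for the conclusion.
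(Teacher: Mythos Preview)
Your proposal is correct and follows exactly the same route as the paper: the paper's proof is nothing more than the observation that each $f_{a,b}$ lies in the class $\mathcal{A}$ of \cite{MMS}, followed by an appeal to Theorem~A$'$ there. Your write-up is in fact more detailed than the paper's (which is a single sentence), and your remark about the degenerate case $b=1$ is a worthwhile caveat that the paper does not mention.
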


\begin{claim}\label{inj semoconj}
      If $(a,b) \notin \overline{\mathcal{H}}$ and $\phi_{a,b}$ is constant on some interval $J$ then $\bigcup_{n\geq0} f_{a,b}^n(J)=\mathbb{T}$ and $\phi_{a,b}$ is constant on whole $\mathbb{T}$.
\end{claim}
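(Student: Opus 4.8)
The plan is to exploit the semiconjugacy $\phi_{a,b}\circ f_{a,b}=D\circ\phi_{a,b}$: if $\phi_{a,b}$ is constant equal to $c$ on $J$, then it is constant equal to $D^{n}(c)$ on each forward image $f_{a,b}^{n}(J)$, and each $f_{a,b}^{n}(J)$ is a nondegenerate interval. Hence, once we know $\bigcup_{n\ge 0}f_{a,b}^{n}(J)=\mathbb T$, the function $\phi_{a,b}$ is locally constant on the connected circle and therefore constant on all of $\mathbb T$; so the whole claim reduces to proving $\bigcup_{n\ge 0}f_{a,b}^{n}(J)=\mathbb T$. First I would replace $J$ by the maximal interval $\widehat J\supseteq J$ on which $\phi_{a,b}$ equals $c$ (a ``plateau''); distinct plateaus are disjoint, $f_{a,b}$ sends a plateau into a plateau, and it is enough to fill $\mathbb T$ with the orbit of $\widehat J$.

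If this fails, I would argue as follows. Since $(a,b)\notin\mathcal H$ there is no attracting cycle, so $\widehat J$ is not contained in the basin of a periodic attractor; since (by the preceding Proposition) $f_{a,b}$ has no wandering interval, the iterates $f_{a,b}^{n}(\widehat J)$ cannot be pairwise disjoint. Pick $i<j$ with $f_{a,b}^{i}(\widehat J)\cap f_{a,b}^{j}(\widehat J)\ne\emptyset$; both lie in plateaus, and two overlapping plateaus coincide, so they lie in a common plateau $Q$ whose value $c'=D^{i}(c)=D^{j}(c)$ is a $D$-periodic point of some exact period $p\mid j-i$. Because $D^{p}(c')=c'$, the connected set $f_{a,b}^{p}(Q)$ lies in the plateau of value $c'$, i.e. $f_{a,b}^{p}(Q)\subseteq Q$. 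The forward orbit of $\widehat J$ is then a finite transient together with the cyclically permuted plateaus $Q=Q^{(0)},Q^{(1)},\ldots,Q^{(p-1)}$, where $Q^{(k)}=\phi_{a,b}^{-1}(D^{k}c')$; so its failure to fill $\mathbb T$ is the same as $Q\ne\mathbb T$, and it remains to rule out $Q\subsetneq\mathbb T$.

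For this I would first observe that no $Q^{(k)}$ can contain a full branch $I_{+}$ or $I_{-}$, for then $f_{a,b}(Q^{(k)})=\mathbb T$ would force $Q^{(k+1)}=\mathbb T$ and hence $\phi_{a,b}$ constant, contradicting that $\phi_{a,b}$ is a degree-one surjection. So each $Q^{(k)}$ is a proper arc on which $f_{a,b}$ is injective, and consequently $g:=f_{a,b}^{p}|_{Q}\colon Q\to Q$ is a strictly increasing continuous self-map of the arc $Q$ (a composition of strictly increasing restrictions of $f_{a,b}$, which has strictly positive slopes on $I_{\pm}$ and is orientation preserving across the break points $0,\tfrac12$). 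Then I would run a sign-change argument: $g$ has a fixed point $x^{\ast}\in Q$, a periodic point of $f_{a,b}$, hence repelling because $(a,b)\notin\overline{\mathcal H}$; thus $g(x)-x$ changes sign at $x^{\ast}$. Following the sign of $g(x)-x$ from $x^{\ast}$ towards an endpoint of $Q$ and using $g(Q)\subseteq Q$, one produces either a fixed point of $g$ that is attracting from one side --- a non-repelling periodic orbit of $f_{a,b}$, contradicting $(a,b)\notin\overline{\mathcal H}$ --- or no room at all, meaning $Q$ is degenerate, contradicting that $f_{a,b}^{i}(\widehat J)\subseteq Q$ is a nondegenerate interval. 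Hence $Q=\mathbb T$, so $\bigcup_{n\ge 0}f_{a,b}^{n}(J)=\mathbb T$ and $\phi_{a,b}$ is constant on $\mathbb T$, as claimed. (In the application inside Theorem \ref{conjugate doubling}, ``$\phi_{a,b}$ constant on $\mathbb T$'' contradicts surjectivity, so in fact no plateau exists and $\phi_{a,b}$ is injective.)

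The hard part is the monotone-self-map step: I must be sure $g=f_{a,b}^{p}|_{Q}$ genuinely is an increasing homeomorphism of a proper arc --- that is, carry out the ``large plateau'' bookkeeping cleanly and check that $f_{a,b}$ does not reverse orientation across $0$ and $\tfrac12$ (where its slope jumps between $B^{+}$ and $B^{-}$ but stays positive) --- and then do the sign analysis near the two endpoints of $Q$ correctly, with ``repelling'' interpreted in the proper one-sided-slope sense for the piecewise-linear map. A secondary point to justify is that $(a,b)\notin\overline{\mathcal H}$ forbids \emph{every} non-repelling cycle, i.e. that a parameter carrying a neutral cycle lies in $\overline{\mathcal H}$; this should follow from Proposition \ref{break bif} together with the openness of $\mathcal H$.
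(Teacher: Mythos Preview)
Your overall strategy matches the paper's: use the absence of wandering intervals to produce a proper arc mapped into itself by some iterate, then exploit piecewise linearity to find an attracting periodic point, contradicting $(a,b)\notin\mathcal H$. The paper's execution is considerably shorter, however. Rather than introducing plateaus of $\phi_{a,b}$ and reading off a period $p$ from the $D$-orbit of the plateau value, the paper lets $U$ be a connected component of the forward-invariant set $\bigcup_{n\ge0}f_{a,b}^n(J)$. Then $f_{a,b}^k(U)\cap U\ne\emptyset$ for some $k$ (no wandering interval) forces $f_{a,b}^k(U)\subseteq U$ immediately, because $f_{a,b}^k(U)$ is a connected subset of $\bigcup_{n\ge0}f_{a,b}^n(J)$ meeting the component $U$. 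From there the paper splits into $f_{a,b}^k(U)\subsetneq U$ versus $f_{a,b}^k(U)=U$ and invokes piecewise linearity directly to locate an attracting fixed point. Your plateau $Q$ contains such a $U$, so the two routes describe the same trap; the component-of-the-orbit framing simply avoids all the plateau bookkeeping.

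Your injectivity step contains a concrete error. You assert that if $Q^{(k)}\supseteq I_-$ then $f_{a,b}(Q^{(k)})=\mathbb T$; but on $I_-$ the slope is $B^-=2(1-b)$, so $|f_{a,b}(I_-)|=1-b<1$ for $b>0$, and $I_-$ is \emph{not} a full branch of the degree-two covering. Moreover, ``$Q^{(k)}$ contains neither $I_+$ nor $I_-$'' does not by itself make $f_{a,b}|_{Q^{(k)}}$ injective: an arc of length close to $1$ straddling $\tfrac12$ can still have image wrapping around. The correct justification is simpler than what you wrote: since $f_{a,b}(Q^{(k)})\subseteq Q^{(k+1)}$ and we are assuming every $Q^{(j)}\subsetneq\mathbb T$ (otherwise we are already done), the lift $F_{a,b}$ sends any lift of $Q^{(k)}$ into an interval of length $<1$, hence $f_{a,b}|_{Q^{(k)}}$ is injective and increasing; then $g=f_{a,b}^p|_Q$ is an increasing piecewise-linear self-map of a compact arc and your sign-change argument goes through. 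Finally, note that the paper's proof only ever contradicts the existence of an \emph{attracting} cycle (i.e.\ $(a,b)\in\mathcal H$), so the separate question you raise about whether neutral cycles force $(a,b)\in\overline{\mathcal H}$ is not actually needed for this claim; Proposition~\ref{break bif} plays no role here.
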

\begin{proof}{(Compare with the proof of Theorem 5.1 of \cite{MMS})}
    Let $\bigcup_{n\geq0} f_{a,b}^n(J)$ be a proper subset of  $\mathbb{T}$.  Let $U$ be a connected component of $\bigcup_{n\geq0} f_{a,b}^n(J)$, then $U$ is also proper subset of $\mathbb{T}$. Since there are no wandering intervals, there is $k \in \mathbb{N}$ so that $U\cap f_{a,b}^k(U) \neq \emptyset$. This means $f_{a,b}^k(U) \subseteq U$. Say $f_{a,b}^k(U) \subset U$ is a proper subset and since $f^k_{a,b}$ is piecewise linear this means $f_{a,b}$ has an attracting periodic point in $U$, hence contradiction. So $f_{a,b}^k(U)=U$. Now since $f_{a,b}$ is increasing both boundary points are fixed. If one of them is attracting then we are done. If both of them are repelling then there is one attracting fixed point of $f_{a,b}^k$ in the interior of $U$ as $f_{a,b}$ is piecewise linear. This is also contradiction.\\
\indent    Now $\phi_{a,b}$ is constant on any component of $\bigcup_{n\geq0} f_{a,b}^n(J)$ since due to the semiconjugacy $\phi_{a,b}$ is constant on $f_{a,b}^n(J)$ for all $n \in \mathbb{N}$. This completes the second part of the proof.
\end{proof}

\begin{theorem}\label{conjugate doubling}
    If $(a,b) \notin \overline{\mathcal{H}}$ then $f_{a,b}$ is conjugate to the doubling map by $\phi_{a,b}$.
    
\end{theorem}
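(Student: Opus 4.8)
The plan is to show that the semiconjugacy $\phi_{a,b}$, which already satisfies $\phi_{a,b}\circ f_{a,b}=D\circ\phi_{a,b}$ and is continuous, increasing, and degree one by Lemma \ref{Semiconj}, is in fact a homeomorphism when $(a,b)\notin\overline{\mathcal H}$. A continuous increasing degree-one circle map is a homeomorphism precisely when it is injective, and the only way such a map can fail to be injective is by being constant on some nondegenerate interval $J\subset\mathbb{T}$ (its "flat spots"). So the whole theorem reduces to ruling out flat spots.

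First I would argue by contradiction: suppose $\phi_{a,b}$ is constant on some nondegenerate interval $J$. By Claim \ref{inj semoconj}, this forces $\bigcup_{n\ge 0}f_{a,b}^n(J)=\mathbb{T}$ and $\phi_{a,b}$ constant on all of $\mathbb{T}$. But $\phi_{a,b}$ has degree one, i.e. $\Phi_{a,b}(x+1)=\Phi_{a,b}(x)+1$, so it cannot be globally constant — an immediate contradiction. Hence $\phi_{a,b}$ has no flat spot, so it is injective, hence a continuous bijection of the compact Hausdorff space $\mathbb{T}$, hence a homeomorphism. Combined with the semiconjugacy relation, $\phi_{a,b}$ is then a topological conjugacy between $f_{a,b}$ and $D$.

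The key technical input is Claim \ref{inj semoconj}, whose proof in turn rests on two facts already established for the PLPDM family: the absence of wandering intervals (the Proposition just before the claim, via \cite{MMS}), which guarantees that the grand orbit of $J$ has a component $U$ with $f_{a,b}^k(U)\cap U\ne\emptyset$ for some $k$; and the hypothesis $(a,b)\notin\overline{\mathcal H}$ together with piecewise linearity of $f_{a,b}^k$, which rules out $f_{a,b}^k(U)\subsetneq U$ and rules out both boundary fixed points of $f_{a,b}^k|_{\overline U}$ being repelling (either scenario would manufacture an attracting periodic point, contradicting $(a,b)\notin\mathcal H$, and in fact contradicting $(a,b)\notin\overline{\mathcal H}$ since attracting cycles persist under perturbation by Proposition \ref{stable componenet open}). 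I would also note that $(a,b)\notin\overline{\mathcal H}$ rules out neutral cycles as well, so the boundary fixed points of $f_{a,b}^k|_{\overline U}$ cannot be one-sided neutral either; this is what lets us conclude $f_{a,b}^k(U)=U$ with the correct expanding/contracting behaviour at the ends.

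The main obstacle is not in the final assembly — which is essentially immediate once Claim \ref{inj semoconj} is in hand — but in being careful that the dichotomy "either a flat spot exists or $\phi_{a,b}$ is a homeomorphism" is stated correctly: one must verify that $\phi_{a,b}$ being non-constant, continuous, monotone, and degree one, with no interval of constancy, genuinely forces injectivity (a monotone continuous surjection $\mathbb{T}\to\mathbb{T}$ with no flat interval is injective). Beyond that, the delicate point inside Claim \ref{inj semoconj} is handling the case where $f_{a,b}^k$ fixes both endpoints of $\overline U$: here one invokes that a piecewise linear increasing map on an interval with both endpoints fixed and repelling must have an interior fixed point which is attracting, contradicting $(a,b)\notin\overline{\mathcal H}$. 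This is where the exclusion of the \emph{closure} of $\mathcal H$, rather than just $\mathcal H$ itself, does real work.
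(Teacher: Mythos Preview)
Your proposal is correct and follows essentially the same route as the paper's proof: invoke Claim \ref{inj semoconj} to show that a flat spot for $\phi_{a,b}$ would force $\phi_{a,b}$ to be constant on all of $\mathbb{T}$, contradict this with the degree-one property from Lemma \ref{Semiconj}, and conclude that the continuous monotone degree-one map $\phi_{a,b}$ is injective and hence a homeomorphism. The paper's version is simply a terser presentation of the same argument; your added commentary on the internal mechanics of Claim \ref{inj semoconj} and the role of $\overline{\mathcal H}$ versus $\mathcal H$ is extra exposition rather than a different strategy.
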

\begin{proof}
    Let $(a,b) \notin \overline{\mathcal{H}}$,  then the semiconjugacy $\phi_{a,b}$ is injective by Claim \ref{inj semoconj}. $\phi_{a,b}$ is non decreasing, continuous and degree 1 by Lemmas \ref{Semiconj},\ref{cont semiconjugacy}. So $\phi_{a,b}$ is actually a homeomorphism. 
\end{proof}

\section{Bifurcation}
The nature of bifurcations in PLPDM family is similar to that in the  Double Standard Family. We first state a lemma that we found interesting.

\begin{lemma}
Let $(a,b) \in \mathcal{P}$ where $b$ is a transcendental number. Then $f_{a,b}^n$ has no linear piece of slope $1$ for any $n \in \mathbb{N}$.
    
\end{lemma}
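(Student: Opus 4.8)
The key observation is that every linear piece of $f_{a,b}^n$ has slope of the form $B^{+^{\,n-m}}B^{-^{\,m}}$ for some $m$ with $0 \le m \le n$, where $B^+ = 2(1+b)$ and $B^- = 2(1-b)$. This is because along any maximal interval on which $f_{a,b}^n$ is linear, the orbit of the interval has a well-defined slope itinerary $\sigma_0,\ldots,\sigma_{n-1}$, and by the Remark following the definition of itinerary the slope of $f_{a,b}^n$ there is the product $\prod_{j} B^{\sigma_j} = B^{+^{\,n-m}}B^{-^{\,m}}$ where $m$ counts the minuses. So I would first reduce the statement to: for $b$ transcendental and any $n\in\mathbb N$, $0\le m\le n$, one has $B^{+^{\,n-m}}B^{-^{\,m}} \ne 1$, i.e.
\[
2^n (1+b)^{\,n-m}(1-b)^{\,m} \ne 1.
\]

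Next I would argue by contradiction. Suppose $2^n(1+b)^{\,n-m}(1-b)^{\,m}=1$ for some $n$ and some $0\le m\le n$. Clearing, this is a polynomial equation in $b$ with integer coefficients: $P(b) := 2^n(1+b)^{\,n-m}(1-b)^{\,m} - 1 = 0$. The polynomial $P$ is not identically zero — for instance, its leading term in $b$ is $\pm 2^n b^n \ne 0$ (here one uses $n\ge 1$ and notes that when $m=n$ the polynomial is $2^n(1-b)^n - 1$, still a genuine degree-$n$ polynomial with nonzero integer coefficients; and the constant term $2^n - 1 \neq 0$ rules out $b=0$ being forced, though that is not needed). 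Hence $P \in \mathbb{Z}[b] \setminus \{0\}$, so any root of $P$ is an algebraic number. Since $b$ is transcendental, $P(b)\ne 0$, a contradiction. Therefore no linear piece of $f_{a,b}^n$ can have slope exactly $1$.

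\textbf{Main obstacle.} The only genuine content is the reduction step — verifying that every maximal linear branch of $f_{a,b}^n$ really does have slope equal to a product of the two branch slopes $B^{\pm}$, which requires knowing that the points $0$ and $1/2$ (the only break points of $f_{a,b}$) partition the domain into at most finitely many intervals and that the chain rule applies branch-by-branch on each. This is essentially the same bookkeeping already used implicitly in the Remark on itineraries and in Lemma \ref{lift}, so it is routine, but it is where care is needed: one must allow $m$ to range over $0,\ldots,n$ (not assume at least one minus, since we are talking about arbitrary linear pieces, not attracting cycles) and observe that the endpoints of a linear branch may land on break points, which only subdivides branches further and does not create new slope values. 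Once that is in hand, the transcendence argument is immediate: a slope equal to $1$ would exhibit $b$ as a root of a nonzero integer polynomial, contradicting transcendence of $b$.
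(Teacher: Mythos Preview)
Your proposal is correct and follows exactly the same approach as the paper's proof: observe that every linear piece of $f_{a,b}^n$ has slope of the form $B^{+^{\,i}}B^{-^{\,j}}$, and note that equating this to $1$ yields a nonzero integer polynomial satisfied by $b$, contradicting transcendence. The paper's version is simply terser---it omits the explicit verification that the polynomial is nonzero and the discussion of the reduction step---but the argument is the same.
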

\begin{proof}
Observe that  $F_{a,b}^n$ is a piecewise linear map and slopes of any linear piece of $F^n_{a,b}$ for any $n$, is of the form $B^{+^{i}}B^{-^{j}}$. If $b$ is transcendental then this slope is never 1 as that would imply that $b$ is algebraic.
\end{proof}

We now state our main result of the section.
\begin{proposition}\label{break bif}
    The attracting periodic point in $I_-$ approaches $1/2$ as $(a,b)$ approaches the left boundary of the hyperbolic component from inside of the hyperbolic component and attracting periodic point in $I_-$ approaches $0$ as $(a,b)$ approaches the right boundary of the hyperbolic component from inside of the hyperbolic component. In particular, $0$ and $\frac{1}{2}$ become  neutral periodic points respectively at the right and  left boundary of any hyperbolic component. 
\end{proposition}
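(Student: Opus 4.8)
The plan is to reduce to a horizontal slice and track the attracting periodic point of $f_{a,b}$ in $I_-$ as $a$ varies, exploiting the fact that the cycle multiplier is independent of $a$ in order to pin down what forces the edge of the slice. Fix a hyperbolic component $H$ with periodic slope itinerary $\sigma$ of length $n$ and $m\ge 1$ minuses, and fix a height $b_\ast$ for which the slice $H\cap(\mathbb{T}\times\{b_\ast\})$ is nonempty; write this slice as a union of open arcs and let $(a_-,a_+)$ be one of them, so $(a_-,b_\ast)$ is a left boundary point. Let $x(a)=x(a,b_\ast)\in I_-$ be the attracting periodic point closest to $1/2$ inside $I_-$, as in Lemma~\ref{0.5 in closure}. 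By the implicit function argument of Claim~\ref{imlicit} (which uses only that $f_{a,b}^n$ has slope $<1$ at the attracting point), $a\mapsto x(a)$ is continuous on $(a_-,a_+)$; since $x(a)$ solves $F_{a,b_\ast}^n(x)-x=0$, whose $x$-derivative is the negative constant $B^{-^m}B^{+^{n-m}}-1$ and whose $a$-derivative is nonnegative (within a branch of the lift $F_{a,b}$ is a composition with $a$-derivative a positive sum of slopes, as in the proof that all types are realised), the map $x(a)$ is nondecreasing in $a$. Let $x_-=\lim_{a\searrow a_-}x(a)\ge 1/2$.

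Second, I would show the left edge must be a combinatorial bifurcation. The point $x_-$ is periodic for $f_{a_-,b_\ast}$ of period dividing $n$. If its whole orbit avoided $\{0,1/2\}$, the orbit would carry the itinerary $\sigma$ with multiplier $B^{-^m}B^{+^{n-m}}<1$, so by openness of $\mathcal{H}$ (Proposition~\ref{stable componenet open}) together with the persistence given by the implicit function theorem, $(a_-,b_\ast)$ would lie in the interior of $H$ --- contradicting that it is an endpoint of the arc. Note that the multiplier itself cannot be the source of the bifurcation along the slice, since there it equals the constant $B^{-^m}B^{+^{n-m}}$; hence some iterate of $x_-$ equals $0$ or $1/2$.

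The heart of the matter is to identify that break point as $1/2$ and that iterate as $x_-$ itself. Here I would use the sandwich of Lemmas~\ref{0.5 in closure} and~\ref{0,1 in basin}: for every $a\in(a_-,a_+)$ the break point $1/2$ lies in the interior of the immediate basin component $B_{x(a)}=(x_0(a),y_0(a))$, and by the proof of Lemma~\ref{0.5 in closure} the endpoint $x_0(a)$ lies on the far side of $1/2$ from $x(a)$, i.e. $1/2\in(x_0(a),x(a))$, with $x_0(a)$ a fixed point of $f_{a,b_\ast}^n$ repelling from the right and $f_{a,b_\ast}^n>\mathrm{id}$ on $(x_0(a),x(a))$. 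As $a$ decreases the graph of $F_{a,b_\ast}^n$ descends, so $x_0(a)$ moves right and $x(a)$ moves left toward one another; but they cannot collide, since a saddle--node collision would force slope $1$ at the merge point whereas the slope at $x(a)$ is $B^{-^m}B^{+^{n-m}}<1$. Hence the point $1/2$ trapped strictly between them must reach one of the two moving endpoints as $a\searrow a_-$. It cannot be $x_0(a)\to 1/2$ with the rest of the orbit staying away from $\{0,1/2\}$, for then $f_{a_-,b_\ast}$ would still possess the attracting cycle through $x_-$ with unchanged itinerary $\sigma$, again putting $(a_-,b_\ast)$ in the interior of $H$. So $x(a)\to 1/2$, i.e. $x_-=1/2$. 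Ruling out that some \emph{other} orbit point reaches a break point ``first'' is the step that needs real bookkeeping, and I expect it to be the main obstacle: a point $f_{a,b_\ast}^j(x(a))$ approaching $1/2$ from inside $I_-$ becomes (a tie for) the closest such point and so, after relabelling, \emph{is} the distinguished point; a point approaching $1/2$ from $I_+$ is an endpoint of a basin component and is handled by the identical sandwich applied to that component; and a point approaching $0$ is precisely the right-edge mechanism, transported by the orientation-reversing reflection $x\mapsto \tfrac12-x$, $a\mapsto \tfrac12-a$ of the symmetry lemma $f_{1/2-a,b}(1/2-x)=1/2-f_{a,b}(x)$, which interchanges $0$ and $1/2$ and thereby swaps the left-edge statement with the right-edge statement (the rightmost orbit point of $I_-$ playing the role of $x$). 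This same symmetry also delivers the right boundary claim directly from the left boundary claim.

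Finally, for the ``in particular'' clause: at $a=a_-$ the point $1/2=x_-$ is periodic for $f_{a_-,b_\ast}^n$, and being a break point $f_{a_-,b_\ast}^n$ is not differentiable there; its one-sided slope from the $I_-$ side is the limit of the multipliers $B^{-^m}B^{+^{n-m}}<1$, so $1/2$ is not repelling, and it is not attracting (an attracting cycle through a break point would once more force $(a_-,b_\ast)$ into the interior of a hyperbolic component, contradicting that the itinerary fails to be locally constant there); hence $1/2$ is a neutral periodic point in the sense of Definition~\ref{neutral cycle}. By the reflection symmetry the point $0$ is likewise neutral periodic at the right boundary.
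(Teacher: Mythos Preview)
Your plan uses exactly the ingredients the paper's two-line proof names --- openness of $\mathcal H$, the monotonicity of $a\mapsto f^i_{a,b}(x)$, and the basin geometry from Theorem~\ref{uniqueness of periodic attractor} --- and you correctly supply many details the paper leaves implicit. There is, however, a genuine gap at the step you yourself flag as the main obstacle. Because every orbit point of the attracting cycle moves \emph{clockwise} as $a$ decreases, an orbit point in $I_-$ can only collide with $1/2$, while an orbit point in $I_+$ can only collide with $0$; your case ``a point approaching $1/2$ from $I_+$'' therefore never arises. The case that must actually be excluded is: the leftmost orbit point $y_0\in I_+$ reaches $0$ while the leftmost orbit point $x_l\in I_-$ stays strictly above $1/2$. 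Your reflection $(a,x)\mapsto(\tfrac12-a,\tfrac12-x)$ does not dispose of this --- it carries this left-edge ``bad'' scenario to the analogous right-edge ``bad'' scenario (rightmost point of $I_+$ reaching $1/2$), so nothing is ruled out; the symmetry swaps the two \emph{good} scenarios with each other and the two \emph{bad} scenarios with each other.

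The missing idea is the asymmetry between the two break points. Since $(a_-,b_\ast)$ is a boundary point of a component of the open set $\mathcal H$, we have $(a_-,b_\ast)\notin\mathcal H$, so $f_{a_-,b_\ast}$ has no attracting cycle. If at $a_-$ the only orbit point of the limit cycle lying on a break point were $y_0=0$, the one-sided multipliers of $f^n_{a_-,b_\ast}$ at $0$ would be $B^{-^m}B^{+^{n-m}}$ (from the $I_+$ side) and $B^{-^{m+1}}B^{+^{n-m-1}}$ (from the $I_-$ side), both strictly $<1$; hence $0$ would be an attracting periodic point and $(a_-,b_\ast)\in\mathcal H$, a contradiction. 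Crossing $0$ trades a $+$ for a $-$ and can only \emph{lower} the multiplier; only crossing $1/2$ trades a $-$ for a $+$ and can push a one-sided multiplier up to or past $1$. This is what forces $x_l(a)\to 1/2$, and the same comparison gives the $I_+$-side multiplier at $1/2$ as $B^{-^{m-1}}B^{+^{n-m+1}}\ge 1$ while the $I_-$-side multiplier is $<1$, yielding neutrality directly --- without your appeal to ``the itinerary fails to be locally constant there'', which presupposes the conclusion.
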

\begin{proof}
    Note that $\mathcal{H}$ is open and $a \mapsto f^i_{a,b}(x)$ is increasing for any $i \in \mathbb{N}$. The rest of the proof is similar to the proof of  Theorem \ref{uniqueness of periodic attractor}.
\end{proof}

The following images are plotted in \textit{Desmos}. These demonstrate the shape of the graph of the fifth iteration when the parameter is in the boundary of a hyperbolic component of period $5$.

\begin{figure}[H]
    \centering
  %  \subfloat[\centering \small Left boundary ]
    {{\includegraphics[width=7cm]{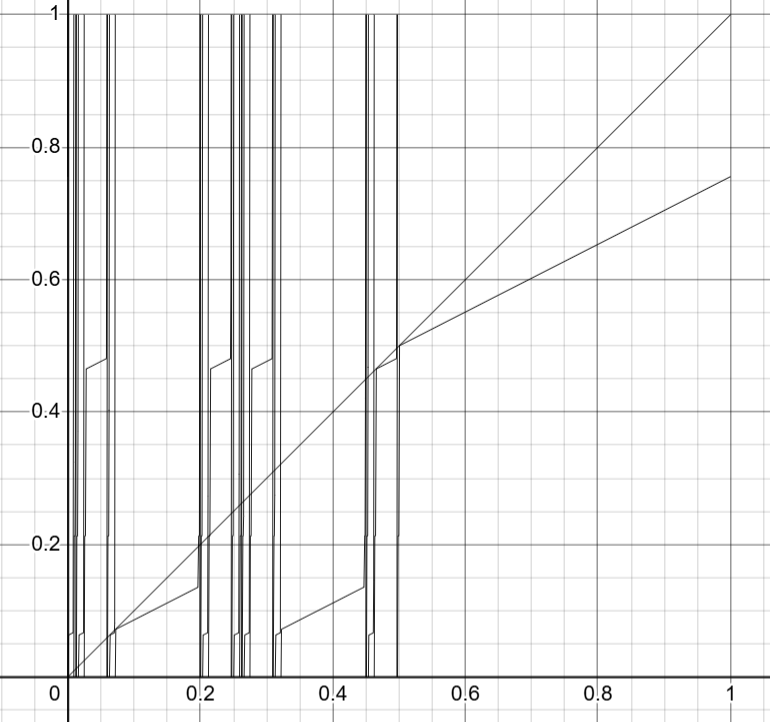} }}%
    \qquad
    %\subfloat[\centering \small{Right boundary}]
    {{\includegraphics[width=7cm]{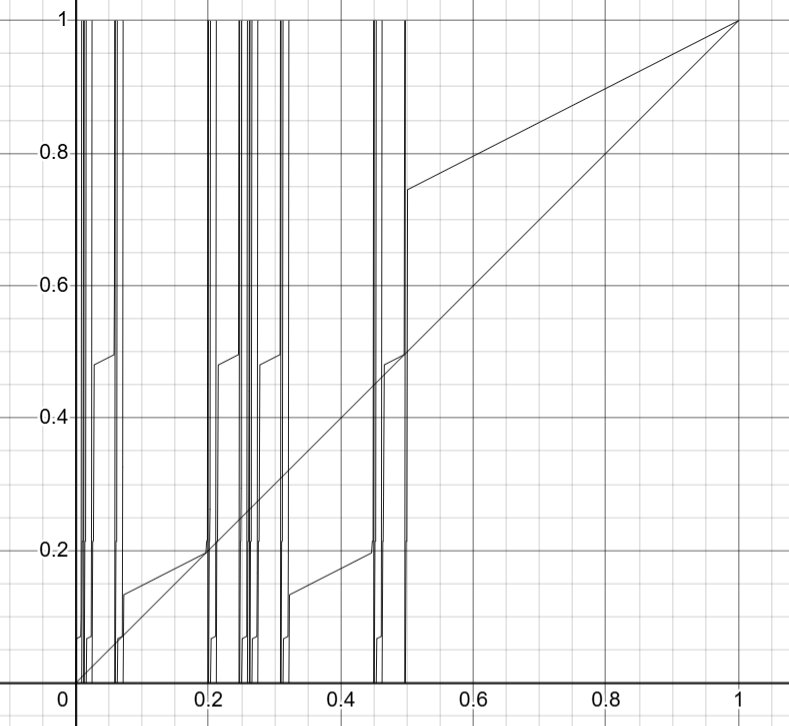} }}
    \caption{ \footnotesize At height $b=0.999$, the horizontal span of period $5$ tongue is from $a\sim0.712957676959782$ (left figure) to $a\sim0.71367603$ (right figure)}%
    \label{fig:example1}%
\end{figure}

\begin{figure}[H]
    \centering
 %   \subfloat[\centering \small Left boundary ]
 {{\includegraphics[width=7cm]{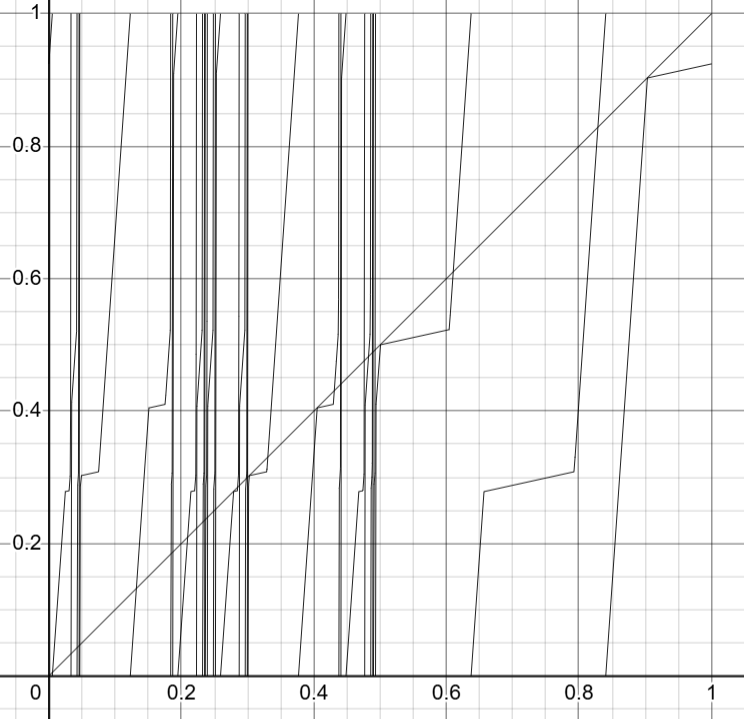} }}%
    \qquad
%    \subfloat[\centering \small Right boundary]
{{\includegraphics[width=7cm]{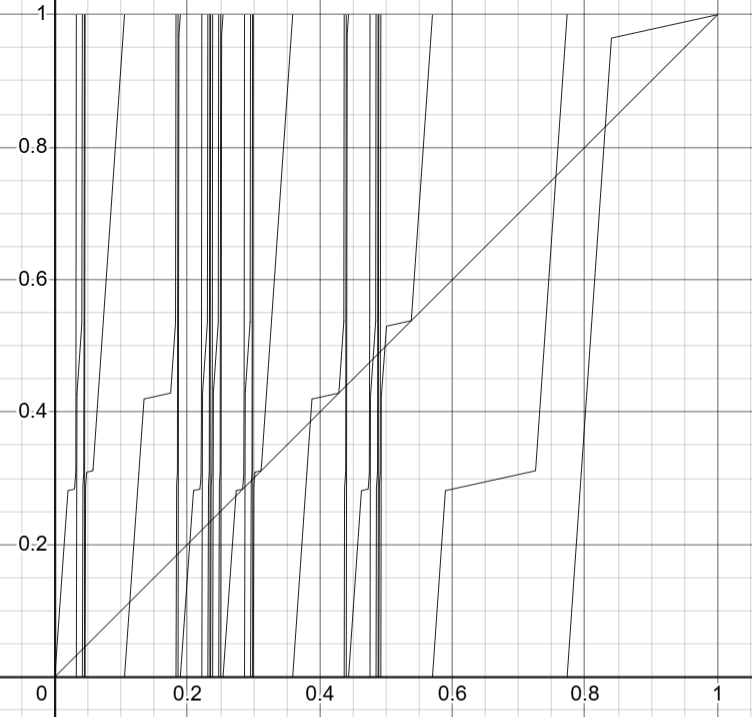} }}%
    \caption{ \footnotesize At height $b=0.96998$, the horizontal span of period $5$ eye is from $ a\sim0.79329$ (left figure) to $a\sim0.79631081199$ (right figure)}%
    \label{fig:example2}%
\end{figure}

Following figures show the behavior at boundary if we zoom in. In the left diagram of Figure \ref{fig:example zoom1}, we can see that  $1/2$ is a periodic point of period 5 at left boundary of a period 5 tongue. It is repelling from left and attracting from right. Because of the lack of resolution the line looks almost vertical in the left of 1/2. The right figure is zoom at 1/2 at the left boundary of a period 5 eye.

\begin{figure}[H]
    \centering
  %  \subfloat[\centering \small Left boundary ]
    {{\includegraphics[width=7cm]{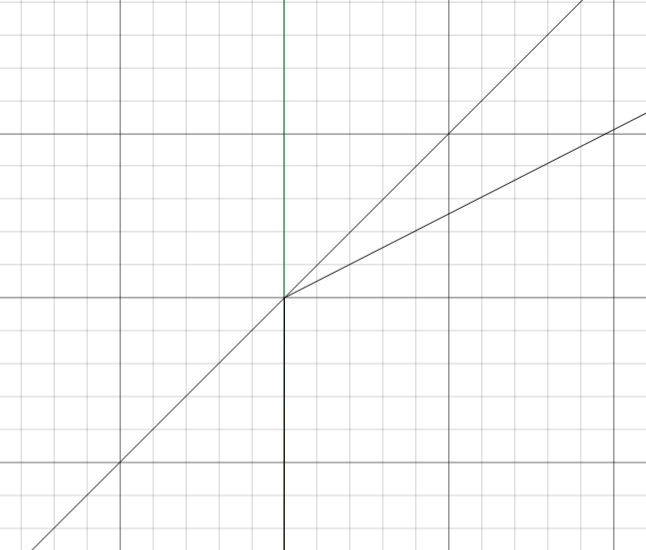} }}%
    \qquad
    %\subfloat[\centering \small{Right boundary}]
    {{\includegraphics[width=7cm]{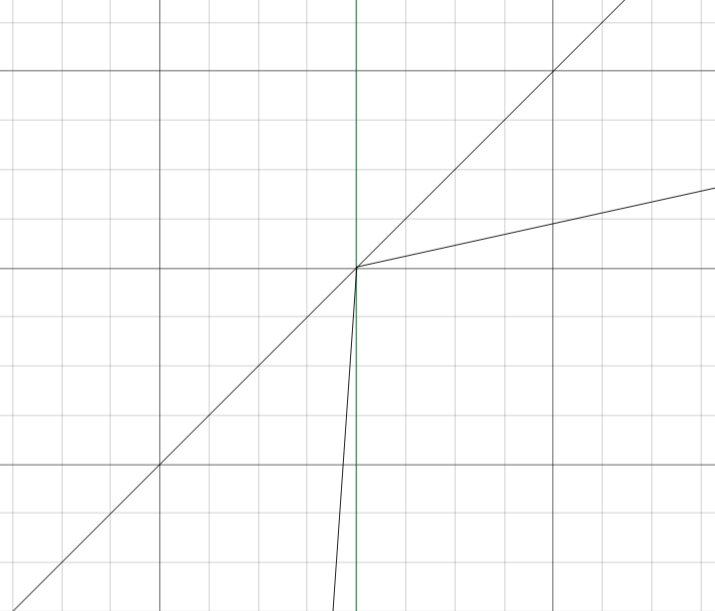} }}
    \caption{ \footnotesize Left figure shows 1/2 as a periodic point of period 5  at parameter  $a=0.712957676959782, b=0.999$ for left boundary of period 5 tongue. The right figure shows that 1/2 is a periodic point of period 5  at parameter   $ a=0.79329,  b=0.96998$ for left boundary of period 5 eye.}%
    \label{fig:example zoom1}%
\end{figure}

The next figure is a zoom in at the right boundary of a period 5 tongue.

\begin{figure}[H]
    \centering
  %  \subfloat[\centering \small Left boundary ]
    {{\includegraphics[width=7cm]{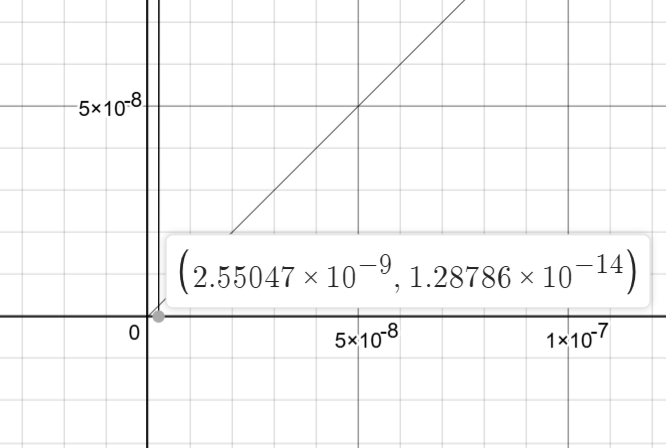} }}%
    \qquad
    %\subfloat[\centering \small{Right boundary}]
    {{\includegraphics[width=7cm]{ 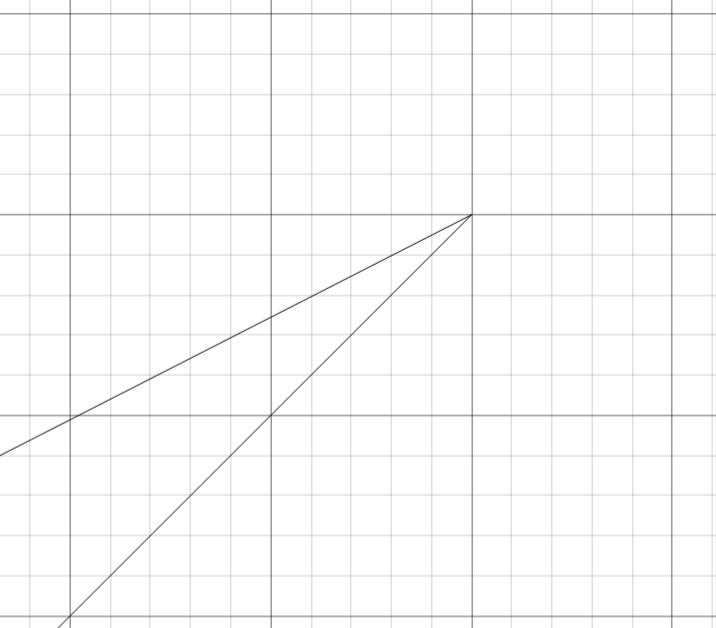} }}
    \caption{ \footnotesize This figure shows $0\sim1$ as a periodic point of period 5 at the right boundary of tongue at parameter  $a=0.71367603, b=0.999$. The left figure shows that $0\sim1$ is repelling from the right and the right figure shows that $0\sim1$ is attracting from left. Because of very high slope the point $(2.555047.10^{-9},1.28786 \times 10^{-14})$ is an approximation of $(0,0)$.} %
    \label{fig:example zoom2}%
\end{figure}

Following figure is a zoom in at the right boundary of a period 5 eye.

\begin{figure}[H]
    \centering
  %  \subfloat[\centering \small Left boundary ]
    {{\includegraphics[width=7cm]{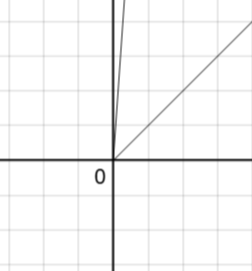} }}%
    \qquad
    %\subfloat[\centering \small{Right boundary}]
    {{\includegraphics[width=7cm]{ 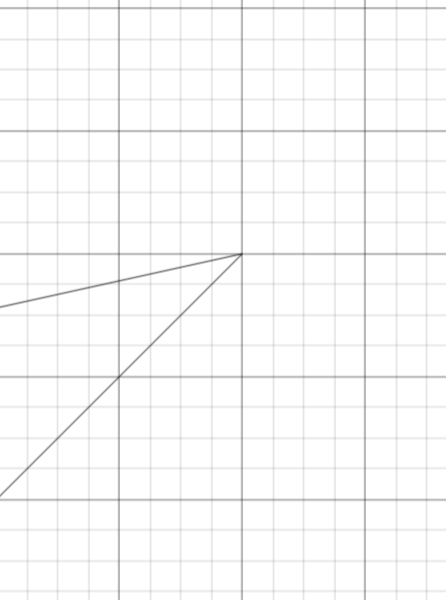} }}
    \caption{ \footnotesize This figure shows $0\sim1$ as a periodic point of period 5 at the right boundary of eye at parameter  $ b=0.96998$. The left figure shows that $0\sim1$ is repelling from the right and the right figure shows that $0\sim1$ is attracting from left.}
    \label{fig:example zoom2}%
\end{figure}

Next picture illustrates that for parameter $a = 0.5760, b = 0.7913$ there is an attracting  cycle  of period 5 with 3 points of the attracting cycle in $I_-$.

\begin{figure}[H]
    \centering
    \includegraphics[scale=.5]{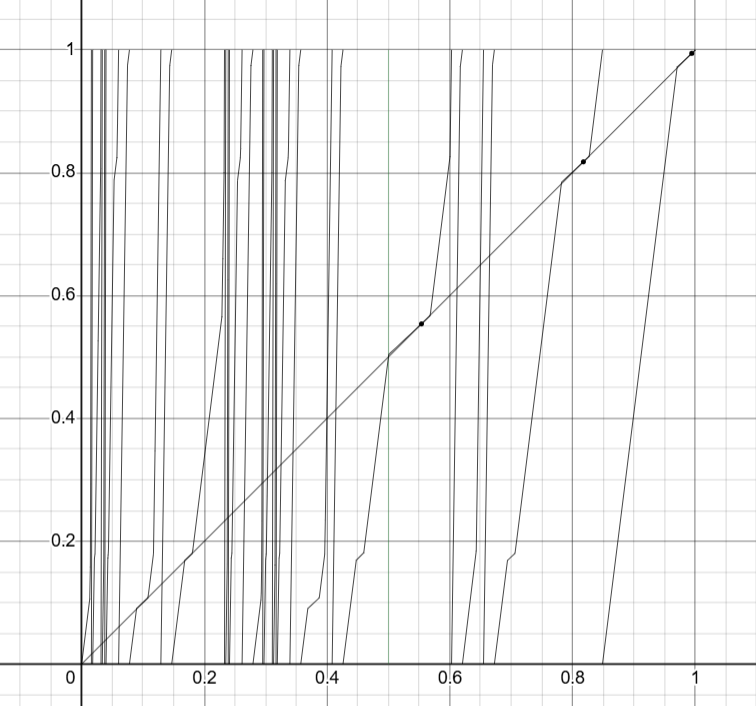}
    \caption{\small Parameter with period 5 attracting cycle with 3 points in attracting cycle in $I_-$.  }
    
\end{figure}
 Here is picture of parameters with period 5 attracting cycle and 3 points in attracting cycle in $I_-$.
 \begin{figure}[H]
    \centering
    \includegraphics[scale=.3]{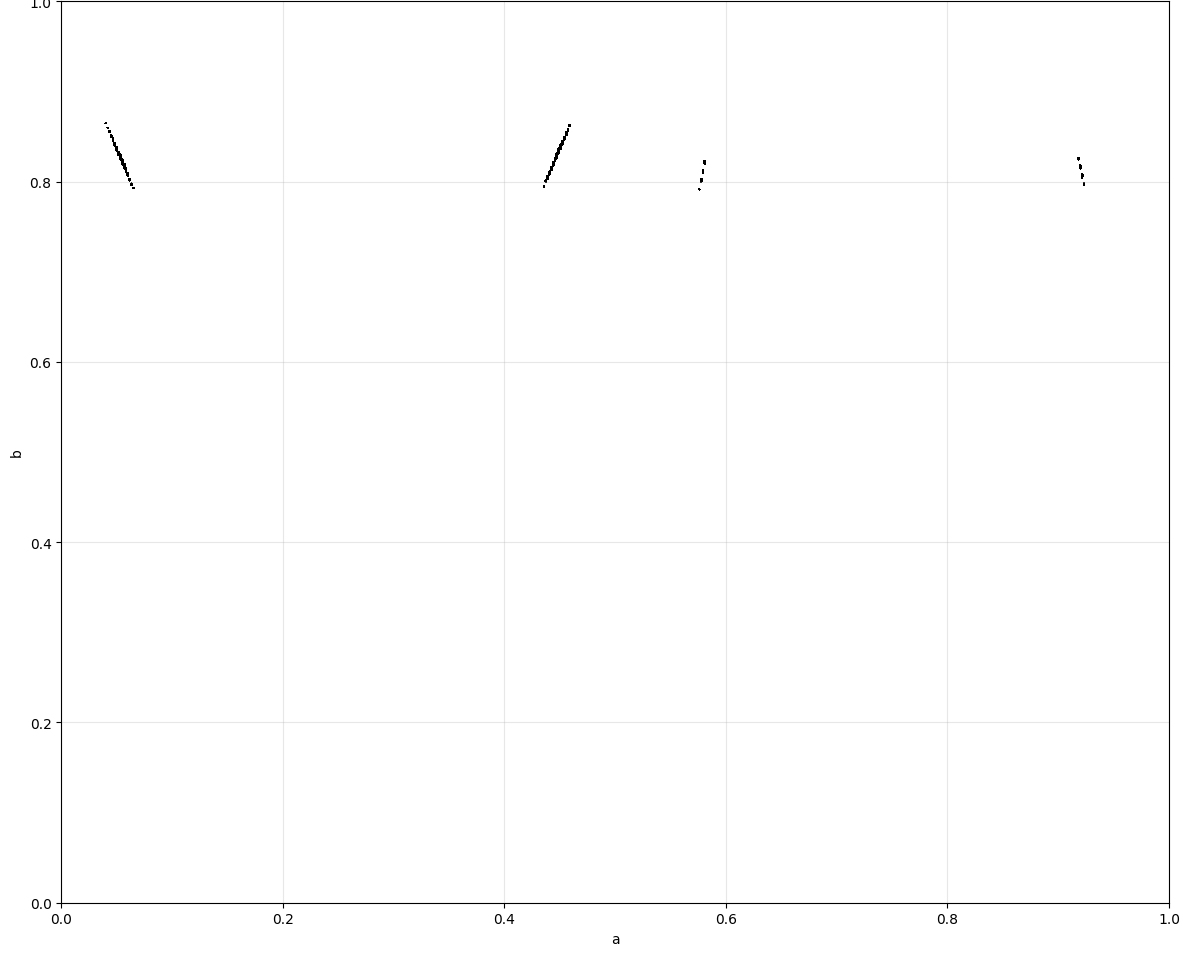}
    \caption{\small parameter with period 5 attracting cycle with 3 points in attracting cycle in $I_-$  }
\end{figure}

\section{Concluding Remarks}
Our findings provide a basic picture of the parameter space of the PLPDM family. Lemma \ref{no only minus} shows that there is no attracting cycle with an itinerary consisting only of $-$.  Our experiments indicate that any other itinerary should be realized by some parameter. We pose it as a question here. 
\begin{qn}
    For $ n \geq 4$,  let $\sigma=(\sigma_0,...,\sigma_{n-1}) $ with $\sigma_i \in \{+,-\}$ be any itinerary with at least one $j \in \{0,...,n-1\}$  such that $\sigma_j=+$. Then does there exist $(a,b) \in \mathcal{H}$ so that $f_{a,b}$ has an attracting cycle with itinerary $\sigma$?
\end{qn}
We believe the answer to this question to be affirmative. This will tell us that all \textit{admissible `post critical' combinatorics} is realized, a situation similar to case of quadratic polynomials.   
\vspace{.1cm}

In \cite{BBM} authors provide a dynamically natural uniformization of the tongues leading to the fact that the Hausdorff dimension of the maximal chaotic set varies real analytically inside the tongues in the DSM family. The maximal set of Devaney chaos is the  complement of the basin of attraction in case of the DSM family. For the PLPDM family, we can similarly define, $C_{a,b}= \mathbb{R/Z}\setminus \mathcal{B}_{a,b}$ where $\mathcal{B}_{a,b}$ is the basin of attraction for $(a,b) \in \mathcal{H}$.
\begin{lemma}
    The set $C_{a,b} \subset \mathbb{R/Z}$ has Lebesgue measure 0, for $(a,b) \in \mathcal{H}$. 
\end{lemma}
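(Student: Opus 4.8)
The plan is to show that $C_{a,b}$ is a forward-invariant set on which $f_{a,b}$ is uniformly expanding after finitely many iterates, and then deduce the measure claim from a standard expansion/distortion argument for piecewise linear maps. First I would observe that $C_{a,b} = \mathbb{T} \setminus \mathcal{B}_{a,b}$ is closed, $f_{a,b}$-invariant (both forward and backward, since the basin is fully invariant), and contains no point of the immediate basin; in particular, by Lemma \ref{0,1 in basin}, $C_{a,b}$ avoids a neighbourhood of $0$ and of $1/2$, so there is $\eta>0$ with $C_{a,b} \subset [\eta, 1/2-\eta] \cup [1/2+\eta, 1-\eta]$ (using the circle identification). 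The key point is that $C_{a,b}$ cannot spend too long in the contracting lap $I_-$: if a point $x \in C_{a,b}$ had a forward orbit segment lying in $I_-$ for $N$ consecutive steps with $N$ large, then on that piece $f_{a,b}^N$ has slope $(B^-)^N = 2^N(1-b)^N < 1$, and combined with the fact that $x$ is bounded away from $1/2$ and $0$ this forces $f_{a,b}$ to have a fixed point, hence an attracting fixed point, in $I_-$ (as in the proof of Lemma \ref{no only minus}), contradicting $x \notin \mathcal{B}_{a,b}$; more carefully, there is a uniform bound $N = N(a,b)$ on the length of any run of consecutive $-$'s in the itinerary of a point of $C_{a,b}$.

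Given such a bound $N$, I would choose $m$ large enough that along any orbit segment of length $m$ the number $n_+$ of $+$-steps and $n_-$ of $-$-steps satisfy $(B^+)^{n_+}(B^-)^{n_-} > \lambda > 1$; this is possible because in any window of $m$ steps at least a fixed proportion are $+$-steps (no run of $-$'s exceeds $N$), and $B^+ = 2(1+b)$ is bounded below by $2$ while $B^- = 2(1-b)$ — if $b$ is close to $1$ the estimate is immediate, and for $b$ bounded away from $1$ one just needs the proportion of $+$ steps large enough, which follows by taking $m \gg N$. Thus $f_{a,b}^m$ is uniformly expanding (with constant $\lambda$) on $C_{a,b}$, in the sense that every branch of $f_{a,b}^m$ restricted to a neighbourhood of a point of $C_{a,b}$ is linear with slope $\geq \lambda$. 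Then the standard argument applies: if $C_{a,b}$ had positive Lebesgue measure, pick a density point $x$; cover $C_{a,b}$ by the finitely many linear branch-domains of $f_{a,b}^m$, pull back, and iterate to produce arbitrarily small intervals around $x$ in which $C_{a,b}$ has density bounded away from $1$ (the complement $\mathcal{B}_{a,b}$, being open and forward-invariant and nonempty, occupies a definite fraction of some image branch and is spread back by the linear — hence distortion-free — inverse branches), contradicting the density point property. Hence $|C_{a,b}| = 0$.

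The main obstacle is making the uniform-expansion step honest, i.e. proving the uniform bound $N(a,b)$ on runs of consecutive $-$'s for points in $C_{a,b}$ and then converting this into the expansion constant $\lambda > 1$ for $f_{a,b}^m$; this is where the hypothesis $(a,b)\in\mathcal{H}$ and the structure established in Lemmas \ref{0.5 in closure}, \ref{0,1 in basin}, \ref{no only minus} really get used, since one must rule out, for a point of $C_{a,b}$, the three monotone scenarios of Lemma \ref{no only minus} accumulating on the break points without ever entering the basin. A secondary subtlety is that $f_{a,b}^m$ is only piecewise linear, so one works branch by branch and uses that each inverse branch is an affine contraction (so there is literally zero distortion), which actually makes the covering argument cleaner than in the smooth case; alternatively one could invoke that $f_{a,b}$ lies in the class $\mathcal{A}$ of \cite{MMS} together with the absence of wandering intervals and a Mañé-type hyperbolicity statement, but the direct piecewise-linear argument above is self-contained.
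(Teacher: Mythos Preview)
Your overall architecture is sound once uniform expansion on $C_{a,b}$ is in hand: the set is compact, completely invariant, disjoint from the break points by Lemma~\ref{0,1 in basin}, and then a Lebesgue-density argument with affine (hence distortion-free) inverse branches would finish cleanly. The paper takes a shorter route: it uses $0,\tfrac12\notin C_{a,b}$ to extend $f_{a,b}|_{C_{a,b}}$ to a $C^2$ circle map, declares $C_{a,b}$ a hyperbolic invariant set for this extension, and then invokes Theorem~2.6 of Chapter~III of \cite{DS} to obtain the measure-$0$/measure-$1$ dichotomy, concluding since $\mathcal B_{a,b}$ is open and nonempty.

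The genuine gap is your derivation of uniform expansion. A bound $N$ on consecutive $-$'s only guarantees that any length-$m$ itinerary block contains at least about $m/(N+1)$ plus-symbols, so the slope of $f_{a,b}^m$ along such a block is, in the worst case, of order $\bigl[B^+(B^-)^N\bigr]^{m/(N+1)}$, where $B^+(B^-)^N=2^{N+1}(1+b)(1-b)^N$; there is no reason this exceeds $1$. Both of your cases go the wrong way: for $b$ near $1$ one has $B^-\to 0$, so a single $-$ kills arbitrarily many $+$'s and the estimate is \emph{hardest} there, not ``immediate''; and taking $m\gg N$ does nothing to the \emph{proportion} of $+$-steps, which stays near $1/(N{+}1)$. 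Run-length combinatorics alone therefore cannot manufacture $\lambda>1$. What is actually needed is exactly the Ma\~n\'e-type hyperbolicity you relegate to an aside at the end: since $C_{a,b}$ misses the break points and carries no attracting periodic orbit, it is a hyperbolic set for a $C^2$ extension of $f_{a,b}$, i.e.\ some iterate is uniformly expanding on it. That is the step the paper packages into its citation of \cite{DS}; without it, neither your density-point argument nor the dichotomy can be applied.
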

\begin{proof}
    Observe that $C_{a,b}$
is completely invariant and compact with the points of non differentiability  $0,1/2 \notin C_{a,b}$. So we can then extend $f_{a,b}|_{C_{a,b}}$ to a $C^2$ map, $\tilde{f}_{a,b}: \mathbb{R/Z} \to \mathbb{R/Z}$ so that $|\tilde{f}_{a,b}'(x)|>\lambda>1$ for $x \in C_{a,b}$. It is clear that $C_{a,b}$ is a hyperbolic, invariant set of $\tilde{f}_{a,b}$. Hence, it follows from  theorem 2.6  of chapter III from \cite{DS}, that  $C_{a,b}$ has either Lebesgue measure 0 or 1. Since $\mathcal{B}_{a,b}$ is open it has  positive Lebesgue measure and the result follows.
\end{proof}
We now ask the following question; $\dim_{H}$ will denote the Hausdorff dimension. 
\begin{qn}
    Is it possible to give a dynamically natural uniformization of a component of $\mathcal{H}$ (this will also establish simple connectedness)? Is $(a,b) \mapsto \dim_{H}(C_{a,b})$ a real analytic map, restricted to a component of $\mathcal{H}$? If not, then what can be said about the regularity of this map?
\end{qn}

%\end{figure}

\noindent {\bf Acknowledgment:} The research of first author was funded by UGC [NTA Ref. No. 201610319430], Govt. of India. The
second author was supported partly by the Department of Science and Technology (DST), Govt. of India,
under the Scheme DST FIST [File No. SR/FST/MS-I/2019/41] and NBHM travel grant 0207/15(3)/2023/R\&D-II/11964. The authors wish to thank Alexandre Dezotti for various inputs in framing the main research questions and the initial computer experiments. In fact the second author started this project with Dezotti in summer 2019 during his visit to the Imperial College London. The second author wishes to thank Davoud Cheraghi for arranging the visit and the hospitality at the Imperial College London. The authors wish to thank Xavier Buff who suggested to look at the orbits of the break points for showing that there are at most one attracting cycle possible for each map in this family.  The authors are also thankful to Subhamoy Mondal for many helpful discussions.

\end{document}